\definecolor{amethyst}{rgb}{0.6, 0.4, 0.8}
\definecolor{orange}{rgb}{1,0.5,0}
\newtheorem{Theorem}{Theorem}[section]
\newtheorem{Lemma}{Lemma}[section]
\newtheorem{Proposition}{Proposition}[section]
\newtheorem{Remark}{Remark}[section]
\newtheorem{example}{Example}[section]
\newtheorem{Definition}{Definition}[section]
\newtheorem{Corollary}{Corollary}[section]
\newproof{pot}{Proof of Theorem \ref{thm2}}
\newcommand{\bb}{\begin{bmatrix}}
\newcommand{\eb}{\end{bmatrix}}
\newcommand{\bl}[1]{\begin{list}{#1}{\usecounter{bean}}} \newcommand{\el}{\end{list}}
\newcommand{\bel}[1]{\begin{equation} \label{#1}} \newcommand{\eel}{\end{equation}}
\def\r2n2n{\mathbb{R}^{2n\times 2n}}
\def\c2n2n{\mathbb{C}^{2n\times 2n}}
\begin{document}

\date{}
\begin{frontmatter}
\title{On the existence of extremal solutions for the conjugate discrete-time Riccati equation}
%\tnoteref{t1}}%,t2}}
%

%\author{Hung-Yuan Fan \corref{cor1}}%\fnref{fn1}}
%\ead{hyfan@ntnu.edu.tw}
%\address{Department of Mathematics, National Taiwan Normal University, Taipei 116325, Taiwan.}
\author{Chun-Yueh Chiang\corref{cor2}}%\fnref{fn2}}
\ead{chiang@nfu.edu.tw}
\address{Center for Fundamental Sciences, National Formosa
University, Huwei 632, Taiwan.}

\cortext[cor2]{Corresponding author}
%\fntext[fn1]{The first
%author was supported by the Ministry of Science and Technology of Taiwan under grant 110-2115-M-003-016.}
%\fntext[fn2]{ The second author was supported by the Ministry of Science and Technology of Taiwan under grant 109-2115-M-150-003MY2.}

\date{ }

\begin{abstract}
In this paper we consider a class of conjugate discrete-time Riccati equations (CDARE), arising originally from the linear quadratic regulation problem for discrete-time antilinear systems. Recently, we have proved the existence of
the maximal solution to the CDARE with a nonsingular control weighting matrix under the framework of the constructive method.
Our contribution in the work is to finding another meaningful Hermitian solutions, which has received little attention in this topic.
Moreover, we show that some extremal solutions cannot be attained at the same time, and almost (anti-)stabilizing solutions coincide with some extremal solutions.
%the fixed-point iteration generated by an auxiliary equation converges at least linearly to another extremal solutions of CDARE.
It is to be expected that our theoretical results presented in this paper will
play an important role in the optimal control problems for discrete-time antilinear systems.
\end{abstract}

\begin{keyword}
algebraic anti-Riccati equation, antilinear system, complex-valued linear systems, extremal solutions, almost (anti-)stabilizing solutions, fixed-point iteration, LQR control problem,
\MSC 39B12\sep39B42\sep93A99\sep65H05\sep15A24
 \end{keyword}

\end{frontmatter}

\section{Introduction} \label{sec1}
%
%The developments on the existence for the specific solution of  conjugate discrete-time Riccati equations (CDARE) are more recent but extremely successful events~\cite{MR4487984,WOS:000451416400005,MR4086472}.
%In this paper we are interest in the study of the existence of some extreme solutions to
A conjugate
discrete-time algebraic Riccati equation (CDARE) is the matrix equation
\begin{subequations}\label{cdare}
\begin{align}
  X = \mathcal{R}(X):= A^H \overline{X}A - A^H \overline{X}B R_X^{-1} B^H \overline{X}A + H, \label{cdare-a}
     % &= A^H X(I + GX)^{-1}A + H, \label{dare-b}
\end{align}
{\color{black} or its equivalent expression}
 \begin{align}
 X = A^H \overline{X} (I+ G\overline{X})^{-1} A + H, \label{cdare-b}
\end{align}
\end{subequations}
where $A\in \mathbb{C}^{n\times n}$, $B\in \mathbb{C}^{n\times m}$, $R\in \mathbb{H}_m$ is nonsingular,
$H=CC^H\in \mathbb{H}_n$ with $C\in \mathbb{C}^{n\times n}$, $I$ is the identity matrix of compatible size,
$G:= BR^{-1}B^H$, $R_X:=R+B^H \overline{X}B$ and the unknown $X\in\mathrm{dom}(\mathcal{R})$, respectively. Here, $\mathbb{H}_\ell$ denotes the set of all $\ell \times \ell$ Hermitian matrices and $\mathrm{dom}(\mathcal{R}) := \{X\in \mathbb{H}_n\, |\, \det(R_X)\neq 0\}$. We also notice that
$R_X$ is nonsingular if and only if $\det(I+G\overline{X})\neq 0$.

A class of CDAREs \eqref{cdare} arises from the linear quadratic regulation (LQR) optimal control problem for the discrete-time antilinear system
of the state space representation
\begin{equation} \label{antils}
  x_{k+1} = A\overline{x}_k + B\overline{u}_k,\quad k\geq 0,
\end{equation}
where $x_k\in \mathbb{C}^n$ is the state response and $u_k\in \mathbb{C}^m$ is the control input.
 The main goal of this control problem is to find a state feedback control $u_k = -Fx_k$ such that the performance index
 \[
 \mathcal{J}(u_k, x_0) := \sum_{k=0}^\infty \bb x_k^H & u_k^H\eb
 \bb H & \\ & R\eb\bb x_k \\ u_k\eb
 \]
 %( x_k^H H x_k + u_k^H R u_k)  \]
is minimized with $H\geq 0$ and $R > 0$. If the antilinear system \eqref{antils} is controllable~\cite[Theorem 12.7]{w.z17} or the matrix pair $(A\overline{A},\bb B & A\overline{B}\eb)$ is stabilizable~\cite[Theorem 2]{MR4086472}, the optimal state feedback controller is $u_k^* := -R_{X_*}^{-1}B^H\mathbf{\overline{Z}_M}A x_k$ for $k\geq 0$ and thus
the minimum value of $\mathcal{J}(u_k^*, x_0) = x_0^H \mathbf{Z_M} x_0$ is achieved,
where $\mathbf{Z_M}:=\max(\mathbb{R}_=\cap\mathbb{N}_n)$ is the unique positive solution of the CDARE \eqref{cdare-a}. Here, the notation $\mathbb{N}_n$ collects all positive semidefinite matrices with size $n$ and all Hermitian solution of CDARE~\eqref{cdare} is denoted by $\mathbb{R}_=$.
% A class of algebraic Riccati-type matrix equation~\eqref{cdare} is also called {\it algebraic anti-Riccati equation} arising from the linear quadratic regulation (LQR) optimal control problem for the discrete-time antilinear system~\cite{Wu.16,WOS:000451416400005}.
 The positive definite solution of CDARE~\eqref{cdare} has been an active area of research and CDARE~\eqref{cdare} is also called the {\em discrete-time algebraic anti-Riccati equation} \cite{Wu.16,w.z17}. For more details see~\cite{w.z17,MR4086472}.

In the past few years, the authors~\cite{CHIANG202471,MR4487984} discovered a way to construct the existence of the maximal solution $\mathbf{X_M}:=\max(\mathbb{R}_=\cap\mathbb{P})$ for the CDARE~\eqref{cdare} with $G,H\in\mathbb{H}_n$. Here, the notation $\mathbb{P}$ collects any Hermitian matrix $X$ such that $R_X>0$. More precisely,
a class of the fixed-point iteration (FPI) has been utilized to compute $\mathbf{X_M}$ under the weaker assumptions. Further exploitation in the convergence result of the FPI appeared in recent work~\cite{MR4487984} or see the below Theorem~\ref{thm3p1}. It is clear that $\mathbf{X_M}\geq\mathbf{Z_M}$ when $G,H\geq 0$ since $\mathbb{N}_n\subseteq\mathbb{P}$. However, $\mathbf{X_M}-\mathbf{Z_M}$ may be indefinite in general if they exist. One aim of this work is to establish some appropriate conditions for the existence and relation to all meaningful solutions including $\mathbf{X_M}$ and $\mathbf{Z_M}$.
%Roughly speaking,

Based on these recent works, it is natural to ask whether another meaningful extremal solution $\mathbf{X_m}:=\max(\mathbb{R}_=\cap\mathbb{P})$, $\mathbf{Y_M}:=\max(\mathbb{R}_=\cap\mathbb{E})$ and $\mathbf{Y_m}:=\min(\mathbb{R}_=\cap\mathbb{E})$ exist. Here, the notation $\mathbb{E}$ collects any Hermitian matrix $X$ such that $R_X<0$.
In analogy with some particular Hermitian solutions of discrete-time algebraic Riccati equation(DARE)~\cite{MR4794239,CHIANG202471}, another interesting issue including almost stabilizing, almost anti-stabilizing, positive semidefinite and negative semidefinite solutions will also be defined and investigated. For the sake of explanation, the maximal element of $\mathbb{R}_{=}\cap\mathbb{N}_n$, the minimal element of $\mathbb{R}_{=}\cap\mathbb{N}_n$, the maximal element of $\mathbb{R}_{=}\cap -\mathbb{N}_n$, the minimal element of $\mathbb{R}_{=}\cap-\mathbb{N}_n$, the maximal element of $\mathbb{R}_{=}\cap\mathbb{E}$ and the minimal element of $\mathbb{R}_{=}\cap\mathbb{E}$ are denoted by $\mathbf{\mathbf{Z_M}}$, $\mathbf{Z_m}$, $\mathbf{\mathbf{W_M}}$, $\mathbf{W_m}$, $\mathbf{X_{a.s.}}$ and $\mathbf{X_{a.a.s.}}$, respectively, if they exist. For more details about the specific notations in the paper, see Tables~\ref{Fx5} and \ref{Fx6}.
% The relation between maximal, stabilizing, and unique (or not) positive semidefinite solution is
%never discussed,

As expected, we shall provide a constructive mathod to guarantee the existence of all extremal and almost (anti-)stabilizing solutions under reasonable assumptions, which generalizes the previously works. Two kinds of fixed-point iterations are proposed for finding all extremal solutions via the
two auxiliary algebraic Riccati equations.
%In this work the concept of the maximal solution to discrete-time algebraic Riccati equations (DAREs)
%presented in Theorem 13.1.1 of [9] will be extended for the CDARE (1) with $G,H$ , and hopefully, it
%would play an important role in the LQR control problem for discrete-time antilinear systems.
%In this paper, we are interest in the study of four extremal Hermitian solutions of CDARE~\eqref{cdare}
%under the constraint $R_X>0$ or $R_X<0$ and its solvable condition.
 %For this reason, the definitions are given on the maximal/minimal solutions of CDARE~\eqref{cdare} with respect to a constrict.
Precisely, starting with $X_0\in \mathbb{H}_n$,
the sequence $\{X_k\}_{k=0}^\infty$ generated by the following FPI of the form
\begin{equation} \label{fpi}
X_{k+1} = \mathcal{R}(X_k), \quad k\geq 0,
\end{equation}
will be used in the following content of the paper.
With the help of the concept of FPI~\eqref{fpi},
it is shown in Theorem 3.1 of \cite{MR4487984} that the existence of the maximal element of $\mathbb{R}_\leq \cap \mathbb{P}$ to the CDARE \eqref{cdare} can be constructed iteratively by FPI~\eqref{fpi}, which is exactly equal to $\mathbf{X_M}=\max(\mathbb{R}_=\cap\mathbb{P})$.
Analogous to Theorem 3.1 of \cite{MR4487984}, one can easily obtain the minimal element of $\mathbb{R}_\geq \cap \mathbb{E}$ to the CDARE \eqref{cdare-a} based on the framework of the FPI~\eqref{fpi}. Later in Section~2, we will describe in more detail.

% \begin{itemize}
%\item
%The existence of four extremal solutions of
%\[
%X = \mathcal{R}(X):= A^H \overline{X}A - A^H \overline{X}BR_X^{-1} B^H \overline{X}A + H
%\]
%is established under the FPIs, where $R_X:=R+B^H \overline{X}B$.
%\item
%\begin{enumerate}
%  \item If $\mathbb{T}\neq\emptyset$ and $\mathcal{R}_{\leq}\cap\mathbb{P}\neq\emptyset$, then $\mathbf{X_M}:=\max(\mathcal{R}_{\leq}\cap\mathbb{P}$) exists.
%  \item If $\mathbb{O}\neq\emptyset$ and $\mathcal{R}_{\leq}\cap\mathbb{P}\neq\emptyset$, then $\mathbf{X_m}:=\min(\mathcal{R}_{\leq}\cap\mathbb{P}$) exists.
%  \item If $\mathbb{O}\neq\emptyset$ and $\mathcal{R}_{\geq}\cap\mathbb{E}\neq\emptyset$, then $\mathbf{Y_M}:=\max(\mathcal{R}_{\geq}\cap\mathbb{E}$) exists.
%  \item If $\mathbb{T}\neq\emptyset$ and $\mathcal{R}_{\geq}\cap\mathbb{E}\neq\emptyset$, then $\mathbf{Y_m}:=\min(\mathcal{R}_{\geq}\cap\mathbb{E}$) exists.
%\end{enumerate}
%\end{itemize}

It is interesting to ask whether the minimal element $\mathbf{X_m}$ of $\mathbb{R}_= \cap \mathbb{P}$ or the maximal element $\mathbf{Y_M}$ of $\mathbb{R}_= \cap \mathbb{E}$
can be established in a unified framework of the fixed-point iteration~\eqref{fpi}? The following example gives a counterexample to this question. Namely, there is a possibility that the CDARE~\eqref{cdare} may has a minimal solution $\mathbf{X_m}$ for which $\mathbf{X_m}=\min(\mathbb{R}_=\cap\mathbb{P})$. However, the FPI \eqref{fpi} with any initial $X_0$ doesn't converge to $\mathbf{X_m}$.
\begin{example} \label{ex1}\em
We consider a scalar CDARE \eqref{cdare} with $n=m=1$ of the form
\begin{equation*} %\color{black}
  x  = |a|^2 \bar{x} -\frac{|a|^2 \bar{x}^2 |b|^2}{r + |b|^2\bar{x}} + h %\nonumber \\
   = \frac{|a|^2 \bar{x}}{1+g\bar{x}} + h, %\label{scdare}
\end{equation*}
where $a=e^{\frac{\pi}{6}i}$, $b=e^{\frac{\pi}{3}i}$, $r=h=1$ and thus $g= |b|^2 / r=1$. Therefore, it has exactly two Hermitian solutions $\mathbf{x_M}=\frac{1+\sqrt{5}}{2}> \frac{1-\sqrt{5}}{2}=\mathbf{x_m}$. Note that $\mathbf{x_m}$ and $\mathbf{x_M}$ both are elements of $\mathbb{R}_=\cap\mathbb{P}=\{\mathbf{x_m},\mathbf{x_M}\}$. It is easy to check that the equivalent expressions of the sequence $\{x_k\}_{k=0}^\infty$ generated by the FPI \eqref{fpi} with any $x_0\in\mathbb{R}$ can be rewritten in the explicit form
\begin{align*}
x_{k}
=\mathbf{x_M}+\dfrac{\sqrt{5}(x_0-\mathbf{x_M})t^k}{x_0-\mathbf{x_m}-(x_0-\mathbf{x_M})t^k},
\end{align*}
 provided that $x_k$ exists for $k\geq 0$, where $t=\frac{7-3\sqrt{5}}{2}\in(-1,1)$. It is immediate that the FPI \eqref{fpi} with any $x_0$ converges to the maximal solution $\mathbf{x_M}$.
\end{example}
%
%With $H\in\mathbb{P}$, it is easy to prove that $-1\not\in\sigma(G\overline{H})$ and the CDARE~\eqref{cdare-a} can be transformed into
%a discrete-time algebraic Riccati equation (DARE) of the form
%\begin{equation}\label{dare2}
%X = \widetilde{\mathcal{R}} (X) =\widetilde{H}+\widetilde{A}^H  X \widetilde{A} - \widetilde{A}^H  X \widetilde{B}
%(\widetilde{R}+\widetilde{B}^H X \widetilde{B})^{-1} \widetilde{B}^H X\widetilde{A},
%\end{equation}
%where $\widetilde{\mathcal{R}} (X) := \mathcal{R}(\mathcal{R}(X)) $ and the coefficient matrices are given by
%\begin{align*}
%\widetilde{A}&= %\overline{A}A-\overline{A}B(R+B^H \overline{H} B)^{-1}B^H\overline{H}A
%=\overline{A}(I+G\overline{H})^{-1}A,\quad
%\widetilde{B} = \bb \overline{B} & \overline{A}B \eb,\\
%\widetilde{R} &= \overline{R}\oplus (R+B^H \overline{H} B),\quad
%%\widetilde{G} = \widehat{B} \widehat{R}^{-1} \widehat{B}^\ast=\overline{G}+\overline{A} (I+G\overline{H})^{-1}{G}\overline{A}^\ast,
%\widetilde{H} = H+{A}^H \overline{H}(I+G\overline{H})^{-1}{A}.
%\end{align*}
%Therefore, based on the semigroup property presented in \cite{l.c18,l.c20}, the unique solution $X>0$ to the CDARE \eqref{cdare-a}
%can be computed superlinearly from the standard DARE \eqref{dare2}.
%In addition, this numerical technique has also been utilized to some real-life applications recently,
%see, e.g., \cite{l.w.g20,r.l.a20,r.l.e21}.
The above example observes that the existence of $\mathbf{X_m}$ (and similar to $\mathbf{Y_M}$) is not ensured by a unifying treatment
based the construction of FPI~\eqref{fpi}. This is the motivational thought that drives the study in this paper.

As previously mentioned, the aim of the paper is to investigate the existence for the minimal element $\mathbf{X_m}$ of $\mathbb{R}_= \cap \mathbb{P}$, the maximal element $\mathbf{Y_M}$ of $\mathbb{R}_= \cap \mathbb{E}$ to CDARE~\eqref{cdare}, etc. Our contributions are summarized as follows. With the aid of two auxiliary equations, we have shown that CDARE in this class~\eqref{cdare} has two extremal solutions $\mathbf{X_m}$ and $\mathbf{Y_M}$ and almost
(anti-)stabilizing solutions under certain conditions.
In addition, we prove that two extremal pairs $(\mathbf{X_M},\mathbf{X_m})$ and $(\mathbf{Y_M},\mathbf{Y_m})$
cannot be simultaneously assigned to a CDARE~\eqref{cdare}.
%%%
%In this paper, we focus on computing the positive semidefinite solution of the CARE by solving polynomial optimization problems.
  Another contribution in this paper is shown that the constructive proofs for the existence and
uniqueness result of ten Hermitian solutions including $\mathbf{\mathbf{Z_M}}$, $\mathbf{Z_m}$, $\mathbf{\mathbf{W_M}}$ and $\mathbf{W_m}$ to the CDARE~\eqref{cdare} under some
mild and reasonable assumptions. For the sake of clarity, Table~\ref{Fx} lists every iterative solution with suitable initial value coincides with extremal solution or almost (anti-)stabilizing solution, respectively, as we will discuss later.
\begin{table}[tbhp] %\tiny
\begin{center}
\begin{tabular}{c|c|c|c}
\hline
Extremal solutions & General assumption & FPI & Convergence  \\
 &$A\in\mathbb{C}^{n\times n},\,G,H\in\mathbb{H}_n$  &  &  region \\
% & &  & First appear \\
\hline\hline\\[-2ex]
 $\mathbf{X_M}=$ &$\mathbb{R}_\leq\cap\mathbb{P}\neq\emptyset$ and &$X_{k+1}=\mathcal{R}(X_k)$& $\mathbb{U}_\geq$ \\
 $\max(\mathbb{R}_=\cap\mathbb{P})$ &$\mathbb{T}\neq\emptyset$ & \\
 \hline\\[-2ex]
 $\mathbf{X_m}=$ &$\mathbb{R}_\leq\cap\mathbb{P}\neq\emptyset$, $\mathbb{O}\neq\emptyset$ &$X_{k+1}=-{\mathcal{R}^{-1}}(-X_k)$& $-{\mathbb{V}}_\geq$ \\
 $\min(\mathbb{R}_=\cap\mathbb{P})$  & and $\overline{H_A}\in\mathrm{dom}(\mathcal{R})$ &\\
\hline\\[-2ex]
 $\mathbf{Y_M}=$ &$\mathbb{R}_\geq\cap\mathbb{E}\neq\emptyset$, $\mathbb{O}\neq\emptyset$ &$X_{k+1}=-{\mathcal{R}^{-1}}(-X_k)$& $-{{\mathbb{V}}}_\leq$ \\
$\max(\mathbb{R}_=\cap\mathbb{E})$ & and $\overline{H_A}\in\mathrm{dom}(\mathcal{R})$ &\\
\hline\\[-2ex]
 $\mathbf{Y_m}=$ &$\mathbb{R}_\geq\cap\mathbb{E}\neq\emptyset$ and &$X_{k+1}=\mathcal{R}(X_k)$& $\mathbb{U}_\leq$ \\
 $\min(\mathbb{R}_=\cap\mathbb{E})$ &$\mathbb{T}\neq\emptyset$ & \\
\hline\hline\\[-2ex]
Extremal solutions & General assumption & FPI & Convergence  \\
 &$A\in\mathbb{C}^{n\times n},\,G,H\in\mathbb{N}_n$  &  &  region \\
% & &  & First appear \\
\hline\hline\\[-2ex]
$\mathbf{Z_M}=$ &$\mathbb{T}\neq\emptyset$  & $X_{k+1}=\mathcal{R}(X_k)$& $\mathbb{U}_\geq$ \\
 $\max(\mathbb{R}_=\cap\mathbb{N}_n)$ &  & \\
 \hline\\[-2ex]
 $\mathbf{Z_m}=$ & $\mathbb{R}_\geq\cap\mathbb{N}_n\neq\emptyset$ &$X_{k+1}=\mathcal{R}(X_k)$& $[0,H]$ \\
 $\min(\mathbb{R}_=\cap\mathbb{N}_n)$ & & \\
\hline\\[-2ex]
$\mathbf{W_M}=$ &$A$ is nonsingular,
 &$X_{k+1}=-{\mathcal{R}^{-1}}(-X_k)$& $[0,-\widetilde{H}]$ \\
 $\max(\mathbb{R}_=\cap-\mathbb{N}_n)$ & and $\mathbb{R}_\geq\cap-\mathbb{N}_n\neq\emptyset$ & \\
[0.5ex]
\hline\hline\\[-2ex]
$\mathbf{W_m}=$ &$\mathbb{O}\neq\emptyset$  & $X_{k+1}=-{\mathcal{R}^{-1}}(-X_k)$& $-\overline{\mathbb{V}}_\geq$ \\
 $\min(\mathbb{R}_=\cap-\mathbb{N}_n)$ &  & \\
 \hline\hline\\[-2ex]
Upper bounds & General assumption & FPI & Convergence  \\
 &$A\in\mathbb{C}^{n\times n},\,G,H\in\mathbb{H}_n$  &  &  region \\
% & &  & First appear \\
\hline\hline\\[-2ex]
$\mathbf{X_{a.s.}}=$ &$\mathbb{T}\neq\emptyset$, $R>0$  & $X_{k+1}=\mathcal{R}(X_k)$& $\mathbb{U}_\geq$ \\
 $\mathbf{U}(\mathbb{R}_=\cap\mathbb{P})$ &and $H\geq 0$  & \\
 \hline\\[-2ex]
 $\mathbf{X_{a.a.s.}}=$ &$\mathcal{R}$ is order preserving, &$X_{k+1}=\mathcal{R}(X_k)$& $[0,H]$ \\
 $\min(\mathbb{R}_=\cap\mathbb{N}_n)$ &$H\geq 0$ and $\mathbb{R}_\geq\cap\mathbb{N}_n\neq\emptyset$ & \\
 [0.5ex]
 \hline
 \end{tabular}
\end{center}
\caption{The constructive iterations for the existence of ten specific Hermitian solutions}\label{Fx}
\end{table}
%%%

The rest of this paper is organized as follows. In Section~\ref{sec2}, some notations and preliminaries are first provided. Next, we review briefly recent work
for which establish the existence of two extremal solutions $\mathbf{X_M}$ and $\mathbf{Y_m}$
based on the framework of FPI~\eqref{fpi}. Especially, we introduce two
auxiliary matrix equations in order to obtain another extremal solutions. Furthermore,
the convergence region techniques are used to derive some iterative solutions. Section 3 devotes to the development
of the dual-CDARE which guarantee the existence of two extremal solutions $\mathbf{X_m}$ and $\mathbf{Y_M}$, and the existence of another extremal solutions to CDAREs~\eqref{cdare} will be addressed. Two subsets $\mathbb{R}_\leq \cap \mathbb{P}$ and $\mathbb{R}_\geq \cap \mathbb{E}$ have empty intersection will be discussed {in Section~\ref{sec4} under reasonable hypotheses. The (almost) stabilizing solution and almost (anti-)stabilizing solution are studied in Section~\ref{sec4.5}. Finally, we conclude the paper in Section~\ref{conclusion}.

\section{Preliminaries}\label{sec2}
First of all, we will introduce some notations and definitions which we need in the rest of the paper.
\subsection{Some useful definitions, theory and notations}
For any $M,N\in \mathbb{H}_n$, the positive definite and positive semidefinite matrices are denoted by $M > 0$ and $M\geq 0$, respectively.
Moreover, we usually denote by $M \geq N$ (or $M \leq N$) if $M - N \geq 0$ (or $N - M \geq 0$) in the context. We use $[M,N]$ to denote the set,
$\{X\in\mathbb{H}_n|M\leq X \leq N\}$ when $N\geq M$.
For the sake of simplicity, the spectrum and spectral radius of $A\in \mathbb{C}^{n\times n}$ are
denoted by $\sigma (A)$ and $\rho (A)$, respectively, and $\mu(A) := \min \{|\lambda|\ |\ \lambda\in \sigma (A)\}$.
We denote, the complement of the open unit disk by $\mathbb{D}_{\geq}$, the closed unit disk by $\mathbb{D}_{\leq}$ and
the unit disk by $\mathbb{D}_{=}$, respectively.
%VV For the sake of clarity
Given a subset $S$ of $\mathbb{H}_n$, the following specific definitions will be employed through the paper.
\begin{Definition}\par\noindent
The following definitions are given for $S\subseteq\mathbb{H}_n$.
\begin{enumerate}
  \item $-S:=\{-X;X\in S\}$ and $\overline{S}:=\{\overline{X};X\in S\}$.
  \item $S$ is called a $\mathcal{R}$-invariant set if $\mathcal{R}(S)\subseteq S\subseteq \mathrm{dom}(\mathcal{R})$.
  \item
   $S$ is called a convergence region to a Hermitian solution $X_\ast$ under the FPI~\eqref{fpi} if the sequence $\{X_k\}_{k=0}^\infty$ with any $X_0\in S$ generated by FPI~\eqref{fpi} converge to $X_\ast$.
  \item A matrix operator $f:S\rightarrow\mathbb{H}_{n}$ is order preserving (resp. to  reversing) on $S$ if $f(A)\geq f(B)$ (resp. to $f(A)\leq f(B)$) when $A\geq B$ and $A,B\in S$.
  \item $S_U$ (resp. $S_L$) is an upper (resp. lower) bound of $S$ if $S_U\geq X$ (resp. to $S_L\leq X$) for all $X\in S$. The collection of all upper (resp. to lower) bounds of $S$ is denoted by $\mathbf{U}(S)$ (resp. to $\mathbf{L}(S)$). Moreover, $S_U=\max(S)$ (resp. $S_L=\min(S)$) is the maximal (resp. minimal) element of $S$ if $S_U\in S$ (resp. $S_L\in S$).
\end{enumerate}
\end{Definition}
We introduce the following famous results which we need in the rest of this paper. The results in the following lemma either follow immediately from the definition or are easy to verify.
%In both cases, we may assume without loss of generality that $Q = I_n$ through this paper.
\begin{Lemma}\cite{Bernstein2009}\label{Schur}
Let $A$ be an arbitrary matrix of size $n$. $X$ and $Y$ are two $n\times n$ positive definite matrices. Then,
\begin{itemize}
%\item[1.] $X>Y$ if and only if $X^{\frac{1}{2}}>Y^{\frac{1}{2}}$.
\item[1.]Sherman Morrison Woodbury formula (SMWF):

 Assume that $Y^{-1}\pm AX^{-1}A^H$ is nonsingular. Then, $X\pm A^H Y A$ is invertible and
\[
(X\pm A^H Y A)^{-1}=X^{-1}\mp X^{-1}A^H(Y^{-1}\pm AX^{-1}A^H)^{-1}AX^{-1}.
\]
\item[2.]A Schur complement condition for positive definiteness and positive semidefiniteness:

 A square complex matrix $\Psi$ is partitioned as
$
\Psi:=\bb X & A \\ A^H & Y\eb.
$
%The Schur complement with respect to $X$ is defined by $Y-A^H X^{-1} A$ and the Schur complement with respect to $Y$ is defined by $X-A Y^{-1}A^H$.
Then, $\Psi>0$ (resp. $\Psi\geq 0$) if and only if the Schur complement $\Psi/X:=Y-A^H X^{-1} A>0$ (resp. $Y-A^H X^{-1} A\geq0$) if and only if the Schur complement $\Psi/Y:=X-A Y^{-1}A^H>0$ (resp. $X-A Y^{-1}A^H\geq0$).
%\item[3.]$X>Y$ ($X\geq Y$) if and only if  $\rho(YX^{-1})=\rho(X^{-1}Y)<1$ ($\rho(X^{-1}Y)\leq 1$), where $\rho(X)$ denotes the spectral radius of $X$.
\end{itemize}
\end{Lemma}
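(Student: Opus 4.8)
The plan is to verify each of the two stated identities directly, since both are classical facts of matrix algebra and, as the statement itself notes, are ``easy to verify.'' For the Sherman--Morrison--Woodbury formula (SMWF) I would avoid any appeal to determinants and instead confirm the proposed inverse by multiplication. Writing $W := Y^{-1} \pm A X^{-1} A^H$ for the matrix assumed nonsingular, I would form the product of $(X \pm A^H Y A)$ with the candidate $X^{-1} \mp X^{-1} A^H W^{-1} A X^{-1}$ and show it collapses to $I$. The one algebraic manoeuvre that makes the cross terms telescope is to substitute $A X^{-1} A^H = \pm(W - Y^{-1})$ into the quartic term $A^H Y (A X^{-1} A^H) W^{-1} A X^{-1}$; upon expanding, its two summands cancel the two remaining linear cross terms $\pm A^H Y A X^{-1}$ and $\mp A^H W^{-1} A X^{-1}$ exactly, so the product reduces to $I$. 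Carrying the $\pm$ / $\mp$ symbols throughout lets both sign choices be handled in a single computation, and the invertibility of $X \pm A^H Y A$ then follows because we have exhibited a genuine two-sided inverse.

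For the Schur-complement characterization I would use a block congruence ($LDL^H$) factorization rather than reason eigenvalue by eigenvalue. With $X = X^H > 0$ invertible, set
\[
L := \begin{bmatrix} I & 0 \\ A^H X^{-1} & I \end{bmatrix}, \qquad
D := \begin{bmatrix} X & 0 \\ 0 & Y - A^H X^{-1} A \end{bmatrix},
\]
so that a direct block multiplication gives $\Psi = L D L^H$, where $L^H = \begin{bmatrix} I & X^{-1} A \\ 0 & I \end{bmatrix}$ because $X^{-1}$ is Hermitian. Since $L$ is unit lower triangular it is invertible, so this is a congruence; by Sylvester's law of inertia $\Psi$ and $D$ share the same numbers of positive, negative, and zero eigenvalues. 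Hence $\Psi > 0$ (resp.\ $\Psi \geq 0$) if and only if $D > 0$ (resp.\ $\geq 0$), and since the $(1,1)$ block $X$ is already positive definite this reduces precisely to the stated condition on the Schur complement $\Psi/X = Y - A^H X^{-1} A$. The equivalence with $\Psi/Y = X - A Y^{-1} A^H$ is obtained identically from the mirror factorization pivoting on the $(2,2)$ block $Y > 0$.

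Since both parts are routine, I do not anticipate a genuine obstacle; the only points requiring care are bookkeeping. In SMWF one must track the coupled signs so the two cases are not conflated, and one must remember that the hypothesis concerns the inner matrix $W$ rather than $X \pm A^H Y A$ itself, so the logical direction is to \emph{produce} the inverse explicitly rather than to assume it exists. In the Schur-complement part the subtle point is that the semidefinite equivalence relies on congruence preserving the count of zero eigenvalues, which is legitimate precisely because the transforming matrix $L$ is invertible; were $X$ merely positive semidefinite the complement $\Psi/X$ would be undefined, so the standing assumption $X > 0$ is exactly what keeps both the factorization and the reduction valid.
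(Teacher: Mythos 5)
Your proof is correct. Note, however, that the paper itself offers no proof of this lemma at all: it is imported verbatim from the reference (Bernstein, \emph{Matrix Mathematics}, 2009) with only the remark that the results ``either follow immediately from the definition or are easy to verify,'' so there is no argument in the paper to compare yours against. What you supply is the standard textbook argument, and both halves check out. For the SMWF, writing $W := Y^{-1} \pm AX^{-1}A^H$ and substituting $AX^{-1}A^H = \pm(W - Y^{-1})$ into the quartic term does make the expansion of
\[
(X \pm A^H Y A)\left(X^{-1} \mp X^{-1}A^H W^{-1} A X^{-1}\right)
\]
collapse to $I$ exactly as you describe; the only cosmetic gap is that you verify the product on one side only, so you should add the (standard) remark that a one-sided inverse of a square matrix is automatically two-sided, which is what licenses the phrase ``genuine two-sided inverse.'' For the Schur-complement part, the block factorization $\Psi = LDL^H$ with unit lower-triangular $L$ is a congruence, Sylvester's law of inertia transfers (semi)definiteness between $\Psi$ and $D$, and the hypothesis $X>0$ (resp. $Y>0$ for the mirror factorization) is indeed precisely what makes each pivot legal — your closing observation about where positive definiteness of the pivot block enters is the right one.
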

To advance the readability of the paper more clearly, all extremal solution and almost (anti-)stabilizing solutions are defined in Table~\ref{Fx5} and Table~\ref{Fx6}, respectively. Table~\ref{Fx1} and Table~\ref{Fx2} list all useful notations arising from CDARE~\eqref{cdare} and two auxiliary matrix equations, which both are classified into three categories, respectively.
\begin{table}[tbhp]
\begin{center}
\begin{tabular}{c|c|c|c|c|c|c|c|c|c}
\hline\\[-2ex]
Subset & $\mathbb{R}_\leq\cap\mathbb{P}$ &$\mathbb{R}_\geq\cap\mathbb{E}$   & $\widetilde{\mathbb{R}}_\leq\cap\widetilde{\mathbb{P}}$ &$\widetilde{\mathbb{R}}_\geq\cap\widetilde{\mathbb{E}}$& $\widehat{\mathbb{R}}_\leq\cap\widehat{\mathbb{P}}$ &$\widehat{\mathbb{R}}_\geq\cap\widehat{\mathbb{E}}$
&${\mathbb{R}}_=\cap{\mathbb{N}_n}$& ${\mathbb{R}}_=\cap{-\mathbb{N}_n}$
&$\widetilde{\mathbb{R}}_=\cap{\mathbb{N}_n}$
 \\[0.5ex]
% & &  & First appear \\
\hline\\[-2ex]
maximizer&$\mathbf{X_M}$&$\mathbf{Y_M}$&$\mathbf{\widetilde{X}_M}$
&$\mathbf{\widetilde{Y}_M}$&$\mathbf{\widehat{X}_M}$&$\mathbf{\widehat{Y}_M}$ &$\mathbf{{Z}_M}$&$\mathbf{{W}_M}$&$\mathbf{\widetilde{Z}_M}$
\\
\hline\\[-2ex]
minimizer&$\mathbf{X_m}$&$\mathbf{Y_m}$&$\mathbf{\widetilde{X}_m}$
&$\mathbf{\widetilde{Y}_m}$&$\mathbf{\widehat{X}_m}$&$\mathbf{\widehat{Y}_m}$  &$\mathbf{{Z}_m}$&$\mathbf{{W}_m}$&$\mathbf{\widetilde{Z}_m}$
\\
 \hline
 \end{tabular}
\end{center}
\caption{The notations of all extremal solutions in the paper}\label{Fx5}
\end{table}
\begin{table}[tbhp] %\tiny
\begin{center}
\begin{tabular}{c|c|c|c}
\hline\\[-2ex]
Equations &CDARE&dual-CDARE& transformed-DARE \\[0.5ex]\hline\\[-2ex]
 almost & $\mathbf{X_{a.s.}}\in\mathbb{R}_=$ with &$\mathbf{\widetilde{X}_{a.s.}}\in\widetilde{\mathbb{R}}_=$ with & $\mathbf{\widehat{X}_{a.s.}}\in\widehat{\mathbb{R}}_=$ with
 \\[0.5ex]\\[-2ex]
 stabilizing solution& $\rho(\widehat{T}_{\mathbf{X_{a.s.}}})\leq 1$ &$\rho(\widehat{U}_{\mathbf{\widetilde{X}_{a.s.}}})\leq 1$   & $\rho(\widehat{T}^{(D)}_{\mathbf{\widehat{X}_{a.s.}}})\leq 1$ \\[1ex]\hline\hline\\[-2ex]
 almost anti- & $\mathbf{X_{a.a.s.}}\in\mathbb{R}_=$ with &$\mathbf{\widetilde{X}_{a.a.s.}}\in\widetilde{\mathbb{R}}_=$ with & $\mathbf{\widehat{X}_{a.a.s.}}\in\widehat{\mathbb{R}}_=$ with
 \\[0.5ex]\\[-2ex]
 stabilizing solution& $\mu(\widehat{T}_{\mathbf{X_{a.a.s.}}})\geq 1$ &$\mu(\widehat{U}_{\mathbf{\widetilde{X}_{a.a.s.}}})\geq 1$   & $\mu(\widehat{T}^{(D)}_{\mathbf{\widehat{X}_{a.a.s.}}})\geq 1$
% & &  & First appear \\

\\[1ex]\hline
 \end{tabular}
\end{center}
\caption{The notations of all (anti-)stabilizing solutions}\label{Fx6}
\end{table}

\begin{table}[tbhp] %\tiny
\begin{center}
\begin{tabular}{c|c|c}
\hline
Category & Notation &Definition  \\
% & &  & First appear \\
\hline\\[-2ex]
&$\mathrm{dom}(\mathcal{R})$&$\{X\in \mathbb{H}_n\, |\, \det(I+GX)\neq 0\}$  \\
 &$\mathbb{R}_\leq$&$\{X\in \mathrm{dom}(\mathcal{R})\, |\, X  \leq  \mathcal{R}(X) \}$  \\
&$\mathbb{R}_=$&$\{X\in \mathrm{dom}(\mathcal{R})\, |\, X =\mathcal{R}(X) \}$ \\
 &$\mathbb{R}_\geq$&$\{X\in \mathrm{dom}(\mathcal{R})\, |\, X  \geq  \mathcal{R}(X) \}$ \\
 &$\mathbb{U}_\geq$&$\bigcup\limits_{W \in\mathbb{T}} \{ X\in \mathbb{H}_n\, |\, \mathcal{C}_{T_W} (X)\geq H_{F_W}\}$\\
Subsets of $\mathbb{H}_n$ &$\mathbb{U}_\leq$&$\bigcup\limits_{W \in\mathbb{T}} \{ X\in \mathbb{H}_n\, |\, \mathcal{C}_{T_W} (X)\leq H_{F_W}\}$\\
 &$\mathbb{V}_\geq$&$\bigcup\limits_{W \in\mathbb{O}} \{ X\in \mathbb{H}_n\, |\, \mathcal{C}_{T_W} (X)\geq H_{F_W}\}$\\
&$\mathbb{V}_\leq$&$\bigcup\limits_{W \in\mathbb{O}} \{ X\in \mathbb{H}_n\, |\, \mathcal{C}_{T_W} (X)\leq H_{F_W}\}$\\
&$\mathbb{T}$&$\{X\in\mathrm{dom}(\mathcal{R}) \,|\, \rho(\widehat{T}_X)<1\}$ \\
&$\mathbb{O}$&$\{X\in\mathrm{dom}(\mathcal{R}) \,|\, \mu(\widehat{T}_X)>1\}$ \\
 &$\mathbb{P}$&$\{X\in \mathrm{dom}(\mathcal{R}) \,|\, R_X>0\}$  \\
&$\mathbb{E}$&$\{X\in \mathrm{dom}(\mathcal{R}) \,|\, R_X<0\}$  \\
% &$\mathbb{D}$& & page. 2\\
\hline\hline\\[-2ex]
&$R_X$&$R+B^H\overline{X}B$  \\
&$F_X$&$R_X^{-1}B^H \overline{X}A$ \\
Specific matrices &$T_X$&$A-BF_X$ or $(I + G \overline{X})^{-1}A$  \\
 &$\widehat{T}_X$&$\overline{T_X}T_X$ \\
 &$H_F$&$H+F^H R F$ \\
%Specific matrices& & &\\
\hline\hline\\[-2ex]
%&$\mathcal{S}_A(X)$&$X-A^H {X} A$& \\
Matrix operators &$\mathcal{C}_A(X)$&$X-A^H \overline{X} A$ \\
 &$\mathcal{K}_F(X)$&$(F-F_{X})^HR_X(F-F_{X})$ \\
 %&$\mathcal{K}(Y,X)$&$\mathcal{K}_{F_Y}(X)$ & Lemma~\ref{lem2p1}\\
%Matrix operators& & &\\
 \hline
 \end{tabular}
\end{center}
\caption{Some particular notations arising
from CDARE~}\label{Fx1}
\end{table}

\begin{table}[tbhp] %\tiny
\begin{center}
\begin{tabular}{c|c|c}
\hline
Category & Notation &Definition  \\
% & &  & First appear \\
\hline\\[-2ex]
&$\mathrm{dom}(\widehat{\mathcal{R}})$&$\{X\in \mathbb{H}_n\, |\, \det(I+\widehat{G}X)\neq 0\}$  \\
 &$\widehat{\mathbb{R}}_\leq$&$\{X\in \mathrm{dom}(\mathcal{\widehat{R}})\, |\, X  \leq  \widehat{\mathcal{R}}(X) \}$  \\
&$\widehat{\mathbb{R}}_=$&$\{X\in \mathrm{dom}(\mathcal{R})\, |\, X =\widehat{\mathcal{R}}(X) \}$ \\
 &$\widehat{\mathbb{R}}_\geq$&$\{X\in \mathrm{dom}(\widehat{\mathcal{R}})\, |\, X  \geq  \widehat{\mathcal{R}}(X) \}$ \\
 &$\widehat{\mathbb{U}}_\geq$&$\bigcup\limits_{W \in\widehat{\mathbb{T}}} \{ X\in \mathbb{H}_n\, |\, \mathcal{S}_{\widehat{T}^{(D)}_W} (X)\geq \widehat{H}_{\widehat{F}_W}\}$\\
Subsets of $\mathbb{H}_n$ &$\widehat{\mathbb{U}}_\leq$&$\bigcup\limits_{W \in\widehat{\mathbb{T}}} \{ X\in \mathbb{H}_n\, |\, \mathcal{S}_{\widehat{T}^{(D)}_W} (X)\leq \widehat{H}_{\widehat{F}_W}\}$\\
 &$\widehat{\mathbb{V}}_\geq$&$\bigcup\limits_{W \in\widehat{\mathbb{O}}} \{ X\in \mathbb{H}_n\, |\, \mathcal{S}_{\widehat{T}^{(D)}_W} (X)\geq \widehat{H}_{\widehat{F}_W}\}$\\
&$\widehat{\mathbb{V}}_\leq$&$\bigcup\limits_{W \in\widehat{\mathbb{O}}} \{ X\in \mathbb{H}_n\, |\, \mathcal{S}_{\widehat{T}^{(D)}_W} (X)\leq \widehat{H}_{\widehat{\widehat{F}}_W}\}$\\
&$\widehat{\mathbb{T}}$&$\{X\in\mathrm{dom}(\widehat{\mathcal{R}}) \,|\, \rho(\widehat{T}^{(D)}_X)<1\}$ \\
&$\widehat{\mathbb{O}}$&$\{X\in\mathrm{dom}(\widehat{\mathcal{R}}) \,|\, \mu(\widehat{T}^{(D)}_X)>1\}$ \\
 &$\widehat{\mathbb{P}}$&$\{X\in \mathrm{dom}(\widehat{\mathcal{R}}) \,|\, \widehat{R}_X>0\}$  \\
&$\widehat{\mathbb{E}}$&$\{X\in \mathrm{dom}(\widehat{\mathcal{R}}) \,|\, \widehat{R}_X<0\}$  \\
% &$\mathbb{D}$& & page. 2\\
\hline\hline\\[-2ex]
&$\widehat{R}_X$&$\widehat{R}+\widehat{B}^H{X}\widehat{B}$  \\
&$\widehat{F}_X$&$\widehat{R}_X^{-1}\widehat{B}^H {X}\widehat{A}$ \\
Specific matrices &$\widehat{T}^{(D)}_X$&$\widehat{A}-\widehat{B}\widehat{F}_X$ or $(I + \widehat{G} {X})^{-1}\widehat{A}$  \\[0.2ex]
  &$\widehat{H}_F$&$\widehat{H}+F^H \widehat{R} F$ \\
%Specific matrices& & &\\
\hline\hline\\[-2ex]
Matrix operators &$\mathcal{S}_{\widehat{A}}(X)$&$X-\widehat{A}^H {X} \widehat{A}$ \\
%&$\mathcal{C}_A(X)$&$X-A^H \overline{X} A$& \\
 &$\widehat{\mathcal{K}}_F(X)$&$(F-\widehat{F}_{X})^H\widehat{R}_X(F-\widehat{F}_{X})$ \\
 %&$\mathcal{K}(Y,X)$&$\mathcal{K}_{F_Y}(X)$ & Lemma~\ref{lem2p1}\\
%Matrix operators& & &\\
%\vspace{1cm}
 \hline
 \hline
 \hline\\[-2ex]
&$\mathrm{dom}(\widetilde{\mathcal{R}})$&$\{X\in \mathbb{H}_n\, |\, \det(I+\widetilde{G}X)\neq 0\}$  \\
 &$\widetilde{\mathbb{R}}_\leq$&$\{X\in \mathrm{dom}(\widetilde{\mathcal{R}})\, |\, X  \leq  {\widetilde{\mathcal{R}}}(X) \}$\\
&$\widetilde{\mathbb{R}}_=$&$\{X\in \mathrm{dom}(\widetilde{\mathcal{R}})\, |\, X ={\widetilde{\mathcal{R}}}(X) \}$ \\
 &$\widetilde{\mathbb{R}}_\geq$&$\{X\in \mathrm{dom}(\widetilde{\mathcal{R}})\, |\, X  \geq  {\widetilde{\mathcal{R}}}(X) \}$ \\
 &$\widetilde{\mathbb{U}}_\geq$&$\bigcup\limits_{W \in\widetilde{\mathbb{T}}} \{ X\in \mathbb{H}_n\, |\, \mathcal{C}_{\widetilde{T}_W} (X)\geq \widetilde{H}_{\widetilde{F}_W}\}$\\
Subsets of $\mathbb{H}_n$ &$\widetilde{\mathbb{U}}_\leq$&$\bigcup\limits_{W \in\widetilde{\mathbb{T}}} \{ X\in \mathbb{H}_n\, |\, \mathcal{C}_{\widetilde{T}_W} (X)\leq \widetilde{H}_{\widetilde{F}_W}\}$\\
 &$\widetilde{\mathbb{V}}_\geq$&$\bigcup\limits_{W \in\widetilde{\mathbb{O}}} \{ X\in \mathbb{H}_n\, |\, \mathcal{C}_{\widetilde{T}_W} (X)\geq \widetilde{H}_{\widetilde{F}_W}\}$\\
&$\widetilde{\mathbb{V}}_\leq$&$\bigcup\limits_{W \in\widetilde{\mathbb{O}}} \{ X\in \mathbb{H}_n\, |\, \mathcal{C}_{\widetilde{T}_W} (X)\leq \widetilde{H}_{\widetilde{F}_W}\}$\\
&$\widetilde{\mathbb{T}}$&$\{X\in\mathrm{dom}({\widetilde{\mathcal{R}}}) \,|\, \rho(\widetilde{T}_X)<1\}$ \\
&$\widetilde{\mathbb{O}}$&$\{X\in\mathrm{dom}({\widetilde{\mathcal{R}}}) \,|\, \mu(\widetilde{T}_X)>1\}$ \\
 &$\widetilde{\mathbb{P}}$&$\{X\in \mathrm{dom}(\widetilde{\mathcal{R}}) \,|\, \widetilde{R}_X>0\}$  \\
&$\widetilde{\mathbb{E}}$&$\{X\in \mathrm{dom}(\widetilde{\mathcal{R}}) \,|\, \widetilde{R}_X<0\}$  \\
% &$\mathbb{D}$& & page. 2\\
\hline\hline\\[-2ex]
&$\widetilde{R}_X$&$\widetilde{R}+\widetilde{B}^H{X}\widetilde{B}$  \\
&$\widetilde{F}_X$&$\widetilde{R}_X^{-1}\widetilde{B}^H {X}\widetilde{A}$ \\
Specific matrices &$U_X$&$\widetilde{A}-\widetilde{B}\widetilde{F}_X$ or $(I + \widetilde{G} {X})^{-1}\widetilde{A}$  \\
 &$\widehat{U}_X$&$U_X\overline{U}_X$ \\
 %&$\widehat{T}_X$&$\overline{T_X}T_X$& \\
 &$\widetilde{H}_F$&$\widetilde{H}+F^H \widetilde{R} F$ \\
%Specific matrices& & &\\
%\hline\hline\\[-2ex]
%%&$\mathcal{S}_A(X)$&$X-A^H {X} A$& \\
%Matrix operators &
% %$\widetilde{\mathcal{K}}_F(X)$&$(F-\widetilde{F}_{X})^H\widetilde{R}_X(F-\widetilde{F}_{X})$ %\\
 %&$\mathcal{K}(Y,X)$&$\mathcal{K}_{F_Y}(X)$ & Lemma~\ref{lem2p1}\\
%Matrix operators& & &\\
 \hline
 \end{tabular}
\end{center}
\caption{ Some particular notations arising
from two auxiliary equations}\label{Fx2}
\end{table}

\subsection{Two subsets $\mathbb{T}$ and $\mathbb{O}$}
In this subsection, two particular subsets of $\mathbb{H}_n$ are
introduced to make the convergence region, which play an
important role in the construction of all extremal solution. Given $X\in\mathrm{dom}(\mathcal{R})$, we denote $T_X$ by $R_X^{-1}A$
and $\widehat{T}_X=\overline{T_X}T_X$. Let
\begin{align}\label{TO}
\mathbb{T}:=\{X\in\mathrm{dom}(\mathcal{R}) \,|\, \rho(\widehat{T}_X<1\}\quad\mbox{and}\quad\mathbb{O}:=\{X\in\mathrm{dom}(\mathcal{R}) \,|\, \mu(\widehat{T}_X)>1\}.
\end{align}
The first proposition outline the useful identities concerning CDARE~\eqref{cdare}.
\begin{Proposition}\cite{CHIANG202471,MR4487984} \label{lem2p1} \color{black}
%For any $X \in\mathrm{dom}(\mathcal{R})$ and $F\in \mathbb{C}^{m\times n}$, the matrix identity
%  \begin{align}\label{Req-a}
%  X-\mathcal{R}(X)&=\mathcal{C}_{A_F}(X)-H_{F}+ \mathcal{K}_F (X)
%  \end{align}
%  holds, where $A_F := A-BF$, $\mathcal{K}_F (X) :=(F-F_X)^H R_X(F-F_X)$ and $H_{F} :=H+\mathcal{K}_F (0)= H+F^H R F$.
%  Let $X,Y\in\mbox{dom}(\mathcal{R})$. The following identities hold, which state the relationship between CDARE and a conjugate Stein matrix equation:
%}
Let $X\in\mathrm{dom}(\mathcal{R})$. The following two identities hold:
\begin{subequations}\label{Req}
\begin{enumerate}
  \item For any $F\in\mathbb{C}^{m\times n}$, then
  \begin{align}
  X-\mathcal{R}(X)&=\mathcal{C}_{A_{F}}(X)-H_F+\mathcal{K}_F(X), \label{Req-a}
  \end{align}
  where $A_{F}:=
  A-BF$, $H_F:=H+F^H R F$, $\mathcal{K}_F(X) := (F-F_{X})^HR_X(F-F_{X})$ and $F_X=R_X^{-1}B^H\overline{X}A$.
  {\color{black}
  \item For any $Y\in\mathrm{dom}(\mathcal{R})$, then
  \begin{align}
  X-\mathcal{R}(X)&=\mathcal{C}_{T_{Y}}(X)-H+(\mathcal{K}(Y,X)-\mathcal{K}(Y,0)) , \label{Req-b}
  \end{align}
where $\mathcal{K}(Y,X):=K_{F_Y}(X)$. %:=T_{Y}^H(\overline{Y}-\overline{X})BR_X^{-1}B^H(\overline{Y}-\overline{X})T_{Y}$.
}
%

%$\widetilde{T}_{Y}:=(I+\widetilde{G}Y)^{-1}\widetilde{A}$ and \widetilde{T}_{Y}^H(\overline{Y}-\overline{X})B\widetilde{R}_X^{-1}B^H(\overline{Y}-\overline{X})\widetilde{T}_{Y}$.

\end{enumerate}
\end{subequations}
\end{Proposition}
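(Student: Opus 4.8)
The plan is to prove both identities by a completing-the-square rearrangement of $\mathcal{R}(X)$, obtaining the second identity as a specialization of the first. The crucial preliminary observation is that for $X\in\mathbb{H}_n$ the conjugate $\overline{X}$ is again Hermitian, so that $R_X=R+B^H\overline{X}B$ is Hermitian and, moreover, $A^H\overline{X}B=(B^H\overline{X}A)^H=F_X^H R_X$. Together with $F_X=R_X^{-1}B^H\overline{X}A$ this lets me collapse the quadratic term into $A^H\overline{X}BR_X^{-1}B^H\overline{X}A=F_X^H R_X F_X$, and hence rewrite the defining relation in the reduced form $\mathcal{R}(X)=A^H\overline{X}A-F_X^H R_X F_X+H$. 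I expect this conjugate-transpose bookkeeping to be the only genuinely delicate point; everything afterwards is polynomial expansion.

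For \eqref{Req-a} I would expand the proposed right-hand side $\mathcal{C}_{A_F}(X)-H_F+\mathcal{K}_F(X)$ straight from the definitions. Writing $A_F=A-BF$ produces $A_F^H\overline{X}A_F=A^H\overline{X}A-A^H\overline{X}BF-F^HB^H\overline{X}A+F^HB^H\overline{X}BF$, while $\mathcal{K}_F(X)=(F-F_X)^H R_X(F-F_X)$ expands into $F^H R_X F$, the two cross terms $-F^H R_X F_X$ and $-F_X^H R_X F$, and $F_X^H R_X F_X$. Substituting $R_X F_X=B^H\overline{X}A$, $F_X^H R_X=A^H\overline{X}B$ and $F^H R_X F=F^H R F+F^H B^H\overline{X}BF$, I anticipate that every term carrying the free matrix $F$ cancels in pairs against the corresponding contributions from $A_F^H\overline{X}A_F$ and from $-H_F=-H-F^HRF$. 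What survives is exactly $X-A^H\overline{X}A-H+F_X^H R_X F_X$, which by the reduced form above equals $X-\mathcal{R}(X)$, proving the first identity.

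For \eqref{Req-b} I would simply take $F=F_Y$ in the first identity. Then $A_{F_Y}=A-BF_Y=T_Y$, so $\mathcal{C}_{A_{F_Y}}(X)=\mathcal{C}_{T_Y}(X)$, and by definition $\mathcal{K}_{F_Y}(X)=\mathcal{K}(Y,X)$, leaving $X-\mathcal{R}(X)=\mathcal{C}_{T_Y}(X)-H_{F_Y}+\mathcal{K}(Y,X)$. The only remaining task is to rewrite the constant $H_{F_Y}=H+F_Y^HRF_Y$. Evaluating $\mathcal{K}(Y,\cdot)$ at the origin and noting that $R_0=R$ and $F_0=R_0^{-1}B^H\overline{0}A=0$, I obtain $\mathcal{K}(Y,0)=F_Y^HRF_Y$, so that $H_{F_Y}=H+\mathcal{K}(Y,0)$. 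Substituting this back yields $X-\mathcal{R}(X)=\mathcal{C}_{T_Y}(X)-H+(\mathcal{K}(Y,X)-\mathcal{K}(Y,0))$, as claimed. As already flagged, the main obstacle is keeping the conjugate transposes consistent in the completing-the-square step; the remainder is routine algebra.
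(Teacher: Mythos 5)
Your proof is correct. The paper states this proposition as a cited result (from the references \cite{CHIANG202471,MR4487984}) without reproducing a proof, and your argument is the standard one used there: the key observation that $\overline{X}$ (hence $R_X$) is Hermitian for $X\in\mathbb{H}_n$, so that $F_X^H R_X = A^H\overline{X}B$, the completing-the-square expansion giving \eqref{Req-a}, and then \eqref{Req-b} obtained by specializing $F=F_Y$ in \eqref{Req-a} together with the identification $\mathcal{K}(Y,0)=F_Y^H R F_Y$ (valid since $R$ is nonsingular, so $0\in\mathrm{dom}(\mathcal{R})$ and $F_0=0$).
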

The following result
will be used frequently to
guarantee the nonemptyness of $\mathbb{T}$ and $\mathbb{O}$.
\begin{Lemma}\label{lem2p}
Let $B\in \mathbb{C}^{n\times n}$ and $Q\geq 0$. Suppose that  $\mathcal{C}_B(X)\geq Q$ for some $X\in\mathbb{H}_n$, the following statements hold.
\begin{enumerate}
\item
  Assume that
\begin{align}\label{5}
\mathrm{Ker}(Q)\cap\mathrm{Ker}(B^H\overline{Q}B)\subseteq \mathrm{Ker}(\overline{B}B-A)
\end{align}
for some $A\in\mathbb{C}^{n\times n}$, then
\begin{enumerate}
  \item $\rho(\overline{B}B)\leq \min\{1,\rho(A)\}$ if $X\geq 0$.
  \item $\mu(\overline{B}B)\geq \min\{1,\mu(A)\}$ if $X\leq 0$.
\end{enumerate}

%then $\rho(\overline{B}B)\leq \max\{1,\rho(A)\}$. Furthermore, we have $\rho(\overline{B}B)<$ if $\rho(A)< 1$.
  %
\item
Let $F_\lambda:=\{\lambda\in\mathbb{C}\, |\,\mathrm{Ker}(Q)\cap\mathrm{Ker}(B^H\overline{Q}B)\cap E_\lambda(\overline{B}B)=\{0\}\}$. Then,
\begin{enumerate}
  \item $\rho(\overline{B}B)<1$ if $X\geq 0$ and $F_\lambda\subseteq\mathbb{D}_\geq$.
  \item $\mu(\overline{B}B)>1$ if $X\leq 0$ and $F_\lambda\subseteq\mathbb{D}_\leq$.
\end{enumerate}
\end{enumerate}
\end{Lemma}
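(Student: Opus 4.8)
The plan is to collapse the two conjugate inequalities into a single, genuine Stein (discrete Lyapunov) inequality for the matrix $M:=\overline{B}B$, and then extract the spectral information by testing this inequality against eigenvectors of $M$. This reduction is the engine of the whole argument: it converts the anti-linear hypothesis into the classical form to which the definiteness of $X$ can be applied.

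\medskip

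First I would use both the hypothesis $\mathcal{C}_B(X)=X-B^H\overline{X}B\geq Q$ and its entrywise conjugate $\overline{X}-B^{T}X\overline{B}\geq\overline{Q}$ (note $\overline{B^H}=B^{T}$). Substituting the second bound for $\overline{X}$ in the first eliminates $\overline{X}$ and yields $X-M^{H}XM\geq\widetilde{Q}$, where $M=\overline{B}B$, $M^{H}=B^{H}B^{T}$, and $\widetilde{Q}:=Q+B^{H}\overline{Q}B\geq0$. Since $Q$ and $B^{H}\overline{Q}B$ are both positive semidefinite, $\mathrm{Ker}(\widetilde{Q})=\mathrm{Ker}(Q)\cap\mathrm{Ker}(B^{H}\overline{Q}B)$, so the subspace appearing in the hypotheses is exactly $\mathrm{Ker}(\widetilde{Q})$. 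Testing against an eigenpair $Mv=\lambda v$, $v\neq0$, then gives the scalar identity $(1-|\lambda|^{2})\,v^{H}Xv\geq v^{H}\widetilde{Q}v\geq0$, which produces the basic dichotomy: either $v\notin\mathrm{Ker}(\widetilde{Q})$, so $v^{H}\widetilde{Q}v>0$ and the sign of $v^{H}Xv$ (fixed by $X\geq0$ or $X\leq0$) forces $|\lambda|<1$ or $|\lambda|>1$; or $v\in\mathrm{Ker}(\widetilde{Q})\subseteq\mathrm{Ker}(M-A)$, so $Av=Mv=\lambda v$ and $\lambda\in\sigma(A)$, which bounds $|\lambda|$ above by $\rho(A)$ (resp.\ below by $\mu(A)$).

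\medskip

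For the lower-bound statements, items 1(b) and 2(b) with $X\leq0$, this dichotomy already suffices: every eigenvalue of $M$ satisfies $|\lambda|\geq1$ or $|\lambda|\geq\mu(A)$, hence $\mu(\overline{B}B)=\mu(M)\geq\min\{1,\mu(A)\}$; the strict version is obtained once the $F_\lambda$ hypothesis is invoked to forbid eigenvalues of modulus exactly one, since any eigenvalue whose eigenspace avoids $\mathrm{Ker}(\widetilde{Q})$ already carries $|\lambda|>1$ by the strict form of the dichotomy. For the upper-bound statements, items 1(a) and 2(a) with $X\geq0$, a single eigenvector does not control the eigenvalues whose eigenvectors lie in $\mathrm{Ker}(X)$, and this is where I expect the main obstacle. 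To handle it I would first show $\mathrm{Ker}(X)$ is $M$-invariant: if $Xv=0$ then $-(Mv)^{H}X(Mv)=v^{H}(X-M^{H}XM)v\geq v^{H}\widetilde{Q}v\geq0$, while $X\geq0$ gives $(Mv)^{H}X(Mv)\geq0$, forcing $X(Mv)=0$. Writing $M$ in block form relative to $\mathbb{C}^{n}=\mathrm{Ker}(X)^{\perp}\oplus\mathrm{Ker}(X)$, the definiteness of $X$ on the first summand collapses the coupling block, so $M$ is block diagonal; the restriction to $\mathrm{Ker}(X)^{\perp}$ satisfies a Stein inequality with a positive definite weight (giving the unit-disc bound) and the restriction to $\mathrm{Ker}(X)$ agrees with $A$ (giving the $\sigma(A)$ bound).

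\medskip

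The delicate bookkeeping lies precisely in the semidefinite case just isolated: reconciling the unit-disc bound coming from $\mathrm{Ker}(X)^{\perp}$ with the $\sigma(A)$ bound coming from $\mathrm{Ker}(X)$ to land on the claimed extremal value, and verifying that the $F_\lambda$ condition removes exactly the boundary eigenvalues needed to pass from the non-strict bounds of item~1 to the strict inequalities $\rho(\overline{B}B)<1$ and $\mu(\overline{B}B)>1$ of item~2. I would treat this boundary analysis, rather than the algebraic reduction, as the crux of the proof, since the reduction to the Stein inequality and the eigenvector test are essentially forced, whereas the combination of the two spectral bounds in the presence of a nontrivial kernel of $X$ is where care is required.
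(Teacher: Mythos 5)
Your core reduction is exactly the paper's proof. The collapsed Stein inequality $\mathcal{S}_{\overline{B}B}(X)\geq Q+B^H\overline{Q}B$, tested on an eigenvector of $\overline{B}B$, is precisely the paper's display \eqref{8}, and your per-eigenvector dichotomy (either $v\notin\mathrm{Ker}(Q)\cap\mathrm{Ker}(B^H\overline{Q}B)$, which pins $|\lambda|$ strictly on one side of $1$ according to the sign of $X$, or $v$ lies in that kernel, whence $\overline{B}Bv=Av$ by \eqref{5} and $\lambda\in\sigma(A)$) is the paper's entire argument for part~1; your handling of 1(b) and your mechanism for part~2 (the $F_\lambda$ condition excludes eigenvalues whose eigenspaces meet the kernel) coincide with the paper's as well. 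One caveat on part~2: the hypothesis only functions if read as $\sigma(\overline{B}B)\cap\mathbb{D}_\geq\subseteq F_\lambda$ (resp.\ $\sigma(\overline{B}B)\cap\mathbb{D}_\leq\subseteq F_\lambda$) --- the paper's inclusion is written backwards --- and it must exclude kernel-meeting eigenvalues of \emph{every} modulus in the closed region, not only unimodular ones, since \eqref{8} gives no information when the eigenvector is annihilated by $X$ and the kernel matrices.

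Where you diverge from the paper --- the $\mathrm{Ker}(X)$-invariance and block splitting for the $X\geq 0$ cases --- is partly unnecessary and partly chasing something unprovable. Part 2(a) needs no block analysis: exactly as in your 2(b), an eigenvalue with $|\lambda|\geq 1$ forces, via \eqref{8} and $X\geq 0$, every eigenvector into $\mathrm{Ker}(Q)\cap\mathrm{Ker}(B^H\overline{Q}B)$, and the (correctly read) $F_\lambda$ hypothesis then makes $E_\lambda(\overline{B}B)$ trivial, a contradiction; that is the paper's proof. More importantly, the ``crux'' you isolate for 1(a) cannot be closed, because 1(a) as stated is false: take $n=1$, $b=\tfrac12$, $x=4$, $q=1$, $A=0$; then $\mathcal{C}_B(X)=3\geq 1=Q$, $X\geq 0$, and \eqref{5} is vacuous since $\mathrm{Ker}(Q)=\{0\}$, yet $\rho(\overline{B}B)=\tfrac14>0=\min\{1,\rho(A)\}$. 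What is provable --- and what the paper's proof, your dichotomy, and your block splitting all deliver (note your $M$ is only block lower \emph{triangular}, the coupling block is not forced to vanish, but triangularity already gives $\sigma(M)=\sigma(M_{11})\cup\sigma(M_{22})$) --- is the dichotomy itself: every eigenvalue of $\overline{B}B$ has modulus $<1$ or belongs to $\sigma(A)$, so in particular $\rho(\overline{B}B)<1$ whenever $\rho(A)<1$, which is the only consequence used downstream (Proposition~\ref{pro4}). The asymmetry with 1(b) is genuine: there the $\min$ bound does follow, because each eigenvalue dominates one of the two quantities $1$ and $\mu(A)$, and both dominate $\min\{1,\mu(A)\}$, whereas the analogous deduction for maxima yields only $\max\{1,\rho(A)\}$. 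So your proposal proves everything the paper's proof actually establishes, by the same route; the step you flag as open is a defect of the lemma's statement (which the paper papers over with ``the remaining statement is clearly true''), not a gap any argument could fill.
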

\begin{proof}
Let $\lambda\in\sigma(\overline{B}B)$ and an eigenvector $x\in E_\lambda(\overline{B}B)$. Obviously,
\begin{align}\label{8}
  x^H \mathcal{S}_{\overline{B}B}(X) x=(1-|\lambda|^2)(x^H X x) \geq x^H Q x +x^H B^H \overline{Q} Bx\geq 0.
\end{align}
%\begin{enumerate}
%  \item[(i)]
%    $|\lambda|< 1$ if  $x\not\in\mathrm{Ker}(Q)\cap\mathrm{Ker}(B^H\overline{Q}B)$ Otherwise, $Ax=\overline{B}B x+(\overline{B}B-A)x=\overline{B}B x=\lambda x$ and therefore $|\lambda|\leq \rho(A)$ since the assumption \eqref{5} holds.
%The statement is clearly true from the above discussion.
%  \item[(ii)]
\begin{enumerate}
  \item
  We give the proof only to the case where $X\geq 0$; the other cases are left to the reader. It is easy to check that
    $|\lambda|< 1$ if  $x\not\in\mathrm{Ker}(Q)\cap\mathrm{Ker}(B^H\overline{Q}B)$. Otherwise, $Ax=\overline{B}B x+(\overline{B}B-A)x=\overline{B}B x=\lambda x$ and therefore $|\lambda|\leq \rho(A)$ since the assumption \eqref{5} holds.
The remaining statement is clearly true from the above discussion.
  \item
In the case of part~(b), it is easily to see that
\[
%\mathrm{Ker}(X_0)\cap
E_\lambda(\overline{B}B)\subseteq\mathrm{Ker}(Q)\cap\mathrm{Ker}(B^H\overline{Q}B)
\]
for all $\lambda\in\sigma(\overline{B}B)\cap\mathbb{D}_\leq$ by
using \eqref{8}. Therefore, $E_\lambda(\overline{B}B)=\{0\}$ and thus $\lambda\in\mathbb{D}_>$. We complete the proof of the part~(b). The same conclusion can be drawn for the part~(a).
\end{enumerate}

%\end{enumerate}
\end{proof}
In particular, it seems that the assumption $\mathbb{T}\neq\emptyset$ or $\mathbb{O}\neq\emptyset$  is not easy to check. The next proposition provides some criterions to use in the paper.
\begin{Proposition}\label{pro4}
\par\noindent
\begin{enumerate}
  \item
Assume that
\begin{align}\label{c3p3}
\mathcal{C}_{{T}_X}(Y)\geq {K}(Z,X),
\end{align}
for some $X\in\mathbb{P}$ and $Y,Z\in\mathbb{H}_n$. Then,
  \begin{enumerate}
  \item $X\in{\mathbb{T}}\cap{\mathbb{P}}$, when $Y\geq 0$ and $Z\in{\mathbb{T}}$.
  \item $X\in{\mathbb{O}}\cap{\mathbb{P}}$, when $Y\leq 0$ and $Z\in{\mathbb{O}}$.
\end{enumerate}
  \item
Assume that
\begin{align}\label{c3p4}
\mathcal{C}_{{T}_X}(Y)\leq {K}(Z,X),
\end{align}
for some $X\in\mathbb{E}$ and $Y,Z\in\mathbb{H}_n$. Then,
  \begin{enumerate}
  \item $X\in{\mathbb{T}}\cap{\mathbb{E}}$, when $Y\leq 0$ and $Z\in{\mathbb{T}}$.
  \item $X\in{\mathbb{O}}\cap{\mathbb{E}}$, when $Y\geq 0$ and $Z\in{\mathbb{O}}$.
\end{enumerate}
\end{enumerate}
\end{Proposition}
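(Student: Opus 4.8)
The plan is to reduce each of the four assertions to Lemma~\ref{lem2p} applied with its ambient matrix $B$ chosen to be $T_X$, so that $\overline{B}B=\widehat{T}_X$, and with the semidefinite datum $Q$ built from $K(Z,X)$. Throughout, note that the membership $X\in\mathbb{P}$ (resp. $X\in\mathbb{E}$) is part of the hypothesis, so the only thing left to establish is the spectral statement $\rho(\widehat{T}_X)<1$ (resp. $\mu(\widehat{T}_X)>1$), i.e. $X\in\mathbb{T}$ (resp. $X\in\mathbb{O}$).

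First I would fix the sign of $Q$ from the sign of $R_X$. In Part~1, $X\in\mathbb{P}$ gives $R_X>0$, whence $K(Z,X)=(F_Z-F_X)^HR_X(F_Z-F_X)\geq 0$; I set $Q:=K(Z,X)$ and read \eqref{c3p3} as $\mathcal{C}_{T_X}(Y)\geq Q$. In Part~2, $X\in\mathbb{E}$ gives $R_X<0$, so $-K(Z,X)=(F_Z-F_X)^H(-R_X)(F_Z-F_X)\geq0$; using $\mathcal{C}_{T_X}(-Y)=-\mathcal{C}_{T_X}(Y)$ I set $Q:=-K(Z,X)$ and read \eqref{c3p4} as $\mathcal{C}_{T_X}(-Y)\geq Q$. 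In either case the datum fed to Lemma~\ref{lem2p} is $\mathcal{C}_B(\pm Y)\geq Q\geq0$ with $B=T_X$, and the definiteness of $\pm Y$ matches the case: $+Y\geq0$ or $-Y\geq0$ in the stabilizing cases~(a), and $+Y\leq0$ or $-Y\leq0$ in the anti-stabilizing cases~(b).

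The crux is to verify the kernel inclusion \eqref{5} with the free matrix of Lemma~\ref{lem2p} taken to be $\widehat{T}_Z$, i.e.
\[
\mathrm{Ker}(Q)\cap\mathrm{Ker}(T_X^H\overline{Q}\,T_X)\subseteq\mathrm{Ker}(\widehat{T}_X-\widehat{T}_Z).
\]
Since $\pm R_X$ is definite, $\mathrm{Ker}(Q)=\mathrm{Ker}(F_Z-F_X)$ and $\mathrm{Ker}(\overline{Q})=\mathrm{Ker}(\overline{F_Z-F_X})$, so that $x\in\mathrm{Ker}(T_X^H\overline{Q}\,T_X)$ exactly when $\overline{(F_Z-F_X)}\,T_Xx=0$. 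For $x$ in the intersection I would use the elementary identity $T_X-T_Z=B(F_Z-F_X)$ together with its conjugate $\overline{T_X}-\overline{T_Z}=\overline{B}\,\overline{(F_Z-F_X)}$: the first kernel condition gives $T_Xx=T_Zx$, while the second gives $\overline{T_X}(T_Xx)=\overline{T_Z}(T_Xx)$; combining them yields $\widehat{T}_Xx=\overline{T_X}T_Xx=\overline{T_Z}T_Zx=\widehat{T}_Zx$, which is exactly the claimed inclusion. This is the step I expect to carry the whole argument, since it is what transfers the spectral information from $Z$ to $X$.

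With \eqref{5} in hand, part~1 of Lemma~\ref{lem2p} applies. In the stabilizing cases~(a), where $\pm Y\geq0$ and $Z\in\mathbb{T}$, it gives $\rho(\widehat{T}_X)\leq\min\{1,\rho(\widehat{T}_Z)\}=\rho(\widehat{T}_Z)<1$, so $X\in\mathbb{T}$ and hence $X\in\mathbb{T}\cap\mathbb{P}$ (Part~1) or $X\in\mathbb{T}\cap\mathbb{E}$ (Part~2). The anti-stabilizing cases~(b) need slightly more care: since $Z\in\mathbb{O}$ forces $\min\{1,\mu(\widehat{T}_Z)\}=1$, the bare bound from Lemma~\ref{lem2p} yields only $\mu(\widehat{T}_X)\geq1$, which is not strict. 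To recover $\mu(\widehat{T}_X)>1$ I would instead run the eigenvalue dichotomy behind \eqref{8} directly: for $\lambda\in\sigma(\widehat{T}_X)$ with eigenvector $x$, either $x\notin\mathrm{Ker}(Q)\cap\mathrm{Ker}(T_X^H\overline{Q}\,T_X)$, in which case the right-hand side of \eqref{8} is strictly positive while $x^H(\pm Y)x\leq0$, forcing $|\lambda|>1$; or $x$ lies in that intersection, whence by the inclusion just proved $\widehat{T}_Zx=\lambda x$, so $|\lambda|\geq\mu(\widehat{T}_Z)>1$. Either way $|\lambda|>1$, giving $\mu(\widehat{T}_X)>1$ and thus $X\in\mathbb{O}\cap\mathbb{P}$ (Part~1(b)) or $X\in\mathbb{O}\cap\mathbb{E}$ (Part~2(b)). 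The only genuine obstacle is the kernel inclusion; the strictness bookkeeping in cases~(b) is the secondary point to watch.
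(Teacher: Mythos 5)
Your proposal is correct and follows essentially the same route as the paper: the paper's own proof is exactly the reduction to part~1 of Lemma~\ref{lem2p} with $B=T_X$, $Q=\pm K(Z,X)$ and $A=\widehat{T}_Z$, via the kernel inclusion $\mathrm{Ker}(K(Z,X))\cap\mathrm{Ker}(T_X^H\overline{K(Z,X)}\,T_X)\subseteq\mathrm{Ker}(\widehat{T}_X-\widehat{T}_Z)$. If anything, your write-up is more careful than the paper's on the two points you flagged: the paper's displayed kernel chain passes from $\mathrm{Ker}(T_X-T_Z)$ straight to $\mathrm{Ker}(\widehat{T}_X-\widehat{T}_Z)$, although the cross term $(\overline{T_X}-\overline{T_Z})T_Z$ is annihilated only by the second condition $\mathrm{Ker}(T_X^H\overline{Q}\,T_X)$ as in your argument; and the paper's bare citation of part~1 of Lemma~\ref{lem2p} yields only $\mu(\widehat{T}_X)\geq\min\{1,\mu(\widehat{T}_Z)\}=1$ in the $\mathbb{O}$ cases, so the strict inequality $\mu(\widehat{T}_X)>1$ does require re-running the eigenvector dichotomy behind \eqref{8}, exactly as you do.
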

\begin{proof}
First, notice that from the definition of $K(X,Y)$ we find that $\mathrm{Ker}(K(X,Y))=\mathrm{Ker}(F_X-F_Y)\subseteq\mathrm{Ker}(BF_X-BF_Y)
=\mathrm{Ker}(T_X-T_Y)\subseteq\mathrm{Ker}(\overline{T_X}(T_X-T_Y)
+(\overline{T_X}-\overline{T_Y})T_Y)=\mathrm{Ker}(\widehat{T}_X-\widehat{T}_Y)$ for any $X,Y\in\mathrm{dom}(\mathcal{R})$.
The result is a immediate consequence of the part~1 of Theorem~\ref{lem2p}.
\end{proof}
An useful sufficient condition when $R>0$ and $H\geq 0$ to determine the shape of $\mathbb{T}$ or $\mathbb{O}$ can be written as follows.
\begin{Proposition}\label{pro2}
Assume that $R>0$ and $H\geq 0$. Let $X\in\mathbb{R}_\geq\cap\mathbb{N}_n$ (resp. $-\mathbb{N}_n$). Then,
%\begin{align}\label{cond1}
%\mathrm{rank} [\widehat{T}_X - \lambda I\ H] = n,\mbox{ for all }\lambda \in \sigma(\widehat{T}_X)\cap\mathbb{D}_\geq
%\end{align}
%holds for some $X\in \mathrm{dom}(\mathcal{R})$. Then,
$X\in\mathbb{T}$(resp.  $X\in\mathbb{O}$) if
\[
\mathrm{rank} [\widehat{T}_X - \lambda I\ H] = n
\]
for all $\lambda \in \sigma(\widehat{T}_X)\cap\mathbb{D}_\geq$ (resp.  $\mathbb{D}_\leq$).
In addition, $(\mathbb{R}_\geq\cap\mathbb{N}_n)\cup(\mathbb{R}_\geq\cap-\mathbb{N}_n)\subseteq\mathbb{T}$ if $H>0$.
\end{Proposition}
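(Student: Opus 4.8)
The plan is to reduce the spectral claim $\rho(\widehat{T}_X)<1$ to the Stein-type inequality that Lemma~\ref{lem2p} is designed to handle, and then to read the rank hypothesis as the eigenvector-exclusion (PBH/observability) test needed to close the argument. First I would specialize the identity \eqref{Req-a} of Proposition~\ref{lem2p1} to the feedback $F=F_X$, for which $\mathcal{K}_{F_X}(X)=0$ and $A_{F_X}=T_X$. This gives
\[
X-\mathcal{R}(X)=\mathcal{C}_{T_X}(X)-\left(H+F_X^H R F_X\right).
\]
Since $X\in\mathbb{R}_\geq$ the left-hand side is positive semidefinite, and $R>0$ forces $F_X^H R F_X\geq 0$, so that $\mathcal{C}_{T_X}(X)\geq H\geq 0$. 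This is precisely the hypothesis $\mathcal{C}_B(X)\geq Q$ of Lemma~\ref{lem2p} with $B=T_X$ and $Q=H$, for which $\overline{B}B=\widehat{T}_X$; note that this reduction uses neither the sign of $X$ nor any further assumption, so it serves both the $\mathbb{N}_n$ and the $-\mathbb{N}_n$ case at once.

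With this in hand I would invoke part~2 of Lemma~\ref{lem2p}. For $X\in\mathbb{N}_n$ (the case $X\geq 0$) it delivers $\rho(\widehat{T}_X)<1$ once the triple intersection $\mathrm{Ker}(H)\cap\mathrm{Ker}(T_X^H\overline{H}T_X)\cap E_\lambda(\widehat{T}_X)$ is shown to be trivial for every $\lambda\in\mathbb{D}_\geq$, and for $X\in-\mathbb{N}_n$ (the case $X\leq 0$) it delivers $\mu(\widehat{T}_X)>1$ under the analogous requirement over $\mathbb{D}_\leq$. Since that triple intersection is contained in $\mathrm{Ker}(H)\cap E_\lambda(\widehat{T}_X)$, it suffices to trivialize the latter for $\lambda\in\sigma(\widehat{T}_X)$ in the relevant disk region (it is automatically $\{0\}$ when $\lambda\notin\sigma(\widehat{T}_X)$). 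A nonzero $x$ in $\mathrm{Ker}(H)\cap E_\lambda(\widehat{T}_X)$ would satisfy $\widehat{T}_X x=\lambda x$ and $Hx=0$ simultaneously, i.e.\ it would be a nontrivial common null vector of the pair $(\widehat{T}_X,H)$; excluding such $x$ for all $\lambda\in\sigma(\widehat{T}_X)\cap\mathbb{D}_\geq$ (resp.\ $\mathbb{D}_\leq$) is exactly the content of the rank hypothesis, and this closes the main statement.

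The step I expect to be the main obstacle is making sure the rank hypothesis is applied in the form that excludes \emph{right} eigenvectors of $\widehat{T}_X$. The inequality \eqref{8} in the proof of Lemma~\ref{lem2p} simplifies to $(1-|\lambda|^2)(x^H X x)\geq x^HQx+x^HB^H\overline{Q}Bx$ only when $x$ is a right eigenvector of $\overline{B}B=\widehat{T}_X$, so the eigenvectors that must be controlled are right eigenvectors lying in $\mathrm{Ker}(H)$; the hypothesis must therefore be read as the observability-type condition $\mathrm{Ker}(H)\cap E_\lambda(\widehat{T}_X)=\{0\}$ (full column rank of the stacked pair), which for Hermitian $H$ is the reading compatible with $\widehat{T}_X$ rather than $\widehat{T}_X^H$. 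In carrying this out I would also record that $\sigma(\widehat{T}_X)$ is conjugation symmetric, since $\sigma(\overline{T_X}T_X)=\sigma(T_X\overline{T_X})=\overline{\sigma(\widehat{T}_X)}$, so that the critical sets $\sigma(\widehat{T}_X)\cap\mathbb{D}_\geq$ and $\sigma(\widehat{T}_X)\cap\mathbb{D}_\leq$ are themselves closed under conjugation. Finally, for the addendum I would specialize to $H>0$: then $\mathrm{Ker}(H)=\{0\}$, the kernel condition holds vacuously for every $\lambda$, and the two cases give $\rho(\widehat{T}_X)<1$ for $X\in\mathbb{R}_\geq\cap\mathbb{N}_n$ and $\mu(\widehat{T}_X)>1$ for $X\in\mathbb{R}_\geq\cap-\mathbb{N}_n$; since no matrix can satisfy both $\rho<1$ and $\mu>1$ (recall $\rho\geq\mu$), the negative part necessarily lands in $\mathbb{O}$ (and is empty whenever one insists on $\mathbb{T}$), which is how I would interpret the union assertion.
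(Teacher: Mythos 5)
Your proof is correct and is essentially the paper's own argument: both reduce $X\in\mathbb{R}_\geq$ to the Stein inequality $\mathcal{C}_{T_X}(X)\geq H\geq 0$ (your specialization $F=F_X$ in \eqref{Req-a} coincides with the paper's inequality, since $(\overline{X}T_X)^H G(\overline{X}T_X)=F_X^H R F_X$), and then invoke part~2 of Lemma~\ref{lem2p} with $B=T_X$, $Q=H$, so $\overline{B}B=\widehat{T}_X$. The two subtleties you flag are genuine imprecisions that the paper's one-line appeal to Lemma~\ref{lem2p} glosses over, and your resolutions are the correct ones: the rank hypothesis must indeed be read as the right-eigenvector condition $\mathrm{Ker}(H)\cap E_\lambda(\widehat{T}_X)=\{0\}$ (the side-by-side full-row-rank form would control left eigenvectors of $\widehat{T}_X$, which is not what the proof of Lemma~\ref{lem2p} uses), and for $H>0$ the branch $\mathbb{R}_\geq\cap(-\mathbb{N}_n)$ lands in $\mathbb{O}$ rather than $\mathbb{T}$, so the printed union inclusion into $\mathbb{T}$ must be interpreted as you do (the paper's proof treats only the first case and never addresses the addendum).
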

\begin{proof}
We only need to consider the first case. Note that $X\in \mathbb{R}_\geq$ is equivalent to
\begin{align*}
 \mathcal{C}_{T_X}(X)\geq (\bar{X}T_X)^H G (\bar{X}T_X)+H.
 \end{align*}
 Therefore, $X\geq 0$ satisfying $\mathcal{C}_{T_X}(X)\geq H \geq 0$. The first conclusion immediately follows by the part~2 of Theorem~\ref{lem2p}.
\end{proof}
\subsection{Two auxiliary matrix equations}
This subsection concerns two auxiliary matrix equations, as we will later on prove that the existence of some extremal solution.
\subsubsection{The dual-CDARE }
\begin{Definition}\label{def}
Assume that $A$ is nonsingular and $\overline{H}_A\in\mathrm{dom}(\mathcal{R})$, where $H_A:=A^{-H} H A^{-1}$.
Let $\mathbb{S}_n:=\mathbb{C}^{n\times n}\times\mathbb{H}_n\times\mathbb{H}_n$. The definition of the matrix operator $\mathcal{F}:\mathbb{S}_n\rightarrow\mathbb{S}_n$ is $\mathcal{F}(A ,G, H)=(\widetilde{A},\widetilde{G},\widetilde{H})$ for any $(A,G,H)\in\mathbb{S}_n$, where
\begin{subequations}\label{AGH}
\begin{align}
 %\mathcal{F}_A =
 \widetilde{A}&:= A^{-1}(I+GH_A)^{-1},\label{A1}\\
  %\mathcal{F}_G =
  \widetilde{G}&:= \widetilde{A}G {A}^{-H}(\equiv{A}^{-1}G \widetilde{A}^{H}),\label{G1}\\
 %\mathcal{F}_H =
 \widetilde{H}&:= \widetilde{A}^H H A^{-1}(\equiv A^{-H} H \widetilde{A}).\label{H1}
\end{align}
%\begin{align}
% \mathcal{F}_A &=\widetilde{A}= A^{-1}(I+GH_A)^{-1},\label{A1} \\
%  \mathcal{F}_G &=\widetilde{G}= \widetilde{A}G {A}^{-H}={A}^{-1}G \widetilde{A}^{H},\label{G1} \\
% \mathcal{F}_H &=\widetilde{H}= \widetilde{A}^H H A^{-1}=A^{-H} H \widetilde{A}.\label{H1}
%\end{align}
\end{subequations}
\end{Definition}
By using Definition~\ref{def}, it is easily checked that
$\mathcal{F}$ is an involution operator. That is, $\mathcal{F}^{(2)}= \mathcal{F}\circ\mathcal{F}$ is the identity map on $\mathbb{S}_n$. Indeed, we introduce the notation $$(\mathcal{F}^{(2)}_A,\mathcal{F}^{(2)}_G,\mathcal{F}^{(2)}_H):=\mathcal{F}^{(2)}(A ,G, H)=\mathcal{F}(\widetilde{A},\widetilde{G},\widetilde{H}).$$
Then, we have
\begin{align*}
  &\mathcal{F}^{(2)}_A =\widetilde{A}(I+\widetilde{G}\widetilde{H}_{\widetilde{A}})^{-1}=
(I+GH_A)A(I+\widetilde{A}GA^{-H}H+\widetilde{A}GA^{-H}HA^{-1}GA^{-H}H)^{-1}\\
&=(I+GH_A)A(I+GA^{-H}H+GA^{H}HA^{-1}GA^{-H}H)^{-1}\widetilde{A}^{-1}\\
&=(I+GH_A)(I+GH_A+GA^{-H}H^{-1}A^{-1}+GH_AGH_A)^{-1}(I+GH_A)A=A,\\
&\mathcal{F}^{(2)}_G =\mathcal{F}^{(2)}_A\widetilde{G}\widetilde{A}^{H}
=AA^{-1}(I+GH_A)^{-1}GA^{-H}A^H(I+H_AG)=G,\\
&\mathcal{F}^{(2)}_H=(\mathcal{F}^{(2)}_A)^H \widetilde{H} \widetilde{A}^{-1}=A^H(A^{-H}HA^{-1})(I+GH_A)A=H,
\end{align*}
by using
\begin{align*}
\widetilde{H}_{\widetilde{A}}=(A^H(I+H_AG))[(A^{-1}(I+GH_A)^{-1})^HHA^{-1}]((I+GH_A)A)
=H(I+A^{-1}GA^{-H}H)=H+HG_{A^H}H.
\end{align*}
The idea behind the coefficient matrices \eqref{AGH} provides the formulation of the dual-CDARE :
   \begin{align}\label{dual}
{Y}=\overline{\widetilde{\mathcal{R}}}(Y)
   \end{align}
   with
   $$\widetilde{\mathcal{R}}(Y):=\widetilde{H}+\widetilde{A}^H Y(I+\widetilde{G}Y)^{-1}\widetilde{A}\equiv  \widetilde{H}+\widetilde{A}^H {Y}\widetilde{A} - \widetilde{A}^H {Y}\widetilde{B}(\widetilde{R}+\widetilde{B}^H {Y}\widetilde{B})^{-1} \widetilde{B}^H {Y}\widetilde{A},$$
    where $\widetilde{B} = A^{-1}B$ and
   $\widetilde{R}= R + B^H  H_A B$, which play an important role throughout the paper. According to Definition~\ref{def}, the following important feature for the dual-CDARE~ \eqref{dual} can be readily obtained.
%   To limit the length of the paper and make the main body of the paper readable, we state the results but without proofs.
\begin{Proposition}\label{pro3}
Assume that $A$ is nonsingular. Then,
%$\mathbb{O}\neq\phi$.
\begin{enumerate}
  \item $\overline{\widetilde{H}}_{\widetilde{A}}\in\mathrm{dom}(\widetilde{\mathcal{R}})$ if and only if  $\overline{H}_A\in\mathrm{dom}(\mathcal{R})$, where $\widetilde{H}_{\widetilde{A}}:=\widetilde{A}^{-H}\widetilde{H}\widetilde{A}^{-1}$.
  \item
   Assume that $\overline{H}_A\in\mathrm{dom}(\mathcal{R})$. Then,
   \begin{enumerate}
   \item
   $\mathcal{F}$ is an involution function, i.e., $\mathcal{F}^{(2)}=I_{\mathbb{S}_n}$, which is the identity mapping on $\mathbb{S}_n$. In other words,
  \begin{align*}
 {A}&= \widetilde{A}^{-1}(I+\widetilde{G}\widetilde{H}_{\widetilde{A}})^{-1}, \\
 {G}&= {A}\widetilde{G}{\widetilde{A}}^{-H}=\widetilde{A}^{-1}\widetilde{G} {A}^{H}, \\
 {H}&= {A}^H \widetilde{H} \widetilde{A}^{-1}=\widetilde{A}^{-H} \widetilde{H} {A}.
\end{align*}
  \item
  %$(-\mathcal{R})\circ\mathcal{D}=\mathcal{D}\circ(-\mathcal{R})=I$.
$X\in\mathrm{dom}(\mathcal{R})$ if and only if $\mathcal{R}(X)\in\mathrm{dom}(\widetilde{\mathcal{R}})$. Moreover,
\begin{align*}
(\widetilde{\mathcal{R}}\circ(-\mathcal{R}))(X)=-\overline{X},\,\forall X\in\mathrm{dom}(\mathcal{R}),\\
(\mathcal{R}\circ(-\overline{\widetilde{\mathcal{R}}}))(Y)=-{Y},\,\forall Y\in\mathrm{dom}(\widetilde{\mathcal{R}}).
\end{align*}
\item
The matrix operator  $\mathcal{R}^{-1}:-\mathrm{dom}(\widetilde{\mathcal{R}})\rightarrow \mathbb{H}_n$ is defined by
    $\mathcal{R}^{-1}(Y)=-\overline{\widetilde{\mathcal{R}}}(-Y)$ for any $Y\in -\mathrm{dom}(\widetilde{\mathcal{R}})$, satisfying $\mathcal{R}^{-1}\circ\mathcal{R}=I$. Similarly, the matrix operator  $\widetilde{\mathcal{R}}^{-1}:-\mathrm{dom}(\mathcal{R})    \rightarrow \mathbb{H}_n $ is defined by
    $\widetilde{\mathcal{R}}^{-1}(X)=-\mathcal{R}(-\overline{X})$ for any $X\in \mathrm{dom}(\mathcal{R})$, satisfying $\widetilde{\mathcal{R}}^{-1}\circ\widetilde{\mathcal{R}}=I$.
\end{enumerate}
\end{enumerate}
\end{Proposition}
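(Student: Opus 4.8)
The plan is to dispatch the four assertions in the order 1, 2(a), 2(b), 2(c), leaning on Definition~\ref{def}, on the conjugation-invariance of the determinant, and on the direct computation of $\mathcal{F}^{(2)}$ already carried out after Definition~\ref{def}.

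For part~1 I would first record the closed forms $\widetilde{A}^{-1}=(I+GH_A)A$ and $\widetilde{A}^H=(I+H_AG)^{-1}A^{-H}$, so that $\widetilde{H}=(I+H_AG)^{-1}H_A$ and (as already computed before the statement) $\widetilde{H}_{\widetilde{A}}=H+HA^{-1}GA^{-H}H$; the identity $A^{-H}H=H_AA$ will be used repeatedly. Substituting $\widetilde{G}=A^{-1}(I+GH_A)^{-1}GA^{-H}$ and this $\widetilde{H}_{\widetilde{A}}$ and factoring $A$ on the right, one brings $\widetilde{G}\widetilde{H}_{\widetilde{A}}$ into the form $A^{-1}(I+GH_A)^{-1}(GH_A)(I+GH_A)A$; two successive uses of $\det(I+P^{-1}SP)=\det(I+S)$ (conjugation by $A$, then by $I+GH_A$) then give $\det(I+\widetilde{G}\widetilde{H}_{\widetilde{A}})=\det(I+GH_A)$. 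Hence both domain conditions reduce to the single requirement $\det(I+GH_A)\neq 0$, i.e.\ $\overline{H}_A\in\mathrm{dom}(\mathcal{R})$, proving the equivalence; the only fuss is placing the complex conjugations demanded by the two domain definitions, which I would absorb using $\det(\overline{M})=\overline{\det M}$.

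Part~2(a) is then immediate: by part~1 both $(A,G,H)$ and $(\widetilde{A},\widetilde{G},\widetilde{H})$ lie where $\mathcal{F}$ is defined, so every inverse in the pre-statement computation exists and that computation --- which already yields $\mathcal{F}^{(2)}_A=A$, $\mathcal{F}^{(2)}_G=G$, $\mathcal{F}^{(2)}_H=H$ --- establishes $\mathcal{F}^{(2)}=I_{\mathbb{S}_n}$. The substance is part~2(b). Fixing $X\in\mathrm{dom}(\mathcal{R})$, I set $P:=\overline{X}(I+G\overline{X})^{-1}$ and $Y:=-\mathcal{R}(X)=-H-A^HPA$. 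Using $A^{-H}H=H_AA$ one finds $\widetilde{A}^HY=-(I+H_AG)^{-1}(H_A+P)A$ and $\widetilde{G}Y=-A^{-1}(I+GH_A)^{-1}G(H_A+P)A$, whence
\[
I+\widetilde{G}Y=A^{-1}(I+GH_A)^{-1}(I-GP)A.
\]
The linchpin is the elementary collapse $I-GP=I-G\overline{X}(I+G\overline{X})^{-1}=(I+G\overline{X})^{-1}$, which simultaneously shows that $I+\widetilde{G}Y$ is invertible precisely when $\det(I+G\overline{X})\neq 0$ --- i.e.\ exactly when $X\in\mathrm{dom}(\mathcal{R})$, giving the claimed domain equivalence --- and delivers the clean inverse $(I+\widetilde{G}Y)^{-1}=A^{-1}(I+G\overline{X})(I+GH_A)A$. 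Substituting into $\widetilde{\mathcal{R}}(Y)=\widetilde{H}+\widetilde{A}^HY(I+\widetilde{G}Y)^{-1}\widetilde{A}$, the factors $A^{\pm1}$ and $(I+GH_A)^{\pm1}$ telescope, $P(I+G\overline{X})=\overline{X}$, and adding $\widetilde{H}=(I+H_AG)^{-1}H_A$ collapses everything to $-(I+H_AG)^{-1}(I+H_AG)\overline{X}=-\overline{X}$; this is $(\widetilde{\mathcal{R}}\circ(-\mathcal{R}))(X)=-\overline{X}$.

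The second composition identity $(\mathcal{R}\circ(-\overline{\widetilde{\mathcal{R}}}))(Y)=-Y$ I would obtain by the identical computation run on the dual triple $(\widetilde{A},\widetilde{G},\widetilde{H})$, whose $\mathcal{F}$-image is $(A,G,H)$ by part~2(a), taking care of the conjugate in $\overline{\widetilde{\mathcal{R}}}$. Part~2(c) is then a formality: with $\mathcal{R}^{-1}(Y):=-\overline{\widetilde{\mathcal{R}}}(-Y)=-\overline{\widetilde{\mathcal{R}}(-Y)}$ the first identity gives $\mathcal{R}^{-1}(\mathcal{R}(X))=-\overline{\widetilde{\mathcal{R}}(-\mathcal{R}(X))}=-\overline{-\overline{X}}=X$, and the check for $\widetilde{\mathcal{R}}^{-1}$ uses the second identity, the relevant domains $-\mathrm{dom}(\widetilde{\mathcal{R}})$ and $-\mathrm{dom}(\mathcal{R})$ being exactly those identified in part~2(b). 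The step I expect to be the real obstacle is not the algebra --- which is short once $I-GP=(I+G\overline{X})^{-1}$ is in hand --- but the disciplined bookkeeping of the bars and signs in part~2(b): in particular reconciling the statement's ``$\mathcal{R}(X)\in\mathrm{dom}(\widetilde{\mathcal{R}})$'' with the $-\mathcal{R}(X)$ that actually enters the composition, and positioning every conjugate correctly in the second identity.
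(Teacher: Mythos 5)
Your proposal is correct, and for part~1, part~2(a) and the first composition identity it is essentially the paper's own argument: part~1 rests on exhibiting $I+\widetilde{G}\widetilde{H}_{\widetilde{A}}$ as a similarity transform of $I+GH_A$ (the paper gets the cleaner form $I+\widetilde{G}\widetilde{H}_{\widetilde{A}}=I+A^{-1}GA^{-H}H=A^{-1}(I+GH_A)A$ directly, while your conjugation by $(I+GH_A)A$ reaches the same determinant identity); part~2(a) is in both cases just an appeal to the computation preceding Definition~\ref{def}'s involution claim; and your explicit factorization $I+\widetilde{G}Y=A^{-1}(I+GH_A)^{-1}(I+G\overline{X})^{-1}A$, obtained from the collapse $I-GP=(I+G\overline{X})^{-1}$, is exactly the paper's inverse $(I-\widetilde{G}\mathcal{R}(X))^{-1}=A^{-1}(I+G\overline{X})\widetilde{A}^{-1}$, which the paper derives instead via the SMWF --- the algebra is interchangeable, and your telescoping to $-\overline{X}$ checks out. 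The one genuine divergence is the second identity $(\mathcal{R}\circ(-\overline{\widetilde{\mathcal{R}}}))(Y)=-Y$: the paper runs a second, independent SMWF computation (inverting $I-G\widetilde{\mathcal{R}}(Y)$ from $I-G\widetilde{H}=(I+GH_A)^{-1}$), whereas you deduce it from the first identity by duality. That shortcut is sound, but to make it rigorous you need to record two facts you only gesture at: (i) $\overline{\widetilde{\mathcal{R}}}$, viewed as a map of $Y$, is itself a conjugate-type Riccati operator, namely the one with coefficient triple $(\overline{\widetilde{A}},\overline{\widetilde{G}},\overline{\widetilde{H}})$; and (ii) $\mathcal{F}$ commutes with entrywise conjugation, so that by part~2(a) this triple's $\mathcal{F}$-image is $(\overline{A},\overline{G},\overline{H})$ and its dual operator is $Z\mapsto\overline{\mathcal{R}(\overline{Z})}$. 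Applying the already-proved first identity to that conjugated dual triple and conjugating both sides then yields $\mathcal{R}(-\overline{\widetilde{\mathcal{R}}}(Y))=-Y$ on precisely $\mathrm{dom}(\widetilde{\mathcal{R}})$; this buys you one whole computation at the cost of the conjugation bookkeeping you yourself flagged as the delicate point. Part~2(c) you dispatch formally from 2(b), which is also all the paper does (it in fact omits 2(c) entirely).
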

\begin{proof}
\par\noindent
\begin{enumerate}
\item
This result follows immediately from
\begin{align*}
I+\widetilde{G}\widetilde{H}_{\widetilde{A}}=I+A^{-1}G\widetilde{A}^H
\widetilde{A}^{-H}\widetilde{H}\widetilde{A}^{-1}=I+A^{-1}GA^{-H}H=A^{-1}(I+GH_A)A.
\end{align*}

%Substituting the formulas of ~\eqref{AGH} into the coefficients $(A,G,H)$, one
%yields
%\begin{align*}
%I-\widetilde{G}{H}&=
%I-\widetilde{A}GA^{-H}{H}
%=\widetilde{A}((I+GH_A)A-GA^{-H}{H})
%\\
%&=A^{-1}(I+G H_A)^{-1}A=(I+G_{A^H}H)^{-1}=\widetilde{A}A,\\
%I-G\widetilde{H}&=I-G(I+H_AG)^{-1}A^{-H}HA^{-1}=(I+GH_A)^{-1}\\
%&=A\widetilde{A}.
%\end{align*}
%The result is immediately
%obtained. %It is noticed that
%\[
%
%\]
  \item
 \begin{enumerate}
   \item
The result has been proved previously.
   %We introduce the notation $$\mathcal{F}^{(2)}(A ,G, H)=\mathcal{F}(\mathcal{F}_A ,\mathcal{F}_G, \mathcal{F}_H):=(\mathcal{F}^{(2)}_A,\mathcal{F}^{(2)}_G,\mathcal{F}^{(2)}_H).$$
%Notice that
%\begin{align*}
%\widetilde{H}_{\widetilde{A}}&=(A^H(I+H_AG))[(A^{-1}(I+GH_A)^{-1})^HHA^{-1}]((I+GH_A)A)\\
%&=H(I+A^{-1}GA^{-H}H)=H+HG_{A^H}H.
%\end{align*}
%Then, we have
%\begin{align*}
%  &\mathcal{F}^{(2)}_A =\widetilde{A}(I+\widetilde{G}\widetilde{H}_{\widetilde{A}})^{-1}=
%(I+GH_A)A\\
%&(I+\widetilde{A}GA^{-H}H+\widetilde{A}GA^{-H}HA^{-1}GA^{-H}H)^{-1}\\
%&=(I+GH_A)A(I+GA^{-H}H+GA^{H}HA^{-1}GA^{-H}H)^{-1}\widetilde{A}^{-1}\\
%&=(I+GH_A)(I+GH_A+GA^{-H}H^{-1}A^{-1}+GH_AGH_A)^{-1}(I+GH_A)A\\
%&=A,\\
%\mathcal{F}^{(2)}_G &=\mathcal{F}^{(2)}_A\widetilde{G}\widetilde{A}^{H}
%=AA^{-1}(I+GH_A)^{-1}GA^{-H}A^H(I+H_AG)=G,\\
%\mathcal{F}^{(2)}_H &=(\mathcal{F}^{(2)}_A)^H \widetilde{H} \widetilde{A}^{-1}=A^H(A^{-H}HA^{-1})(I+GH_A)A=H.
%\end{align*}
 \item
Note that %$I-\widetilde{G} H$ is invertible since $H_A\in\mathrm{dom}(\mathcal{R})$ and two identities
\begin{align*}
  &I+G\overline{X}-A(I-\widetilde{G} H)^{-1}\widetilde{G}A^H\overline{X}=I+(G-\widetilde{A}^{-1}\widetilde{G}A^H)\overline{X}=I,\\
&(I-\widetilde{G}H)^{-1}=A^{-1}\widetilde{A}^{-1}.
\end{align*}
Therefore, $I-\widetilde{G}\mathcal{R}(X)$ is invertible by SMWF and its inversion can be presented in the following form:
%\begin{align*}
%&(I-\widetilde{G}\mathcal{R}(X))^{-1}=\left(I-\widetilde{G}H-\widetilde{G}A^{H}\overline{X}(I+G\overline{X})^{-1}A\right)^{-1},\\
%&(I-\widetilde{G}H)^{-1}=A^{-1}\widetilde{A}^{-1}.
%\end{align*}
%We have
\begin{align*}
&(I-\widetilde{G}\mathcal{R}(X))^{-1}=\left(I-\widetilde{G}H-\widetilde{G}A^{H}\overline{X}(I+G\overline{X})^{-1}A\right)^{-1}
=A^{-1}\widetilde{A}^{-1}+A^{-1}\widetilde{A}^{-1}\widetilde{G}A^{H}\overline{X}(I)^{-1}AA^{-1}\widetilde{A}^{-1}\\
&=A^{-1}(I+G\overline{X})\widetilde{A}^{-1}.
\end{align*}
Thus,
\begin{align*}
&(\widetilde{\mathcal{R}}\circ(-\mathcal{R}))(X)
=\widetilde{H}
-\widetilde{A}^H\mathcal{R}(X)(I-\widetilde{G}\mathcal{R}(X))^{-1}\widetilde{A}
=\widetilde{A}^H(H-\mathcal{R}(X)A^{-1}(I+G\overline{X})A)A^{-1}\\
&=\widetilde{A}^H(H-HA^{-1}(I+G\overline{X})A-A^H\overline{X}A)A^{-1}
=\widetilde{A}^H(-A^H(I+H_AG)\overline{X}A)A^{-1}=-\overline{X}.
\end{align*}
Note that
\begin{align*}
  &I-G\widetilde{H}=I-G(I+H_AG)^{-1}A^{-H}HA^{-1}=(I+GH_A)^{-1},\\
  &I+\widetilde{G}{Y}-\widetilde{A}(I-{G} \widetilde{H})^{-1}{G}\widetilde{A}^H{Y}=I+(\widetilde{G}-{A}^{-1}{G}\widetilde{A}^H){Y}=I.
\end{align*}
Therefore, $I-{G}\widetilde{\mathcal{R}}(Y)$ is invertible by SMWF and its inversion can be presented in the following form:
%\begin{align*}
%&(I-\widetilde{G}\mathcal{R}(Y))^{-1}=\left(I-\widetilde{G}H-\widetilde{G}A^{H}\overline{Y}(I+G\overline{Y})^{-1}A\right)^{-1},\\
%&(I-\widetilde{G}H)^{-1}=A^{-1}\widetilde{A}^{-1}.
%\end{align*}
%We have
\begin{align}
&(I-{G}\widetilde{\mathcal{R}}(Y))^{-1}=\left(I-G\widetilde{H}-{G}\widetilde {A}^{H}{Y}(I+\widetilde{G}{Y})^{-1}\widetilde{A}\right)^{-1}
=A^{-1}\widetilde{A}^{-1}+A^{-1}\widetilde{A}^{-1}\widetilde{G}A^{H}\overline{Y}(I)^{-1}AA^{-1}\widetilde{A}^{-1}\nonumber\\
&=A^{-1}(I+G\overline{Y})\widetilde{A}^{-1}.\label{231}
\end{align}
Taking into account \eqref{231} we have
\begin{align*}
&(\mathcal{R}\circ(-\overline{\widetilde{\mathcal{R}}}))(Y)={H}-{A}^H\widetilde{\mathcal{R}}(Y)(I-{G}\widetilde{\mathcal{R}}(Y))^{-1}{A}
={H}-{A}^H\widetilde{\mathcal{R}}(Y)\widetilde{A}^{-1}(I+\widetilde{G}(Y))\\
&=H-A^H(\widetilde{H}+\widetilde{A}^HY(I+\widetilde{G}Y)^{-1}\widetilde{A})\widetilde{A}^{-1}(I+\widetilde{G}(Y))
=H-A^H\widetilde{H}\widetilde{A}^{-1}(I+\widetilde{G}Y)-A^H\widetilde{A}^HY\\
&=-(H\widetilde{G}+A^H\widetilde{A}^H)Y=-{Y}.
\end{align*}
\end{enumerate}
\end{enumerate}
This completes the proof.
\end{proof}
\subsubsection{The transformed-DARE }
In analogy with the technique in \cite{CHIANG202471,l.c18}, if $\det(R_H)\neq 0$, then the CDARE~\eqref{cdare-b}
can be transformed into a discrete-time algebraic Riccati equation (DARE) of the form
\begin{subequations}\label{coeff-dare2}
\begin{align}
X = \widehat{\mathcal{R}}(X) &:= \mathcal{R}(\mathcal{R}(X)) = \widehat{A}^H {X}\widehat{A} - \widehat{A}^H {X}\widehat{B}(\widehat{R}+
\widehat{B}^H {X}\widehat{B})^{-1} \widehat{B}^H {X} \widehat{A} + \widehat{H}, \label{dare2-a} %\\
%&= \widehat{A}^H X (I + \widehat{G} X)^{-1}\widehat{A} + \widehat{H}, \label{dare2-b}
\end{align}
%\end{subequations}
where its coefficient matrices corresponding the standard form of CDARE~\eqref{cdare-a} are given by
  \begin{align}
\widehat{A}&=\overline{A}T_H=\overline{A}(A-BF_H),
\widehat{B} =\bb \overline{B} & \overline{A}B \eb, \label{hatA} \widehat{R} = \overline{R}\oplus R_H, \\
\widehat{G}&= \widehat{B} \widehat{R}^{-1} \widehat{B}^H = \overline{G}+\overline{A} (I+G\overline{H})^{-1}{G}\overline{A}^H,
\widehat{H} = H+{A}^H \overline{H}(I+G\overline{H})^{-1}{A}. \label{hatG} %\label{hatH}
\end{align}
\end{subequations}
The following lemma characterizes a useful identity depending on the matrix operator $\widehat{\mathcal{R}}(\cdot)$ and its
associated Stein operator.
\begin{Lemma}\label{lemma23}\cite{MR4794239}
Assume that $H\in\mathrm{dom}({\mathcal{R}})$.
 For any $X,Y\in\mathrm{dom}(\widehat{\mathcal{R}})$, we have
  \begin{align}
  X-\widehat{\mathcal{R}}(X)&=\mathcal{S}_{\widehat{T}_{Y}}(X)-\widehat{H}+(\widehat{\mathcal{K}}(Y,X)-\widehat{\mathcal{K}}(Y,0)) , \label{Req-c}
  \end{align}
where $\widehat{\mathcal{K}}(Y,X) :=(\widehat{F}-\widehat{F}_{X})^H\widehat{R}_X(\widehat{F}-\widehat{F}_{X})$.
\end{Lemma}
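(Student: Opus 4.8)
The plan is to read \eqref{Req-c} as the ordinary discrete-time (non-conjugate) analogue of the identity \eqref{Req-b} in Proposition~\ref{lem2p1}, and to derive it by the same completing-the-square manipulation, now applied to the genuine DARE $\widehat{\mathcal{R}}$. Since $X,Y\in\mathrm{dom}(\widehat{\mathcal{R}})$, the matrices $\widehat{R}_X=\widehat{R}+\widehat{B}^H X\widehat{B}$ and $\widehat{R}_Y$ are invertible, so $\widehat{F}_X=\widehat{R}_X^{-1}\widehat{B}^H X\widehat{A}$ and the closed-loop matrix $\widehat{T}^{(D)}_Y=\widehat{A}-\widehat{B}\widehat{F}_Y$ (the matrix appearing in the Stein term of \eqref{Req-c}) are well defined. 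First I would prove the more flexible identity
\begin{equation*}
X-\widehat{\mathcal{R}}(X)=\mathcal{S}_{\widehat{A}-\widehat{B}F}(X)-\widehat{H}_F+\widehat{\mathcal{K}}_F(X),
\end{equation*}
valid for every matrix $F$ of compatible size, and then specialise to $F=\widehat{F}_Y$.

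The core computation rewrites the Riccati term in completing-the-square form. Using the Hermiticity of $X$ and $\widehat{R}_X$ together with $\widehat{B}^H X\widehat{A}=\widehat{R}_X\widehat{F}_X$, $\widehat{A}^H X\widehat{B}=\widehat{F}_X^H\widehat{R}_X$ and $\widehat{B}^H X\widehat{B}=\widehat{R}_X-\widehat{R}$, the closed form \eqref{dare2-a} collapses to $\widehat{\mathcal{R}}(X)=\widehat{A}^H X\widehat{A}-\widehat{F}_X^H\widehat{R}_X\widehat{F}_X+\widehat{H}$. Expanding the Stein operator $\mathcal{S}_{\widehat{A}-\widehat{B}F}(X)=X-(\widehat{A}-\widehat{B}F)^H X(\widehat{A}-\widehat{B}F)$ and comparing with $\widehat{\mathcal{K}}_F(X)=(F-\widehat{F}_X)^H\widehat{R}_X(F-\widehat{F}_X)$, every term linear in $F$ cancels against the corresponding cross term of $\widehat{\mathcal{K}}_F(X)$, while the $F^H\widehat{R}F$ contribution is absorbed into $\widehat{H}_F=\widehat{H}+F^H\widehat{R}F$. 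What remains is exactly $X-\widehat{A}^H X\widehat{A}-\widehat{H}+\widehat{F}_X^H\widehat{R}_X\widehat{F}_X=X-\widehat{\mathcal{R}}(X)$, which is the general-$F$ identity.

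To finish, I would set $F=\widehat{F}_Y$, so that $\widehat{A}-\widehat{B}F=\widehat{T}^{(D)}_Y$ and $\widehat{\mathcal{K}}_F(X)=\widehat{\mathcal{K}}(Y,X)=(\widehat{F}_Y-\widehat{F}_X)^H\widehat{R}_X(\widehat{F}_Y-\widehat{F}_X)$, giving $X-\widehat{\mathcal{R}}(X)=\mathcal{S}_{\widehat{T}^{(D)}_Y}(X)-\widehat{H}_{\widehat{F}_Y}+\widehat{\mathcal{K}}(Y,X)$. The last step is to rewrite the constant term: evaluating at $X=0$ yields $\widehat{F}_0=0$ and $\widehat{R}_0=\widehat{R}$ from the definitions in Table~\ref{Fx2}, hence $\widehat{\mathcal{K}}(Y,0)=\widehat{F}_Y^H\widehat{R}\,\widehat{F}_Y$, and therefore $\widehat{H}_{\widehat{F}_Y}=\widehat{H}+\widehat{F}_Y^H\widehat{R}\,\widehat{F}_Y=\widehat{H}+\widehat{\mathcal{K}}(Y,0)$. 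Substituting this into the previous line produces precisely \eqref{Req-c}.

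I do not expect a genuine obstacle: once the Riccati term is recognised in the form $\widehat{F}_X^H\widehat{R}_X\widehat{F}_X$, the argument is pure bookkeeping and mirrors the proof of \eqref{Req-b} (and of \eqref{Req-a}) with the conjugation removed. The only points requiring mild care are the Hermiticity of $\widehat{R}_X$, needed so that $\widehat{A}^H X\widehat{B}$ and $\widehat{B}^H X\widehat{A}$ are honest adjoints during the cancellation, and the base-point evaluation $\widehat{F}_0=0$, $\widehat{R}_0=\widehat{R}$, which is exactly what reconciles $\widehat{H}_{\widehat{F}_Y}$ with $\widehat{H}+\widehat{\mathcal{K}}(Y,0)$; both are immediate from the definitions.
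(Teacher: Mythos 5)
Your proposal is correct, and it is the standard argument for this identity. Note that the paper itself gives no proof of Lemma~\ref{lemma23}: it is quoted from \cite{MR4794239}, so there is nothing internal to compare against; your completing-the-square derivation (first the general-$F$ identity $X-\widehat{\mathcal{R}}(X)=\mathcal{S}_{\widehat{A}-\widehat{B}F}(X)-\widehat{H}_F+\widehat{\mathcal{K}}_F(X)$, then the specialisation $F=\widehat{F}_Y$ and the base-point evaluation $\widehat{H}_{\widehat{F}_Y}=\widehat{H}+\widehat{\mathcal{K}}(Y,0)$) is exactly the non-conjugate analogue of the proof of \eqref{Req-a}--\eqref{Req-b} and is complete. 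You also implicitly, and correctly, resolved two notational wrinkles in the statement: the $\widehat{F}$ in the definition of $\widehat{\mathcal{K}}(Y,X)$ must be read as $\widehat{F}_Y$, and the Stein matrix must be the DARE closed-loop matrix $\widehat{T}^{(D)}_Y=\widehat{A}-\widehat{B}\widehat{F}_Y$ of Table~\ref{Fx2} rather than $\widehat{T}_Y=\overline{T_Y}T_Y$ of Table~\ref{Fx1}; by Proposition~\ref{pro6}(1) the two coincide precisely on $\mathbb{R}_=$, which is why the paper's later uses of the lemma are unaffected. The only hypothesis worth stating explicitly in your write-up is that $H\in\mathrm{dom}(\mathcal{R})$ is what makes the coefficients $\widehat{A},\widehat{B},\widehat{R},\widehat{H}$ of the transformed DARE \eqref{coeff-dare2} well defined, and that $\widehat{R}$ (hence $\widehat{R}_X$) is Hermitian, which your cancellation step uses.
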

\subsection{The FPI~\eqref{fpi} with its convergence property}
Very recently, it was shown in \cite[Lemma 2.3]{MR4487984} that every element of $\mathbb{U}_{\geq}$ is an upper bound of $\mathbb{R}_\leq\cap\mathbb{P}$. A result analogous to the fact that every element of $\mathbb{U}_{\leq}$ is a lower bound of $\mathbb{R}_\geq\cap\mathbb{E}$. Moreover, the existence of the maximal element to $\mathbb{R}_\leq\cap\mathbb{P}$ has been established iteratively, utilizing the framework of the FPI~\eqref{fpi} with an appropriate initial matrix $X_0$. In fact, this maximizer solve CDARE~\eqref{cdare} and thus is exactly equal to $\mathbf{X_M}$. In order to deduce our main theorem presented in the next section, the previous result is summarized as follows, and the proof procedure for the existence of minimizer of $\mathbb{R}_\geq\cap\mathbb{E}$ is very similar
 can be proved in the same way as for the maximizer of $\mathbb{R}_\leq\cap\mathbb{P}$, and thus the proof is omitted.
\begin{Theorem} \label{thm3p1}
%{\color{black} If the assumptions in \eqref{ma1} are fulfilled, then the minimal solution $\mathbf{X_m}$ of the CDARE \eqref{cdare} exists.}
%Furthermore,
%Assume that $\mathbb{T}\neq\emptyset$. Then $\mathbb{S}_\geq\equiv\bigcup\limits_{X\in\mathbb{T}}\mathcal{C}_X^{-1}(H_X)+\mathbb{N}_n
%\neq\emptyset$ and $\mathbb{S}_\leq\equiv\bigcup\limits_{X\in\mathbb{T}}\mathcal{C}_X^{-1}(H_X)+(-\mathbb{N}_n)\neq\emptyset$, where $H_X:=H+K(X,0)$.
The following statements hold:
\begin{enumerate}
%\item
%\begin{align*}
%%&\mathcal{R}(\mathbb{S}_\geq)\subseteq \mathbb{S}_\geq\\
%&(\mathbb{R}_\geq\cap\mathbb{E})\cup\mathcal{R}(\mathbb{S}_\geq)\subseteq\mathbb{S}_\geq\subseteq\mathbf{U(\mathbb{R}_\leq\cap\mathbb{P})}\cap(\mathbb{R}_\geq\cap\mathbb{P})\cap\mathbb{T},\\
%&(\mathbb{R}_\leq\cap\mathbb{P})\cup\mathcal{R}(\mathbb{S}_\leq)\subseteq\mathbb{S}_\leq\subseteq \mathbf{L}(\mathbb{R}_\geq\cap\mathbb{E})\cap(\mathbb{R}_\leq\cap\mathbb{E})\cap\mathbb{T}.
%%&\sup(\mathbb{R}_\leq\cap\mathbb{P})\leq\inf(\mathbb{S}_\geq)\leq\inf(\mathbb{R}_\geq\cap\mathbb{P}),\\
%%&\sup(\mathbb{R}_\leq\cap\mathbb{E})\leq\sup(\mathbb{S}_\leq)\leq\inf(\mathbb{R}_\geq\cap\mathbb{E})
%\end{align*}
  \item \cite[Theorem 3.1]{MR4487984}
When $\mathbb{R}_{\leq}\cap\mathbb{P}\neq\emptyset$, then $\mathbb{R}_{\leq}\cap\mathbb{P}$ admits a maximal element which is exactly equal to $\mathbf{X_M}$. Moreover,
  \begin{enumerate}
  \item[(i)]
The sequence $\{X_k\}_{k=0}^\infty$ generated by the FPI \eqref{fpi} with $X_0\in\mathbb{U}_\geq$ is well-defined. Moreover,
   $X_k\in\mathbb{U}_\geq \subseteq \mathbb{R}_\geq \cap \mathbb{P} \cap \mathbb{T}$ for all $k\geq 0$.
  \item[(ii)] $X_k\geq X_{k+1}\geq X_\mathbb{P}$ for all $k\geq 0$ and $X_\mathbb{P}\in \mathbb{R}_{\leq} \cap \mathbb{P}$.
  \item[(iii)] The sequence $\{X_k\}_{k=0}^\infty$  converges to $\mathbf{X_M}$, which is the maximal element of the set $\mathbb{R}_\leq\cap \mathbb{P}$
and satisfies $\rho (\widehat{T}_{\mathbf{X_M}}) \leq 1$, with the rate of convergence:
  $$  \limsup_{k\rightarrow \infty} \sqrt[k]{\|X_k - \mathbf{X_{M}}\|}\leq \rho (\widehat{T}_{\mathbf{X_M}})^2.$$
Moveover, the convergence is at least linearly, whenever $\mathbf{X_M}\in\mathbb{T}$.
%
%\item[(iii)] For each $k\geq 0$, $X_k\in\mathbb{T}\cap\mathbb{P}$.
\end{enumerate}
  \item
  When $\mathbb{R}_{\geq}\cap\mathbb{E}\neq\emptyset$, then $\mathbb{R}_{\geq}\cap\mathbb{E}$ admits a minimal element which is exactly equal to $\mathbf{Y_m}$. Moreover,
  \begin{enumerate}
  \item[(i)]
The sequence $\{X_k\}_{k=0}^\infty$ generated by the FPI~\eqref{fpi} with $X_0\in\mathbb{U}_\leq$ is well-defined. Moreover,
   $X_k\in\mathbb{U}_\leq \subseteq \mathbb{R}_\leq \cap \mathbb{E} \cap \mathbb{T}$ for all $k\geq 0$.
  \item[(ii)] $X_k\leq X_{k+1}\leq X_\mathbb{E}$ for all $k\geq 0$ and $X_\mathbb{E}\in \mathbb{R}_{\geq} \cap \mathbb{E}$.
  \item[(iii)] The sequence $\{X_k\}_{k=0}^\infty$  converges to $\mathbf{Y_m}$, which is the minimal element of the set $\mathbb{R}_\geq\cap \mathbb{E}$
and satisfies $\rho (\widehat{T}_{\mathbf{Y_m}}) \leq 1$, with the rate of convergence:
    $$\limsup_{k\rightarrow \infty} \sqrt[k]{\|X_k - \mathbf{Y_{m}}\|}\leq \rho (\widehat{T}_{\mathbf{Y_m}})^2. $$
    Moveover, the convergence is at least linearly, whenever $\mathbf{Y_m}\in\mathbb{T}$.
\end{enumerate}
% \item Assume that $\mathbb{T}\neq\emptyset$. %$\mathbb{R}_\leq\cap\mathbb{P}\neq\emptyset$ and $\mathbb{R}_\geq\cap\mathbb{E}\neq\emptyset$.
%Then,
% \begin{align*}
%%&\mathcal{R}(\mathbb{S}_\geq)\subseteq \mathbb{S}_\geq\\
%&(\mathbb{R}_\geq\cap\mathbb{E})\cup\mathcal{R}(\mathbb{U}_\geq)\subseteq\mathbb{U}_\geq\subseteq\mathbf{U(\mathbb{R}_\leq\cap\mathbb{P})}\cap(\mathbb{R}_\geq\cap\mathbb{P})\cap\mathbb{T}
%,\,\mbox{if}\quad\mathbb{R}_\leq\cap\mathbb{P}\neq\emptyset.\\
%&(\mathbb{R}_\leq\cap\mathbb{P})\cup\mathcal{R}(\mathbb{U}_\leq)\subseteq\mathbb{U}_\leq\subseteq \mathbf{L}(\mathbb{R}_\geq\cap\mathbb{E})\cap(\mathbb{R}_\leq\cap\mathbb{E})\cap\mathbb{T}
%,\,\mbox{if}\quad\mathbb{R}_\geq\cap\mathbb{E}\neq\emptyset.
%%&(\mathbb{R}_\geq\cap\mathbb{E})\cup(\mathbb{R}_\leq\cap\mathbb{P})\cup\mathcal{R}(\mathbb{S}_\geq)\cup\mathcal{R}(\mathbb{S}_\leq)\subseteq\mathbb{S}_=\subseteq \mathbf{U(\mathbb{R}_\leq\cap\mathbb{P})})\\
%%&\cap\mathbf{L}(\mathbb{R}_\geq\cap\mathbb{E})\cap(\mathbb{R}_\geq\cap\mathbb{P})\cap(\mathbb{R}_\leq\cap\mathbb{E})\cap\mathbb{T}.
%\end{align*}
%  \item Assume that $\mathbb{O}\neq\emptyset$. Then,
%\begin{align*}
%&\mathbb{R}_\geq\cap\mathbb{E}\subseteq{\mathbb{V}}_\geq\subseteq\mathbf{L(\mathbb{R}_\leq\cap\mathbb{P})},\,\mbox{if}\quad\mathbb{R}_\leq\cap\mathbb{P}\neq\emptyset.\\
%&\mathbb{R}_\leq\cap\mathbb{P}\subseteq{\mathbb{V}}_\leq\subseteq\mathbf{U(\mathbb{R}_\geq\cap\mathbb{E})},\,\mbox{if}\quad\mathbb{R}_\geq\cap\mathbb{E}\neq\emptyset.
%%&\sup(\mathbb{R}_\leq\cap\mathbb{E})\leq\sup(\mathbb{S}_\leq)\leq\inf(\mathbb{R}_\geq\cap\mathbb{E})
%\end{align*}
\end{enumerate}
\end{Theorem}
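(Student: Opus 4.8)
The proof of Part~1 is the cited result \cite[Theorem 3.1]{MR4487984}; my plan for Part~2 is to run that argument through the order-reversing dictionary $\mathbb{P}\leftrightarrow\mathbb{E}$, $\geq\,\leftrightarrow\,\leq$, $\max\leftrightarrow\min$, ``upper bound''$\leftrightarrow$``lower bound'', decreasing$\leftrightarrow$increasing. The one place where a genuine sign change enters is the Kalman term $\mathcal{K}_{F}(X)=(F-F_X)^HR_X(F-F_X)$ in the identity \eqref{Req-a}: on $\mathbb{P}$ one has $R_X>0$ and $\mathcal{K}_F(X)\geq 0$, whereas on $\mathbb{E}$ one has $R_X<0$ and $\mathcal{K}_F(X)\leq 0$, so every inequality of Part~1 flips accordingly.

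First I would establish the dual of \cite[Lemma 2.3]{MR4487984}: every $X\in\mathbb{U}_\leq$ is a lower bound of $\mathbb{R}_\geq\cap\mathbb{E}$. Fix $X\in\mathbb{U}_\leq$ with a witness $W\in\mathbb{T}$, so $\mathcal{C}_{T_W}(X)\leq H_{F_W}$, and take any $Z\in\mathbb{R}_\geq\cap\mathbb{E}$. Applying \eqref{Req-a} at $Z$ with $F=F_W$ (noting $A-BF_W=T_W$) gives $Z-\mathcal{R}(Z)=\mathcal{C}_{T_W}(Z)-H_{F_W}+\mathcal{K}_{F_W}(Z)$; since $Z\in\mathbb{R}_\geq$ and $Z\in\mathbb{E}$ force $Z-\mathcal{R}(Z)\geq 0$ and $\mathcal{K}_{F_W}(Z)\leq 0$, we obtain $\mathcal{C}_{T_W}(Z)\geq H_{F_W}\geq\mathcal{C}_{T_W}(X)$, hence $\mathcal{C}_{T_W}(Z-X)\geq 0$. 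Because $W\in\mathbb{T}$ means $\rho(\widehat{T}_W)=\rho(\overline{T_W}\,T_W)<1$, the conjugate Stein operator $\mathcal{C}_{T_W}$ is invertible with a positivity-preserving inverse, namely $\mathcal{C}_{T_W}^{-1}(Q)=\sum_{k\geq 0}\widehat{T}_W^{\,Hk}\bigl(Q+T_W^H\overline{Q}\,T_W\bigr)\widehat{T}_W^{\,k}$, which is convergent and nonnegative whenever $Q\geq 0$; therefore $Z-X\geq 0$, i.e. $X\leq Z$.

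Next I would transfer the inclusion $\mathbb{U}_\leq\subseteq\mathbb{R}_\leq\cap\mathbb{E}\cap\mathbb{T}$ and run the FPI. For $X\in\mathbb{U}_\leq$ with witness $W$, membership in $\mathbb{E}$ is bootstrapped from the previous step: using $\mathbb{R}_\geq\cap\mathbb{E}\neq\emptyset$ choose $Z$ there; then $X\leq Z$ yields $R_X=R+B^H\overline{X}B\leq R+B^H\overline{Z}B=R_Z<0$, so $R_X<0$ and $X\in\mathbb{E}\subseteq\mathrm{dom}(\mathcal{R})$. Membership in $\mathbb{R}_\leq$ follows by reading \eqref{Req-a} at $X$ with $F=F_W$, where $\mathcal{C}_{T_W}(X)-H_{F_W}\leq 0$ and $\mathcal{K}_{F_W}(X)\leq 0$ (now that $R_X<0$) give $X-\mathcal{R}(X)\leq 0$; membership in $\mathbb{T}$ I would obtain from Proposition~\ref{pro4}(2a), the $\mathbb{E}$-analog of the criterion used on $\mathbb{P}$ in Part~1, after casting the relevant inequality $\mathcal{C}_{T_X}(Y)\leq K(W,X)$ into the required form with $Y\leq 0$ and $W\in\mathbb{T}$. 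Conversely, \eqref{Req-a} at $X$ with the natural feedback $F=F_X$ (so $\mathcal{K}_{F_X}(X)=0$) gives $X-\mathcal{R}(X)=\mathcal{C}_{T_X}(X)-H_{F_X}$, whence any $X\in\mathbb{R}_\leq\cap\mathbb{T}$ is its own witness and lies in $\mathbb{U}_\leq$; this is what keeps the iterates in $\mathbb{U}_\leq$. Monotonicity is then automatic, since $\mathcal{R}$ is order preserving on the relevant domain: from $\mathcal{R}(X)=A^H(\overline{X}^{-1}+G)^{-1}A+H$ its Fr\'echet derivative $\Delta\mapsto A^HM\,\overline{\Delta}\,M^HA$, with $M=(\overline{X}^{-1}+G)^{-1}\overline{X}^{-1}$, is a congruence and hence nonnegative on $\Delta\geq 0$. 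Thus $X_0\in\mathbb{R}_\leq$ gives $X_k\leq X_{k+1}$ for all $k$, while order preservation propagates $X_k\leq X_\mathbb{E}\Rightarrow X_{k+1}=\mathcal{R}(X_k)\leq\mathcal{R}(X_\mathbb{E})\leq X_\mathbb{E}$ for any $X_\mathbb{E}\in\mathbb{R}_\geq\cap\mathbb{E}$, so $R_{X_k}\leq R_{X_\mathbb{E}}<0$ keeps the whole orbit in $\mathbb{E}$ and the FPI well-defined.

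Finally, a bounded monotone increasing sequence in the Loewner order converges; continuity of $\mathcal{R}$ on $\mathrm{dom}(\mathcal{R})$ makes the limit $\mathbf{Y_m}$ a fixed point, and $R_{\mathbf{Y_m}}\leq R_{X_\mathbb{E}}<0$ puts it in $\mathbb{R}_=\cap\mathbb{E}$. Passing $X_k\leq Z$ to the limit over all $Z\in\mathbb{R}_\geq\cap\mathbb{E}$ shows $\mathbf{Y_m}$ is a lower bound of that set lying inside it, hence $\mathbf{Y_m}=\min(\mathbb{R}_\geq\cap\mathbb{E})$; the bound $\rho(\widehat{T}_{\mathbf{Y_m}})\leq 1$ follows by passing $\rho(\widehat{T}_{X_k})<1$ to the limit, and the rate $\limsup_k\sqrt[k]{\|X_k-\mathbf{Y_m}\|}\leq\rho(\widehat{T}_{\mathbf{Y_m}})^2$ from linearizing the error recursion, whose leading operator carries the square $\rho(\overline{T_{\mathbf{Y_m}}}\,T_{\mathbf{Y_m}})^2$ characteristic of the two-step conjugate structure. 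I expect the main obstacle to be keeping the entire orbit inside the negative-definite regime $\mathbb{E}$: this is exactly what guarantees well-definedness and fixes the sign of the Kalman term, and it is where the standing hypothesis $\mathbb{R}_\geq\cap\mathbb{E}\neq\emptyset$ does its work, together with the boundary subtlety that $\rho(\widehat{T}_{\mathbf{Y_m}})$ may equal $1$ in the limit even though each $\rho(\widehat{T}_{X_k})<1$.
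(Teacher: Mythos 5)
Your proposal is correct and takes essentially the same route as the paper: Part~1 is the cited result of \cite{MR4487984}, and Part~2 is precisely the order-reversed dual argument the paper has in mind when it omits the proof as ``very similar,'' with your lower-bound/invariance/monotone-convergence mechanics matching the paper's own alternative verification via Proposition~\ref{P1} and Lemma~\ref{11}. The sign flip you isolate in the Kalman term $\mathcal{K}_F(X)$ on $\mathbb{E}$ is indeed the only place the duality acts nontrivially, and your two soft spots (casting the inequality for Proposition~\ref{pro4}(2a), and the Fr\'echet derivative, which is better written as $\Delta\mapsto T_X^H\overline{\Delta}\,T_X$ so that invertibility of $X$ is not needed) are routine to fill in and do not affect correctness.
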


Next, a preliminary is introduced before giving the main result in this subsection.
%Let $\mathbb{D}_=:= \{X\in \mathrm{dom}(\widetilde{\mathcal{R}})\, |\, X=\widetilde{\mathcal{R}}(X) \}$ with $\mathrm{dom}({\widetilde{R}}) := \{X\in \mathbb{H}_n\, |\, \widetilde{R}_X:=\widetilde{R}+\widetilde{B}^H X\widetilde{B}\mbox{ is nonsingular}\}$, $\widetilde{F}_X:=\widetilde{R}_X^{-1}\widetilde{B}^HX\widetilde{A}$ and $\widetilde{\mathbb{P}}:=\{X\in\mathrm{dom}(\widetilde{\mathcal{R}})|\widetilde{R}_X>0\}$
%.
Inspired by the idea of the convergence region under FPI~\eqref{fpi}, we present an alternative approach to Lemma~\ref{thm3p1}. The following result gives a special expansion of Lemma~\ref{thm3p1}, which plays a crucial role in the convergence of FPI~\eqref{fpi}.
\begin{Lemma}\label{11}
Assume that $S\subseteq \mathbb{R}_\geq\cap\mathbf{U}(\mathbb{R}_=\cap K)$ (resp. $S\subseteq \mathbb{R}_\leq\cap\mathbf{L}(\mathbb{R}_=\cap K)$) for two subsets $S$ and $K$ of $\mathbb{H}_n$. Then, the FPI~\eqref{fpi} generates a nondecreasing (resp. nonincreasing) sequence of matrices
$\{X_k\}$ with any $X_0\in S$ , which converges to an element of $\mathbb{R}_=\cap \mathbf{U}(\mathbb{R}_=\cap K)$ (resp. $\mathbb{R}_=\cap \mathbf{L}(\mathbb{R}_=\cap K)$), if either $\mathcal{R}$ is order preserving on $\mathrm{dom}(\mathcal{R})$ or $S$ is $\mathcal{R}$-invariant.
\end{Lemma}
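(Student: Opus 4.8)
The plan is to run the monotone-convergence scheme for the fixed-point iteration \eqref{fpi} in the increasing direction and then to read off the limit as a fixed point trapped inside the prescribed order bound; the companion nonincreasing case is obtained by reversing every L\"owner inequality, so I would not repeat it. Concretely, suppose $X_0$ is a subsolution lying below every fixed point of $K$, i.e. $X_0\le\mathcal{R}(X_0)$ and $X_0\le Z$ for all $Z\in\mathbb{R}_=\cap K$; this is exactly the content of $S\subseteq\mathbb{R}_\leq\cap\mathbf{L}(\mathbb{R}_=\cap K)$. First I would show $\{X_k\}$ is nondecreasing. Under the $\mathcal{R}$-invariance alternative, $\mathcal{R}(S)\subseteq S$ forces $X_k\in S$ for all $k$, so each iterate inherits the subsolution inequality $X_k\le\mathcal{R}(X_k)=X_{k+1}$. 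Under the order-preserving alternative I argue by induction: $X_0\le X_1$ holds by hypothesis, and applying $\mathcal{R}$ to $X_k\le X_{k+1}$ returns $X_{k+1}=\mathcal{R}(X_k)\le\mathcal{R}(X_{k+1})=X_{k+2}$. Either way the sequence is nondecreasing.

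Next I would produce an upper bound. Fix any $Z\in\mathbb{R}_=\cap K$. In the invariant case $X_k\in S\subseteq\mathbf{L}(\mathbb{R}_=\cap K)$ gives $X_k\le Z$ directly; in the order-preserving case $X_k\le Z$ yields $X_{k+1}=\mathcal{R}(X_k)\le\mathcal{R}(Z)=Z$ because $Z$ is a fixed point. Hence the nondecreasing sequence is bounded above by $Z$ in the L\"owner order, and the monotone-convergence principle for Hermitian matrices (each real form $x^H X_k x$ is nondecreasing and bounded above by $x^H Z x$, then polarize) furnishes a limit $X_\ast\in\mathbb{H}_n$ with $X_0\le X_k\le X_\ast\le Z$.

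It then remains to identify $X_\ast$. Since $\mathbf{L}(\mathbb{R}_=\cap K)=\bigcap_{Z\in\mathbb{R}_=\cap K}\{Y\in\mathbb{H}_n\,|\,Y\le Z\}$ is an intersection of closed half-spaces, it is closed, so $X_\ast\in\mathbf{L}(\mathbb{R}_=\cap K)$. To conclude $X_\ast\in\mathbb{R}_=$ I must pass to the limit in $X_{k+1}=\mathcal{R}(X_k)$, which requires $X_\ast\in\mathrm{dom}(\mathcal{R})$, i.e. $\det(I+G\overline{X_\ast})\neq0$; granting this, continuity of $\mathcal{R}$ on $\mathrm{dom}(\mathcal{R})$ gives $X_\ast=\lim_k X_{k+1}=\lim_k\mathcal{R}(X_k)=\mathcal{R}(X_\ast)$, so $X_\ast\in\mathbb{R}_=\cap\mathbf{L}(\mathbb{R}_=\cap K)$, as claimed.

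The hard part will be precisely this domain membership, both for the limit and, in the order-preserving non-invariant case, for the intermediate iterates, since $\mathrm{dom}(\mathcal{R})$ is open but not order-convex and an order interval with endpoints in the domain may touch its boundary. My plan is to exploit the L\"owner monotonicity of $R_X=R+B^H\overline{X}B$: because $X\mapsto B^H\overline{X}B$ is order preserving, an order interval whose endpoints satisfy $R_{(\cdot)}>0$ (resp.\ $<0$) consists entirely of matrices with $R_{(\cdot)}>0$ (resp.\ $<0$), hence lies in $\mathbb{P}\subseteq\mathrm{dom}(\mathcal{R})$ (resp.\ $\mathbb{E}\subseteq\mathrm{dom}(\mathcal{R})$). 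Thus whenever the order bounds attached to $K$ sit in $\mathbb{P}$ or $\mathbb{E}$, which is exactly the regime in which the lemma is applied, the entire orbit $\{X_k\}$ and its limit remain in $\mathrm{dom}(\mathcal{R})$ and the iteration is well defined throughout. For a general $K$ I would instead argue that the convergence of $\mathcal{R}(X_k)=X_{k+1}$ to the finite limit $X_\ast$ precludes the blow-up of $R_{X_k}^{-1}$ that a singular $R_{X_\ast}$ would force, using $R_{X_k}\to R_{X_\ast}$ together with the representation $\mathcal{R}(X_k)=A^H\overline{X_k}(I+G\overline{X_k})^{-1}A+H$.
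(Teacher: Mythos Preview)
Your argument is the same monotone-convergence scheme the paper invokes, but the paper's own proof is a single sentence citing completeness of Hermitian matrices and says nothing further. You are considerably more careful: you separate the two hypotheses (order-preserving vs.\ $\mathcal{R}$-invariant), establish monotonicity and boundedness explicitly, and---most usefully---flag the domain-membership question for the iterates and the limit, which the paper's proof does not address at all. Your order-interval argument through $\mathbb{P}$ or $\mathbb{E}$ is correct and covers every instance where the lemma is actually applied later in the paper; the final general-$K$ fallback (a finite limit precludes singular $R_{X_\ast}$) is not fully rigorous, since a singularity of $I+G\overline{X_\ast}$ could in principle be absorbed by the factor $A$ in $A^H\overline{X}(I+G\overline{X})^{-1}A$, but the paper makes no attempt here either.

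One cosmetic remark: you treat the case $S\subseteq\mathbb{R}_\leq\cap\mathbf{L}(\mathbb{R}_=\cap K)$ and correctly obtain a \emph{nondecreasing} sequence, whereas the lemma as printed pairs this case with ``nonincreasing''. This is evidently a typo in the statement (supersolutions in $\mathbb{R}_\geq$ yield nonincreasing iterates, subsolutions in $\mathbb{R}_\leq$ nondecreasing ones), and your mathematics is oriented correctly.
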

\begin{proof}
Its proof is based on the completeness of the set of complex matrices~\cite[Proposition 8.6,3.]{Bernstein2009}. Namely, every sequence of complex matrices
increases (resp. decreases) and is bounded above (resp. bounded below) by a supremum (resp. infimum), it will converge to the supremum (resp. infimum). The result can be obtained by assumptions immediately.
\end{proof}
With the preceding preliminary, one can obtain the following proposition on the convergence region to $\mathbf{X_M}$ and $\mathbf{Y_m}$ under FPI~\eqref{fpi}, and the lower bound of $\mathbb{R}_\leq\cap\mathbb{P}$ and upper bound of $\mathbb{R}_\geq\cap\mathbb{E}$, respectively.
\begin{Proposition}\label{P1}
\begin{subequations}
\par\noindent
\begin{enumerate}
\item Assume that $\mathbb{T}\neq\emptyset$. %$\mathbb{R}_\leq\cap\mathbb{P}\neq\emptyset$ and $\mathbb{R}_\geq\cap\mathbb{E}\neq\emptyset$.
Then,
 \begin{align}
%&\mathcal{R}(\mathbb{S}_\geq)\subseteq \mathbb{S}_\geq\\
&(\mathbb{R}_\geq\cap\mathbb{E})\cup\mathcal{R}(\mathbb{U}_\geq)\subseteq\mathbb{U}_\geq\subseteq\mathbf{U(\mathbb{R}_\leq\cap\mathbb{P})}\cap(\mathbb{R}_\geq\cap\mathbb{P})\cap\mathbb{T}
,\,\mbox{if}\quad\mathbb{R}_\leq\cap\mathbb{P}\neq\emptyset.\label{cr1}\\
&(\mathbb{R}_\leq\cap\mathbb{P})\cup\mathcal{R}(\mathbb{U}_\leq)\subseteq\mathbb{U}_\leq\subseteq \mathbf{L}(\mathbb{R}_\geq\cap\mathbb{E})\cap(\mathbb{R}_\leq\cap\mathbb{E})\cap\mathbb{T}
,\,\mbox{if}\quad\mathbb{R}_\geq\cap\mathbb{E}\neq\emptyset.\label{cr2}
%&(\mathbb{R}_\geq\cap\mathbb{E})\cup(\mathbb{R}_\leq\cap\mathbb{P})\cup\mathcal{R}(\mathbb{S}_\geq)\cup\mathcal{R}(\mathbb{S}_\leq)\subseteq\mathbb{S}_=\subseteq \mathbf{U(\mathbb{R}_\leq\cap\mathbb{P})})\\
%&\cap\mathbf{L}(\mathbb{R}_\geq\cap\mathbb{E})\cap(\mathbb{R}_\geq\cap\mathbb{P})\cap(\mathbb{R}_\leq\cap\mathbb{E})\cap\mathbb{T}.
\end{align}
  \item Assume that $\mathbb{O}\neq\emptyset$. Then,
\begin{align}
&\mathbb{R}_\geq\cap\mathbb{E}\subseteq{\mathbb{V}}_\geq\subseteq\mathbf{L(\mathbb{R}_\leq\cap\mathbb{P})},\,\mbox{if}\quad\mathbb{R}_\leq\cap\mathbb{P}\neq\emptyset.\label{cr3}\\
&\mathbb{R}_\leq\cap\mathbb{P}\subseteq{\mathbb{V}}_\leq\subseteq\mathbf{U(\mathbb{R}_\geq\cap\mathbb{E})},\,\mbox{if}\quad\mathbb{R}_\geq\cap\mathbb{E}\neq\emptyset.\label{cr4}
%&\sup(\mathbb{R}_\leq\cap\mathbb{E})\leq\sup(\mathbb{S}_\leq)\leq\inf(\mathbb{R}_\geq\cap\mathbb{E})
\end{align}
\end{enumerate}
\end{subequations}
\end{Proposition}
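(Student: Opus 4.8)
The plan is to route every one of the containments through a single ``one-step'' identity, obtained from \eqref{Req-a} by taking the feedback $F=F_W$: since $A_{F_W}=A-BF_W=T_W$, $H_{F_W}=H+F_W^HRF_W$, and $\mathcal{K}_{F_W}(X)=\mathcal{K}(W,X)$, it reads
\[
\mathcal{C}_{T_W}(X)-H_{F_W}=(X-\mathcal{R}(X))-\mathcal{K}(W,X),\qquad W,X\in\mathrm{dom}(\mathcal{R}).
\]
Every membership test defining $\mathbb{U}_\geq,\mathbb{U}_\leq,\mathbb{V}_\geq,\mathbb{V}_\leq$ is exactly a sign condition on the left-hand side for a suitable witness $W$, so each inclusion becomes sign bookkeeping on the two terms $X-\mathcal{R}(X)$ and $\mathcal{K}(W,X)=(F_W-F_X)^HR_X(F_W-F_X)$, whose signs are controlled by membership in $\mathbb{R}_\geq$/$\mathbb{R}_\leq$ and by $R_X>0$/$R_X<0$ (that is, $X\in\mathbb{P}$ or $X\in\mathbb{E}$). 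The right-most inclusions $\mathbb{U}_\geq\subseteq\mathbf{U}(\mathbb{R}_\leq\cap\mathbb{P})\cap(\mathbb{R}_\geq\cap\mathbb{P})\cap\mathbb{T}$ in \eqref{cr1} and its $\mathbb{U}_\leq$ counterpart in \eqref{cr2} are already contained in Theorem~\ref{thm3p1} together with \cite[Lemma 2.3]{MR4487984}, so I would cite those and devote the work to the genuinely new containments.

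For the ``source into region'' inclusions $\mathbb{R}_\geq\cap\mathbb{E}\subseteq\mathbb{U}_\geq$ of \eqref{cr1} and $\mathbb{R}_\geq\cap\mathbb{E}\subseteq\mathbb{V}_\geq$ of \eqref{cr3}, I would take $X\in\mathbb{R}_\geq\cap\mathbb{E}$ and read off from the identity that $\mathcal{C}_{T_W}(X)-H_{F_W}=(X-\mathcal{R}(X))-\mathcal{K}(W,X)\geq 0$ for \emph{every} admissible $W$, because $X-\mathcal{R}(X)\geq 0$ (as $X\in\mathbb{R}_\geq$) while $-\mathcal{K}(W,X)\geq 0$ (as $R_X<0$). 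The decisive observation is that this positivity is uniform in $W$, so one is free to pick the witness in whichever of $\mathbb{T}$, $\mathbb{O}$ is assumed nonempty: a choice $W\in\mathbb{T}$ gives $X\in\mathbb{U}_\geq$ and a choice $W\in\mathbb{O}$ gives $X\in\mathbb{V}_\geq$. The mirror inclusions $\mathbb{R}_\leq\cap\mathbb{P}\subseteq\mathbb{U}_\leq$ in \eqref{cr2} and $\mathbb{R}_\leq\cap\mathbb{P}\subseteq\mathbb{V}_\leq$ in \eqref{cr4} are the sign-reversed statements, where now $X-\mathcal{R}(X)\leq 0$ and $\mathcal{K}(W,X)\geq 0$ force $\mathcal{C}_{T_W}(X)\leq H_{F_W}$ for every $W$.

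For the invariance inclusions $\mathcal{R}(\mathbb{U}_\geq)\subseteq\mathbb{U}_\geq$ and $\mathcal{R}(\mathbb{U}_\leq)\subseteq\mathbb{U}_\leq$, I would fix $X$ in the set, invoke the right-most inclusion to know $X\in\mathbb{R}_\geq\cap\mathbb{P}\cap\mathbb{T}$ (resp.\ $\mathbb{R}_\leq\cap\mathbb{E}\cap\mathbb{T}$), and apply the identity at the witness $W=X$. Since $\mathcal{K}(X,X)=0$, the identity collapses to $\mathcal{C}_{T_X}(X)=(X-\mathcal{R}(X))+H_{F_X}$; subtracting this from $\mathcal{C}_{T_X}$ evaluated at $\mathcal{R}(X)$ and using linearity of $\mathcal{C}_{T_X}$ gives
\[
\mathcal{C}_{T_X}(\mathcal{R}(X))-H_{F_X}=\Delta-\mathcal{C}_{T_X}(\Delta)=T_X^H\,\overline{\Delta}\,T_X,\qquad \Delta:=X-\mathcal{R}(X).
\]
When $X\in\mathbb{R}_\geq$ we have $\Delta\geq 0$, so the right side is positive semidefinite, and the witness $W=X\in\mathbb{T}$ certifies $\mathcal{R}(X)\in\mathbb{U}_\geq$; the $\mathbb{U}_\leq$ case merely reverses the sign of $\Delta$.

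Finally, for the bound inclusions I would compare two Stein values along a common witness. Given $X\in\mathbb{V}_\geq$ with witness $W\in\mathbb{O}$ and any $Z\in\mathbb{R}_\leq\cap\mathbb{P}$, the source computation applied to $Z$ yields $\mathcal{C}_{T_W}(Z)\leq H_{F_W}\leq\mathcal{C}_{T_W}(X)$, hence $\mathcal{C}_{T_W}(D)\geq 0$ with $D:=X-Z$. The crux, and the step I expect to be the main obstacle, is converting this conjugate (antilinear) Stein inequality into the sign $D\leq 0$. I would first reduce it to an ordinary Stein inequality: conjugating $D\geq T_W^H\overline{D}T_W$ and substituting the result back into itself, using $\overline{T_W^{\,H}}=(\overline{T_W})^H$ and $\overline{T_W}\,T_W=\widehat{T}_W$, gives $D\geq\widehat{T}_W^H D\,\widehat{T}_W$. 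Because $W\in\mathbb{O}$ means $\mu(\widehat{T}_W)>1$, the matrix $\widehat{T}_W$ is nonsingular with $\rho(\widehat{T}_W^{-1})<1$; conjugating by $\widehat{T}_W^{-1}$ turns the inequality into $(-D)\geq\widehat{T}_W^{-H}(-D)\widehat{T}_W^{-1}$, and the convergent Stein series $\sum_{k\geq 0}(\widehat{T}_W^{-H})^k(\cdot)\,\widehat{T}_W^{-k}$ then forces $-D\geq 0$, i.e.\ $X\leq Z$. This is exactly the anti-stable counterpart of \cite[Lemma 2.3]{MR4487984}, which delivers the stable-side bound $\mathbb{U}_\geq\subseteq\mathbf{U}(\mathbb{R}_\leq\cap\mathbb{P})$ via $\rho(\widehat{T}_W)<1$; the remaining bounds $\mathbb{V}_\leq\subseteq\mathbf{U}(\mathbb{R}_\geq\cap\mathbb{E})$ in \eqref{cr4} and $\mathbb{U}_\leq\subseteq\mathbf{L}(\mathbb{R}_\geq\cap\mathbb{E})$ in \eqref{cr2} follow by the identical argument with all inequalities reversed.
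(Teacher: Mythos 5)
Your proof is correct and follows the same skeleton as the paper's: both route everything through the identity \eqref{Req-a} with $F=F_W$, both cite Theorem~\ref{thm3p1} together with \cite[Lemma 2.3]{MR4487984} for the right-most inclusions of \eqref{cr1}--\eqref{cr2}, and your invariance computation $\mathcal{C}_{T_X}(\mathcal{R}(X))-H_{F_X}=T_X^H\overline{\Delta}\,T_X\geq 0$ (with $\Delta=X-\mathcal{R}(X)$ and witness $W=X\in\mathbb{T}$) is literally the computation in the paper; the only cosmetic difference is that the paper re-derives $X\in\mathbb{T}$ through a separate estimate and Proposition~\ref{pro4}, whereas you read it off from the cited right-most inclusion. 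Where you go beyond the written proof: the paper only works out \eqref{cr1} and dismisses the source inclusions and all of \eqref{cr2}--\eqref{cr4} as ``similar'', so your uniform-in-$W$ sign bookkeeping for $\mathbb{R}_\geq\cap\mathbb{E}\subseteq\mathbb{U}_\geq$, $\mathbb{V}_\geq$ (and its mirror) and, above all, your explicit treatment of the $\mathbb{O}$-witness bounds --- squaring the conjugate Stein inequality $D\geq T_W^H\overline{D}T_W$ into the ordinary one $D\geq\widehat{T}_W^H D\,\widehat{T}_W$, then inverting $\widehat{T}_W$ (legitimate since $\mu(\widehat{T}_W)>1$ makes $\rho(\widehat{T}_W^{-1})<1$) and summing the convergent Stein series --- supply exactly the anti-stable counterpart of the cited stable-side lemma that the paper never writes down. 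That step is the only genuinely nontrivial content among the deferred parts, and your version of it is sound, so your proposal is in effect a more self-contained rendering of the paper's argument rather than a different one.
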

\begin{proof}
We will provide the proof of \eqref{cr1} and the other results there can be proved in a similar way. We only claim $\mathcal{R}(\mathbb{U}_\geq)\subseteq\mathbb{U}_\geq$ and refer the reader to \cite[Lemma 2.3]{MR4487984} for the remaining part. Indeed, for $X_0 \in \mathbb{U}_\geq$, it follows that $X_0 \geq X_{\mathbb{P}}$ and $X_0 \in \mathbb{R}_\geq \cap \mathbb{P}$. Thus, $X_1 = \mathcal{R} (X_0)$ is well-defined with $X_0 \geq X_1$. Furthermore, we also have
   \begin{align*}
 \mathcal{C}_{T_{X_0}}(X_0-X_\mathbb{P}) &= \mathcal{C}_{T_{X_0}}(X_0)
 -\mathcal{C}_{T_{X_0}}(X_\mathbb{P})= H_{X_0}+X_0-\mathcal{R}(X_0)-\mathcal{C}_{T_{X_0}}(X_\mathbb{P})\\
 &\geq H_{X_0}+K(X_\mathbb{T},X_0)-(H_{X_0}+X_\mathbb{P}-\mathcal{R}(X_\mathbb{P})-K(X_0, X_\mathbb{T}))\\
 &= K(X_\mathbb{T},X_0)-(X_\mathbb{P}-\mathcal{R}(X_\mathbb{P}))+K(X_0,X_\mathbb{P})
 \geq K(X_\mathbb{T},X_0),
\end{align*}
for some $X_{\mathbb{T}} \in \mathbb{T}$. Then $X_0\in \mathbb{T}$ follows immediately from Proposition~\ref{pro4} since
  $\rho(\widehat{T}_{X_{\mathbb{T}}})<1$. From~\eqref{Req-b} we have
\begin{align*}%\label{eq}
 X_0-\mathcal{R}(X_0)&=\mathcal{C}_{T_{X_0}}(X_0)-H_{X_0}.
\end{align*}
 We see that
\begin{align*}
\mathcal{C}_{T_{X_0}}(X_{1})-H_{X_0}
=\mathcal{C}_{T_{X_0}}(X_{1})+X_0-X_{1}-\mathcal{C}_{T_{X_0}}(X_{0})
=-\mathcal{C}_{T_{X_0}}(X_0-X_{1})+X_0-X_{1}=T_0^H(\overline{X}_0-\overline{X}_{1})T_0\geq 0.
\end{align*}
  Thus, it implies that $X_{1}=\mathcal{R}(X_0)\in \mathbb{S}_\geq$ because $X_0\in \mathbb{T}$.
\end{proof}
Proposition~\ref{P1} reveals very helpful light on two extremal solution $\mathbf{X_M}$ and $\mathbf{Y_m}$. Namely, it has been shown that
$\mathbb{U}_\geq\subseteq\mathbb{P}$ and $\mathbb{U}_\leq\subseteq\mathbb{E}$ are two closed and $\mathcal{R}$-invariant sets. Hence, it follows
from Lemma~\ref{11} that every sequence $\{X_k\}$ with $X_0\in \mathbb{U}_\geq$ (resp. $X_0\in \mathbb{U}_\leq$) generated by the FPI~\eqref{fpi} converges to $\mathbf{X_M}$ (resp. $\mathbf{Y_m}$). That is, this result provides an alternative verification to Lemma~\ref{thm3p1}.
\begin{Remark}
Let $X\in\mathbb{U}_\geq\cup\mathbb{U}_\leq$.
In general, $\mathcal{R}$ is order preserving on $\bigcup\limits_{n\geq 0}\mathcal{R}^{(n)}(X)$ from Proposition~\ref{P1}. Especially,
$\mathcal{R}$ is order preserving on $\mathbb{N}_n$ if $R>0$ and $H\geq0$, $\mathcal{R}$ is order reversing if $R<0$ and $H\leq0$ on $-\mathbb{N}_n$.
 %where two sets $\mathbb{N}_n$ and $-\mathbb{N}_n$ are denoted by the sets of all positive semidefinite matrices with size $n$ and all negative semidefinite matrices with size $n$, respectively.
\end{Remark}
%\begin{Proposition}
%Assume that $\mathbb{T}\neq\emptyset$ (resp. to $\mathbb{O}\neq\emptyset$) and $\mathbb{R}_\leq\cap\mathbb{P}\neq\emptyset$ (resp. to $\mathbb{R}_\geq\cap\mathbb{E}\neq\emptyset$. Then $\mathcal{R}$ is order preserving (resp. to reversing) on $\mathbb{U}_\geq$.
%\end{Proposition}
%\begin{proof}
%Rewrite $\mathbb{U}_\geq$ as
%\[
%\mathbb{U}_\geq\equiv\bigcup\limits_{X\in\mathbb{U}_\geq}\bigcup\limits_{n\geq 0}\mathcal{R}^{(n)}(X)
%\]
%\end{proof}
It is interesting to ask whether the results~\eqref{cr3} and \eqref{cr4} lead the same conclusion as the above? The answer for this problem is unsure since $\mathbb{V}_\geq$ or $\mathbb{V}_\leq$ may not be a $\mathcal{R}$-invariant set. Based on the above reasons, we shall investigate other kind of fixed-point iteration to guarantee the existence of the convergence region to two extremal solutions $\mathbf{X_m}$ and $\mathbf{Y_M}$ in the next section, which generalizes the aforementioned works.

\section{Extremal solutions: $\mathbf{X_m}$, $\mathbf{Y_M}$, $\mathbf{Z_M}$, $\mathbf{Z_m}$, $\mathbf{W_M}$ and $\mathbf{W_m}$}\label{sec3}
Before proceeding, to advance our proposal for the existence of six extremal solutions, some relevant properties about two auxiliary equations are first given as follows, which the following relationship among some subsets on $\mathrm{dom}(\mathcal{R})$ is
needed.
%Note that $A$ is nonsingular if $\mathbb{O}\neq\emptyset$.
 \begin{Proposition}\label{pro6}
Assume that $A$ is nonsingular and $H,\overline{H}_A\in \mathrm{dom}(\mathcal{R})$.
% For any $X\in\mathrm{dom}(\mathcal{R})$, $Y\in\mathrm{Dom}({\widetilde{\mathcal{R}}})$ and $Z=-\mathcal{R}(X)$,
 The following statements hold:
 \par\noindent

 \begin{enumerate}
 \item $\widehat{F}_X\equiv\bb \overline{F}_X T_{\mathcal{R}(X)}\\ F_{\mathcal{R}(X)}-F_H\eb$ and
      $\widehat{T}_X^{(D)}\equiv\widehat{A}-\widehat{B}\widehat{F}_X=\overline{T}_X T_{\mathcal{R}(X)}$ if $X$,$\mathcal{R}(X)\in\mathrm{dom}(\mathcal{R})$.
    Moreover, $\widehat{T}_X^{(D)}=\widehat{T}_X$  for any $X\in\mathbb{R}_=$.
   \item When $X\in\mathrm{dom}(\mathcal{R})$, then
   $Z=-\mathcal{R}(X)\in\mathrm{dom}({\widetilde{\mathcal{R}}})$. Conversely, when $Z\in\mathrm{dom}({\widetilde{\mathcal{R}}})$, then $X=-\overline{\widetilde{\mathcal{R}}}(Z)\in\mathrm{dom}({\mathcal{R}})$. Furthermore,
   \begin{align}\label{f1}
   \widehat{T}_X\widehat{U}_{Z}=I
   \end{align}
   for any $X\in\mathrm{dom}(\mathcal{R})$. Moreover,
   $X\in\mathbb{O}\cap\mathbb{E}$ if and only if $Z=-\mathcal{R}(X)\in\widetilde{\mathbb{T}}\cap\widetilde{\mathbb{E}}$, and $X\in\mathbb{T}\cap\mathbb{P}$ if and only if $Z=-\mathcal{R}(X)\in\widetilde{\mathbb{O}}\cap\widetilde{\mathbb{P}}$.

  % \item $\rho(T_X\widehat{T}_Y)\leq 1$.
   \item
 %  \begin{subequations}
 When $\mathbb{R}_\leq\cap\mathbb{P}\neq\emptyset$, we have
\begin{align}\label{f2}
\mathbb{R}_\leq\cap\mathbb{P}=-\widetilde{\mathbb{R}}_\leq\cap\widetilde{\mathbb{P}}\subseteq\widehat{\mathbb{P}}\subseteq{\mathbb{P}}.
\end{align}
(resp. $\mathbb{R}_\geq\cap\mathbb{E}=-\widetilde{\mathbb{R}}_\geq\cap\widetilde{\mathbb{E}}\subseteq\widehat{\mathbb{E}}\subseteq{\mathbb{E}}$ if $\widetilde{\mathbb{R}}_\leq\cap\widetilde{\mathbb{P}}\neq\emptyset$)
%   \end{subequations}

   %$X\in\mathbb{R}_\leq\cap\mathbb{P}$ if and only if $Y=-X\in{\mathcal{D}}_\leq\cap\widehat{\mathbb{P}}$.
%       Similarly,
%       $X\in\mathbb{R}_\geq\cap\mathbb{E}$ if and only if $Y=-X\in{\mathcal{D}}_\geq\cap\widehat{\mathbb{E}}$.
    \item When $G,H\geq 0$, we have
    \begin{align}\label{f3}
 \mathcal{R}(\mathbb{R}_\geq\cap\mathbb{N}_n)=-(\widetilde{\mathbb{R}}_\geq\cap -\mathbb{N}_n)
    \end{align}
 or equivalently $\overline{\widetilde{\mathcal{R}}}(\widetilde{\mathbb{R}}_\geq\cap\mathbb{N}_n)=-(\mathbb{R}_\geq\cap -\mathbb{N}_n)$.

 \item
When $\mathbb{T}\neq\emptyset$, we have
\begin{align}\label{f4}
  \mathbb{T}=-\overline{\widetilde{\mathcal{R}}}(\widetilde{\mathbb{O}})\subseteq\widehat{\mathbb{T}}.
 \end{align}
(resp. $\mathbb{O}=-\overline{\widetilde{\mathcal{R}}}(\widetilde{\mathbb{T}})\subseteq\widehat{\mathbb{O}}$ if $\widetilde{\mathbb{O}}\neq\emptyset$)
%,
%$\widetilde{\mathbb{T}}=-{\mathcal{R}}({\mathbb{O}})\subseteq{\mathbb{O}}$ if $\widetilde{\mathbb{T}}\neq\emptyset$ and $\widetilde{\mathbb{O}}=-{\mathcal{R}}({\mathbb{T}})\subseteq\widehat{\mathbb{O}}$ if $\widetilde{\mathbb{O}}\neq\emptyset$)
 %\item
%  $\mathbb{P}=-\overline{\mathcal{D}}(\widetilde{\mathbb{P}})$
%  and $\mathbb{E}=-\overline{\mathcal{D}}(\widetilde{\mathbb{E}})$.

  \item
%  \begin{subequations}
When $\mathbb{T}\neq\emptyset$, we have
 \begin{align}\label{f6}
\mathbb{U}_\geq=-{\widetilde{\mathbb{U}}}_\geq\subseteq\widehat{\mathbb{U}}_\geq\quad\mbox{and}\quad \mathbb{U}_\leq=-{\widetilde{\mathbb{U}}}_\leq\subseteq\widehat{\mathbb{U}}_\leq.
 \end{align}
 (resp. $\mathbb{V}_\geq=-{\widetilde{\mathbb{V}}}_\geq\subseteq\widehat{\mathbb{V}}_\geq$ and
 $\mathbb{V}_\leq=-{\widetilde{\mathbb{V}}}_\leq\subseteq\widehat{\mathbb{V}}_\leq$  if $\mathbb{O}\neq\emptyset$  )
 \end{enumerate}
 \end{Proposition}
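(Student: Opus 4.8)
The backbone of the whole argument is the involutive duality already recorded in Proposition~\ref{pro3}: the maps $g:=-\mathcal{R}$ and $h:=-\overline{\widetilde{\mathcal{R}}}$ are mutually inverse bijections between $\mathrm{dom}(\mathcal{R})$ and $\mathrm{dom}(\widetilde{\mathcal{R}})$, together with the fact that the transformed operator is the two-step map $\widehat{\mathcal{R}}=\mathcal{R}\circ\mathcal{R}$. The plan is to prove the six items in the stated order, since each later item feeds on the earlier ones: items~1 and~2 are the computational heart (explicit closed-loop matrices and the inertia/spectral dictionary), while items~3--6 are translations of ordered sets, spectral sets and convergence regions through the dictionary of item~2.

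For item~1 I would simply expand $\widehat{F}_X=\widehat{R}_X^{-1}\widehat{B}^H X\widehat{A}$ using the block coefficients in \eqref{coeff-dare2}. Since $\widehat{B}=\bb\overline{B}&\overline{A}B\eb$ is a block row, $\widehat{R}=\overline{R}\oplus R_H$ is block diagonal and $\widehat{A}=\overline{A}(A-BF_H)=\overline{A}T_H$, the product $\widehat{A}-\widehat{B}\widehat{F}_X$ collapses, after recognizing $F_{\mathcal{R}(X)}$ and $T_{\mathcal{R}(X)}$ inside the second block, to $\overline{T_X}\,T_{\mathcal{R}(X)}$; reading off the two block rows gives the claimed form of $\widehat{F}_X$. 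Specializing $\mathcal{R}(X)=X$ for $X\in\mathbb{R}_=$ turns $T_{\mathcal{R}(X)}$ into $T_X$, whence $\widehat{T}_X^{(D)}=\overline{T_X}\,T_X=\widehat{T}_X$. For item~2 the domain statements are read off from the invertibility of $I-\widetilde{G}\mathcal{R}(X)$ established inside the proof of Proposition~\ref{pro3}, whose inverse is $A^{-1}(I+G\overline{X})\widetilde{A}^{-1}$ (cf.\ \eqref{231}). This immediately yields $U_Z=(I+\widetilde{G}Z)^{-1}\widetilde{A}=A^{-1}(I+G\overline{X})=T_X^{-1}$ for $Z=-\mathcal{R}(X)$, hence $\widehat{U}_Z=U_Z\overline{U}_Z=(\overline{T_X}\,T_X)^{-1}=\widehat{T}_X^{-1}$ and $\widehat{T}_X\widehat{U}_Z=I$. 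The spectral equivalences then follow from the spectral mapping $\sigma(\widehat{U}_Z)=\{\lambda^{-1}:\lambda\in\sigma(\widehat{T}_X)\}$, which exchanges $\rho(\cdot)<1$ with $\mu(\cdot)>1$; for the control weights I would compute $\widetilde{R}_Z=\widetilde{R}+\widetilde{B}^H Z\widetilde{B}$ and use the push-through identity $(I+G\overline{X})^{-1}B=BR_X^{-1}R$ to reduce it to the congruence $\widetilde{R}_Z=RR_X^{-1}R$, so that Sylvester's law of inertia gives $\widetilde{R}_Z\gtrless 0\Leftrightarrow R_X\gtrless 0$. Combining the spectral and inertia dictionaries delivers $X\in\mathbb{O}\cap\mathbb{E}\Leftrightarrow Z\in\widetilde{\mathbb{T}}\cap\widetilde{\mathbb{E}}$ and $X\in\mathbb{T}\cap\mathbb{P}\Leftrightarrow Z\in\widetilde{\mathbb{O}}\cap\widetilde{\mathbb{P}}$.

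Items~3--6 are then obtained by pushing the relevant sets through the bijection $g=-\mathcal{R}$ (with inverse $h=-\overline{\widetilde{\mathcal{R}}}$) and invoking item~2. For item~3 one checks that $g$ carries $\mathbb{R}_\leq\cap\mathbb{P}$ onto the corresponding sub-solution/positivity set of the dual equation, the inequality being preserved because $\overline{\widetilde{\mathcal{R}}}(g(X))=-X$ forces $g(X)\le\overline{\widetilde{\mathcal{R}}}(g(X))\Leftrightarrow X\le\mathcal{R}(X)$, and the positivity being transported by the inertia relation $\widetilde{R}_Z=RR_X^{-1}R$. Item~4 is the cleanest: for $G,H\ge 0$ one has $\mathcal{R}(X)\ge H\ge 0$ on $\mathbb{N}_n$, so $g(X)=-\mathcal{R}(X)\le 0$ lands in $-\mathbb{N}_n$, and the same inequality correspondence places it in $\widetilde{\mathbb{R}}_\ge$; bijectivity upgrades the inclusion to the stated equality. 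Items~5 and~6 use $\mathbb{T}=h(\widetilde{\mathbb{O}})$ (the spectral half of item~2) and then translate the defining Stein/Lyapunov inequalities of the convergence regions $\mathbb{U},\widetilde{\mathbb{U}},\mathbb{V},\widetilde{\mathbb{V}}$ into one another via the closed-loop identity $U_Z=T_X^{-1}$ and the weight formulas of item~1.

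The delicate points are twofold. First, the ordered-set equalities in items~3--4 require scrupulous bookkeeping of the conjugations and signs produced by $\overline{\widetilde{\mathcal{R}}}$ versus $\widetilde{\mathcal{R}}$; one must verify that the ``tilde'' sub/super-solution sets are read with respect to the genuine dual-CDARE map $\overline{\widetilde{\mathcal{R}}}$ so that the inequality $X\le\mathcal{R}(X)$ transfers cleanly, and that the bijection used is indeed $g=-\mathcal{R}$ (not $X\mapsto-X$, which only matches on $\mathbb{R}_=$). Second, and this is the genuine obstacle, the inclusions into the transformed objects --- $\widehat{\mathbb{P}}$, $\widehat{\mathbb{T}}$, $\widehat{\mathbb{U}}_\ge$ --- compare one-step CDARE data with the two-step DARE $\widehat{\mathcal{R}}=\mathcal{R}\circ\mathcal{R}$ at a point that need not be a fixed point, so $\widehat{T}_X^{(D)}=\overline{T_X}\,T_{\mathcal{R}(X)}$ differs from $\widehat{T}_X=\overline{T_X}\,T_X$. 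Establishing $\mathbb{R}_\leq\cap\mathbb{P}\subseteq\widehat{\mathbb{P}}$ will require a Schur-complement argument (Lemma~\ref{Schur}) deducing $\widehat{R}_X>0$ from $R_X>0$ together with the sub-solution property, and establishing $\mathbb{T}\subseteq\widehat{\mathbb{T}}$ will require bounding $\rho(\overline{T_X}\,T_{\mathcal{R}(X)})$ by the one-step spectral data; I expect this step, rather than the algebraic identities, to be where the real work lies.
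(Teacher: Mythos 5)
Most of your outline does track the paper's own route: item~2 via the SMWF computation giving $U_Z=T_X^{-1}$, hence $\widehat{T}_X\widehat{U}_Z=I$ and the inertia relation $\widetilde{R}_Z^{-1}=R^{-1}R_XR^{-1}$; item~4 via the identity $Z-\overline{\widetilde{\mathcal{R}}}(Z)=X-\mathcal{R}(X)$ for $Z=-\mathcal{R}(X)$ together with $\mathcal{R}(X)\geq H\geq 0$ on $\mathbb{N}_n$; item~5 directly from \eqref{f1}; and the equality half of item~6 by transporting the Stein-type inequalities through a congruence by the closed-loop matrix, which is the paper's identity \eqref{f8}.

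However, your item~3 contains a genuine gap, and it sits exactly where you thought you were being careful. The statement \eqref{f2} asserts $\mathbb{R}_\leq\cap\mathbb{P}=-(\widetilde{\mathbb{R}}_\leq\cap\widetilde{\mathbb{P}})$ with $-S=\{-X;\,X\in S\}$ per Definition~2.1, i.e.\ it is a claim about \emph{plain negation} $X\mapsto -X$. Your bijection $g=-\mathcal{R}$ proves a different claim: the inequality transfer $g(X)\leq\overline{\widetilde{\mathcal{R}}}(g(X))\Leftrightarrow X\leq\mathcal{R}(X)$ and the inertia relation $\widetilde{R}_{g(X)}=RR_X^{-1}R$ are statements about the point $-\mathcal{R}(X)$, not about $-X$; off the fixed-point set these differ, so what you characterize is the image $-\mathcal{R}(\mathbb{R}_\leq\cap\mathbb{P})$ --- which is precisely the shape of statement \eqref{f3} in item~4, where the paper deliberately writes $\mathcal{R}(\cdot)$ on the left --- and it does not yield \eqref{f2}. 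Your parenthetical assertion that $X\mapsto-X$ ``only matches on $\mathbb{R}_=$'' is exactly backwards: the entire content of item~3 is that negation works on all of $\mathbb{R}_\leq\cap\mathbb{P}$, and this is where the paper invests its real effort. It encodes $X\in\mathbb{R}_\leq\cap\mathbb{P}$ as positive semidefiniteness of a block matrix $\mathcal{M}_X$ whose $(2,2)$ block is $R_X$ (Lemma~\ref{Schur}), performs a congruence by a unit block-triangular matrix $L_1$, reads off $\widetilde{R}_{-X}=R_{H_A-X_A}>0$ from the transformed $(2,2)$ block together with a rank argument, and obtains $\widetilde{\mathcal{R}}(-X)\geq-\overline{X}$ from the Schur complement, i.e.\ $-X\in\widetilde{\mathbb{R}}_\leq\cap\widetilde{\mathbb{P}}$. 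None of this is recoverable from a dictionary based at $-\mathcal{R}(X)$. A secondary, smaller deficiency: the hatted inclusions $\subseteq\widehat{\mathbb{P}}$, $\subseteq\widehat{\mathbb{T}}$, $\subseteq\widehat{\mathbb{U}}_\geq$, which you correctly flag as the places where $\widehat{T}^{(D)}_X=\overline{T_X}\,T_{\mathcal{R}(X)}\neq\widehat{T}_X$ matters, are only announced as ``where the real work lies'' but never carried out; the paper settles $\mathbb{U}_\geq\subseteq\widehat{\mathbb{U}}_\geq$ by the explicit identity \eqref{f9}, and a complete write-up would need that computation (or an equivalent) rather than a deferral.
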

\begin{proof}\par\noindent
\begin{enumerate}
\item
Concerning the result, which is more
technical, we refer the interested reader to the reference~\cite{CHIANG202471}.
\item
With some manipulations one obtains that %$I-\widehat{G} H$ is invertible since $H_A\in\mathrm{dom}(\mathcal{R})$ and two identities
\begin{align*}
  &I+G\overline{X}-A(I-\widetilde{G} H)^{-1}\widetilde{G}A^H\overline{X}=I+(G-\widetilde{A}^{-1}\widetilde{G}A^H)\overline{X}=I,\\
&(I-\widetilde{G}H)^{-1}=A^{-1}\widetilde{A}^{-1}.
\end{align*}
Therefore, $I-\widetilde{G}\mathcal{R}(X)$ is invertible by SMWF and its inversion can be presented in the following form:
\begin{align*}
&(I-\widetilde{G}\mathcal{R}(X))^{-1}=(I-\widetilde{G}\overline{H}-\widetilde{G}A^HX(I+G\overline{X})^{-1}A)^{-1}\\
&=A^{-1}\widetilde{A}^{-1}-A^{-1}\widetilde{G}A^H\overline{X}\widetilde{A}^{-1}
=A^{-1}\widetilde{A}^{-1}-A^{-1}G\overline{X}\widetilde{A}^{-1}=A^{-1}(I+G\overline{X})\widetilde{A}^{-1}.
\end{align*}
This yields
\begin{align*}
T_X=(I+G\overline{X})^{-1}A=\widetilde{A}^{-1}(I+\widetilde{G}Z)
={U}_Z^{-1},
\end{align*}
and thus
\[
   \widehat{T}_X\widehat{U}_Z=\overline{T_X}T_X U_Z\overline{U_Z}=\overline{T_X}\,\overline{U_Z}=I.
   \]
In addition, it is easily to verify that
\[
\widetilde{R}_Z^{-1}=(R-B^HX(I+GX)^{-1}B)^{-1}=R^{-1}+R^{-1}B^HXBR^{-1}=R^{-1}R_XR^{-1}.
\]
This implies that $X\in\mathbb{P}$ (resp. $X\in\mathbb{E}$) is equivalent to $Z\in\widetilde{\mathbb{P}}$ (resp. $X\in\widetilde{\mathbb{E}}$). The proof is thus completed.
 % \item Observes that
%\begin{align*}
%T_XU_Y&=(I+G\overline{X})^{-1}A(I-\widehat{G}X)^{-1}\widehat{A}
%=(I+G\overline{X})^{-1}(\widehat{A}^{-1}A^{-1}-GX_A)^{-1}\\
%&=(I+G\overline{X})^{-1}(I+G(H_A-X_A))^{-1}.
%\end{align*}
%It is sufficient to show that
%\[
%G(\overline{X}+H_A-X_A+\overline{X}G(H_A-X_A))\geq 0.
%\]

  \item
  % We will provide the proof when $\mathbb{R}_\leq\cap\mathbb{P}\neq\emptyset$  and the other assumption there can be proved in a similar way.
   %Assume $X\in\mathbb{R}_\leq\cap\mathbb{P}$. Note that $(I+G\overline{X})^{-1}G=B(R+B^{H}\overline{X}B)^{-1}B^{H}>0$.
      First, the proof of the relations $$\mathbb{R}_\leq\cap\mathbb{P}\subseteq\widehat{\mathbb{P}}\subseteq{\mathbb{P}}$$  can follow immediately from the definition of $\mathbb{P}$, $\widetilde{\mathbb{P}}$ and $\widehat{\mathbb{P}}$, thus the proof is omitted. On the other hand,
       we have the following inequality:
%\begin{subequations}\label{ine}
 \begin{align}
X_A-H_A-\overline{X}&=\overline{X}((I+G\overline{X})^{-1}-I)= -\overline{X}(I+G\overline{X})^{-1}G\overline{X}\nonumber\\
  &=-\overline{X}
  B(R+B^{H}\overline{X}B)^{-1}B^{H}\overline{X}\leq 0.\label{ine1}
  %&G(I+(H_A-X_A)G)^{-1}=BR^{-1}(I+B^{H}(H_A-X_A)BR^{-1})^{-1}B^H\nonumber\\
%  &=B(R+B^{H}(H_A-X_A)B)^{-1}B^H\geq B(R-B^{H}\overline{X}(I+BR^{-1}B^H\overline{X})^{-1}B)^{-1}B^H\nonumber\\
%  &=B(R-B^{H}\overline{X}B(R+B^H\overline{X}B)^{-1}R)^{-1}B^H= BR(R+B^{H}\overline{X}B)^{-1}RB^H\geq 0.
\end{align}
%\end{subequations}
It follows from \eqref{ine1} that $X\leq {R}(X)$ and $X\in\mathbb{P}$ if and only if
\[
\mathcal{M}_X:=\bb -X_A+H_A+\overline{X} & \overline{X}B\\B^H\overline{X} & R_{{X}}\eb\geq 0.
\]
%and
% $Y=-X\leq \mathcal{R}(Y)$ if and only if
%\[
%\mathcal{N}_Y:=\bb -X_A+H_A\Delta+\Delta^H \overline{X}\Delta & X_AB\\B^HX_A & R_{H_A-X_A}\eb\geq 0,
%\]
%where $\Delta:=I_n+GH_A$.
Let $L_1=\bb I_n & -B\\ 0 & I_n\eb$ and $Y=-X$. Then,
\begin{align*}
 \mathcal{N}_X:=L_1^H \mathcal{M}_X L_1=\bb -X_A+H_A+\overline{X}  & (X_A-H_A)B\\B^H(X_A-H_A) & R_{H_A-X_A}\eb\geq 0,
\end{align*}
which yields that $ R_{H_A-X_A}\geq 0$. Now notice that $\mathrm{rank}(R_{H_A-X_A})=\mathrm{rank}(\bb -B^H & I_n\eb\mathcal{M}_X\bb -B \\ I_n \eb)\geq n$ to conclude $R_{H_A-X_A}>0$.
This implies that $\widetilde{R}_Y:=\widetilde{R}+\widetilde{B}^H Y\widetilde{B}=R_{H_A-X_A}>0$, i.e., $Y\in\widetilde{\mathbb{P}}$.
On the other hand, the Schur complement of $\mathcal{N}_X$ with respect to $R_{H_A-X_A}$ is nonnegative, that is,
\[
\mathcal{N}_X/R_{H_A-X_A}:=\overline{X}-X_A+H_A-(X_A-H_A)(BR_{H_A-X_A}^{-1}B^H)(X_A-H_A)\geq 0.
\]
Obviously, the following identities hold:
\begin{subequations}\label{der1}
\begin{align}
  BR_{H_A-X_A}^{-1}B^H&=(I+G(H_A-X_A))^{-1}G,\\
 (I+G(H_A-X_A))^{-1}&=A(I-\widetilde{G}X)^{-1}\widetilde{A},\\
 \widetilde{A}^HX&=(I+H_A G)^{-1} X_A A,\\ %\widetilde{H}\widetilde{A}^{-1}&=H_AA,\\
 \widetilde{H}&=(I+H_AG)^{-1}H_A=H_A(I+H_AG)^{-1}.
\end{align}
\end{subequations}
Thus, the following derivation can be obtained by using~\eqref{der1}
\begin{align*}
&\mathcal{N}_X/R_{H_A-X_A}=\overline{X}+(H_A -X_A)(I-(I+G(H_A-X_A))^{-1}(G(X_A-H_A)))\\
&=\overline{X}+(H_A -X_A)(I+G(H_A-X_A))^{-1}
=\overline{X}+(H_A -X_A)A(I-\widetilde{G}X)^{-1}\widetilde{A}\\
&=\overline{X}+(H_AA-(I+H_AG)\widetilde{A}^HX)(I-\widetilde{G}X)^{-1}\widetilde{A}\\
&=\overline{X}+(H_AA(I-A^{-1}G\widetilde{A}^HX)(I-\widetilde{G}X)^{-1}
\widetilde{A})-\widetilde{A}^HX(I-\widetilde{G}X)^{-1}\widetilde{A}\\
&=\overline{X}+\widetilde{H}-\widetilde{A}^HX(I-\widetilde{G}X)^{-1}\widetilde{A}
=-\overline{Y}+\widetilde{\mathcal{R}}(Y)\geq 0,
\end{align*}
where $Y=-X$. Namely, $Y\in\widetilde{\mathbb{R}}_\leq\cap\widetilde{\mathbb{P}}$. The converse can be proved similarly.
\item
We only have to verify the first equality since the remainder is similar. Applying Proposition~\ref{pro3} we have $Z=-\mathcal{R}(X)$ if and only if $X=-\overline{\widetilde{\mathcal{R}}}(Z)$. Therefore, $X\in\mathbb{R}_\geq$ is equivalent to $Z\in\widetilde{\mathbb{R}}_\geq$ since $X-\mathcal{R}(X)=Z-\overline{\widetilde{\mathcal{R}}}(Z)$. It is easy to check that $\widetilde{G},\widetilde{H}\geq 0$ if $G,H\geq 0$. Namely, $Z\in-\mathbb{N}_n$ if $X\in\mathbb{N}_n$. We conclude  that $-\mathcal{R}(\mathbb{R}_\geq\cap\mathbb{N}_n)\subseteq\widetilde{\mathbb{R}}_\geq\cap-\mathbb{N}_n$.
Its converse is also true.

\item
%We only have to verify ~\eqref{f4} since the remainder~\eqref{f5} is similar.
We will provide the proof when $\mathbb{T}\neq\emptyset$  and the other assumption there can be proved in a similar way.
The first conclusion of \eqref{f4}  is a direct consequence of the identity  ~\eqref{f1} and the second conclusion of \eqref{f4} can follow immediately from the definition, and thus the proof is completed.
%\item Let $Z=-\mathcal{R}(X)$. It is easily to verify that
%\[
%\widetilde{R}_Z^{-1}=(R-B^HX(I+GX)^{-1}B)^{-1}=R^{-1}+R^{-1}B^HXBR^{-1}=R^{-1}R_XR^{-1}.
%\]
%
%The remainder of the proof immediately follows from the paper
\item
%We will provide the proof when $\mathbb{T}\neq\emptyset$  and the other assumption there can be proved in a similar way.
Our main approach in what follows is to establish two interesting equalities
for the connection of $\mathbb{U}_\geq$ with $\widetilde{\mathbb{U}}_\geq$ and $\widehat{\mathbb{U}}_\geq$ to guarantee \eqref{f6} holds. First, observe that
\begin{align*}
X+Z&=X-\mathcal{R}(X)=\mathcal{C}_{T_X}(X)-H_X,\\
   &=-\overline{\widetilde{\mathcal{R}}}(Z)+Z=\mathcal{C}_{\overline{U}_Z}(Z)-\overline{\widehat{H}}_Z.
\end{align*}
Therefore,
\begin{align*}
H_X=-Z-T_X^H \overline{X} T_X,\,\widehat{H}_Z=-\overline{X}-U_Z^H Z U_Z,
\end{align*}
thus
\begin{align*}
U_Z^H H_X U_Z=-U_Z^HZU_Z-\overline{X}=\widehat{H}_Z
\end{align*}
and vice versa. We continue in this fashion obtaining the first equality
\begin{subequations}
\begin{align}\label{f8}
\mathcal{C}_{\overline{U}_Z}(Y)-\overline{\widehat{H}}_Z
=\overline{U}_Z^H(\overline{\mathcal{C}}_{T_X}(-Y)-\overline{H}_X)\overline{U}_Z
\end{align}
for any $Y\in\mathbb{H}_n$, from which the first result $\mathbb{U}_\geq=-\widetilde{\mathbb{U}}_\geq$ follows.
Next, we show that
$\widehat{H}_{\widehat{F}_X}
=\widehat{H}+\widehat{F}_X^H\widehat{R}\widehat{F}_X$.
%Recall that
%$\widehat{F}_X\equiv\bb \overline{F_X} T_{\mathcal{R}(X)}\\ F_{\mathcal{R}(X)}-F_H\eb$ and
%      $\widehat{T}_X\equiv\widehat{A}-\widehat{B}\widehat{F}_X=\overline{T_X}T_{\mathcal{R}(X)}$ if $X$,$\mathcal{R}(X)\in\mathrm{dom}(\mathcal{R})$.
%Let $Y\in{\mathbb{S}}_{\geq}$. Namely, $\mathcal{C}_{T_X} (Y)\geq H_{F}=H+F^HRF$ for some $F\in\mathbb{C}^{m\times m}$ with $\rho(\widehat{A}_F)<1$, or
% \begin{align*}%\label{111}
% \mathcal{S}_{\widehat{A}_F}(X)\geq H_{F}+A_F ^H\overline{H_{F}}A_F .
% \end{align*}
Following the formula~\eqref{Req-a} we have
\begin{align*}
  &\mathcal{K}_{F_X}(H)-\mathcal{K}_{F_X}(0)=(H-R(H)-\mathcal{C}_{T_X}(H)+H_{T_X})-(0-R(0)-\mathcal{C}_{T_X}(0)  +H_{F_X})\\&=T_X^H\overline{H}T_X-A^H\overline{H}(I+G\overline{H})^{-1}A.
\end{align*}
Hence,
  \begin{align*}%\label{789}
&H_{F_X}+T_X ^H\overline{H}_{F_X}T_X=H+\mathcal{K}_F(0)+T_X ^H\overline{H}T_X+T_X ^H\overline{\mathcal{K}}_F(0)T_X \\
&=H+\mathcal{K}_F(0)+ \mathcal{K}_F(H)-\mathcal{K}_F(0)+A^H\overline{H}(I+G\overline{H})^{-1}A+T_X ^H\overline{\mathcal{K}}_F(0)T_X \\
&=H+A^H\overline{H}(I+G\overline{H})^{-1}A+T_X ^H\overline{\mathcal{K}}_F(0)T_X + \mathcal{K}_F(H)\\
&=\widehat{H}+\widehat{F}_X^H\widehat{R}\widehat{F}_X=\widehat{H}_{\widehat{F}_X}.
 \end{align*}
 %where $\widehat{F}:=\bb \overline{F}T_X \\ F-F_H\eb\in\mathbb{C}^{2m\times n}$.
 We obtain that the second equality
\begin{align}\label{f9}
  \mathcal{C}_{T_X}(Y)-H_{F_X} &=\mathcal{S}_{\widehat{T}_X}(Y)-H_{\widehat{F}_X}
  -T_X^H(\overline{\mathcal{C}}_{T_X}(Y)-\overline{H}_{F_X} )T_X,
\end{align}
\end{subequations}
 from which the second result $\mathbb{U}_\geq\subseteq\widehat{\mathbb{U}}_\geq$ follows. The proof is thus
completed.

\end{enumerate}
\end{proof}

\subsection{The existence of $\mathbf{X_m}$ and $\mathbf{Y_M}$}\label{31}
With the preceding preliminary, the first main results of this paper are
summarized in the following theorem.
\begin{Theorem}\label{mainthm}
The existence of four extremal solutions of CDARE~\eqref{cdare} can be described by
the following statements:
\begin{enumerate}
\item Assume that $\mathbb{R}_{\leq}\cap\mathbb{P}\neq\emptyset$.
\begin{enumerate}
  \item[(1a)] If $\mathbb{T}\neq\emptyset$, then $\mathbf{X_M}=\max(\mathbb{R}_{=}\cap\mathbb{P}$) exists.
  \item[(1b)] If $\mathbb{O}\neq\emptyset$ and $\overline{H}_A\in\mathrm{dom}(\mathbb{R})$, then $\mathbf{X_m}=\min(\mathbb{R}_{=}\cap\mathbb{P}$) exists.
\end{enumerate}
\item Assume that $\mathbb{R}_{\geq}\cap\mathbb{E}\neq\emptyset$.
\begin{enumerate}
  \item[(2a)] If $\mathbb{O}\neq\emptyset$ and $\overline{H}_A\in\mathrm{dom}(\mathbb{R})$, then $\mathbf{Y_M}=\max(\mathbb{R}_{=}\cap\mathbb{E}$) exists.
  \item[(2b)] If $\mathbb{T}\neq\emptyset$, then $\mathbf{Y_m}=\min(\mathbb{R}_{=}\cap\mathbb{E}$) exists.
\end{enumerate}
\end{enumerate}
\end{Theorem}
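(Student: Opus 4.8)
The plan is to split the four claims into two groups. Parts~(1a) and (2b) are essentially a restatement of the already-established Theorem~\ref{thm3p1}: when $\mathbb{T}\neq\emptyset$ the convergence region $\mathbb{U}_\geq$ (resp.\ $\mathbb{U}_\leq$) is nonempty, so part~1 (resp.\ part~2) of Theorem~\ref{thm3p1} produces the maximal element of $\mathbb{R}_\leq\cap\mathbb{P}$ (resp.\ the minimal element of $\mathbb{R}_\geq\cap\mathbb{E}$) as the limit of the FPI~\eqref{fpi}. Since this extremal element lies in $\mathbb{R}_=$ and $\mathbb{R}_=\cap\mathbb{P}\subseteq\mathbb{R}_\leq\cap\mathbb{P}$ (resp.\ $\mathbb{R}_=\cap\mathbb{E}\subseteq\mathbb{R}_\geq\cap\mathbb{E}$), it is automatically $\mathbf{X_M}=\max(\mathbb{R}_=\cap\mathbb{P})$ (resp.\ $\mathbf{Y_m}=\min(\mathbb{R}_=\cap\mathbb{E})$), so these two cases require no new argument.

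The genuinely new claims are (1b) and (2a), and for these I would pass to the dual-CDARE~\eqref{dual}. The key observation is that the iteration $X_{k+1}=-\mathcal{R}^{-1}(-X_k)$ in Table~\ref{Fx} is exactly the FPI for the dual equation: by Proposition~\ref{pro3} one has $\mathcal{R}^{-1}(Y)=-\overline{\widetilde{\mathcal{R}}}(-Y)$, whence $-\mathcal{R}^{-1}(-X_k)=\overline{\widetilde{\mathcal{R}}}(X_k)$. The hypothesis $\overline{H}_A\in\mathrm{dom}(\mathcal{R})$ forces $A$ to be nonsingular (since $H_A=A^{-H}HA^{-1}$) and makes $\mathcal{F}$ an involution, so the dual-CDARE is itself a CDARE of the same type, and Theorem~\ref{thm3p1} applies to it verbatim with all quantities carrying tildes.

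For (1b) I would first transfer the hypotheses through Proposition~\ref{pro6}: by \eqref{f2} we have $\mathbb{R}_\leq\cap\mathbb{P}=-(\widetilde{\mathbb{R}}_\leq\cap\widetilde{\mathbb{P}})$, so $\mathbb{R}_\leq\cap\mathbb{P}\neq\emptyset$ yields $\widetilde{\mathbb{R}}_\leq\cap\widetilde{\mathbb{P}}\neq\emptyset$, while the correspondence in \eqref{f4} identifies the assumption $\mathbb{O}\neq\emptyset$ with $\widetilde{\mathbb{T}}\neq\emptyset$. Applying part~1 of Theorem~\ref{thm3p1} to the dual then produces $\mathbf{\widetilde{X}_M}=\max(\widetilde{\mathbb{R}}_=\cap\widetilde{\mathbb{P}})$ as the limit of $X_{k+1}=\overline{\widetilde{\mathcal{R}}}(X_k)$ started in $\widetilde{\mathbb{U}}_\geq$. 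Because $X\mapsto -X$ carries $\widetilde{\mathbb{R}}_=\cap\widetilde{\mathbb{P}}$ onto $\mathbb{R}_=\cap\mathbb{P}$ and reverses the Loewner order, the maximizer of the former is the minimizer of the latter, i.e.\ $\mathbf{X_m}=-\mathbf{\widetilde{X}_M}=\min(\mathbb{R}_=\cap\mathbb{P})$, and the convergence region $\widetilde{\mathbb{U}}_\geq$ pulls back under negation to the set $-\mathbb{V}_\geq$ recorded in Table~\ref{Fx} via \eqref{f6}. Case (2a) is entirely analogous: the resp.\ form of \eqref{f2} turns $\mathbb{R}_\geq\cap\mathbb{E}\neq\emptyset$ into $\widetilde{\mathbb{R}}_\geq\cap\widetilde{\mathbb{E}}\neq\emptyset$, part~2 of Theorem~\ref{thm3p1} applied to the dual yields $\mathbf{\widetilde{Y}_m}=\min(\widetilde{\mathbb{R}}_=\cap\widetilde{\mathbb{E}})$, and negation gives $\mathbf{Y_M}=-\mathbf{\widetilde{Y}_m}=\max(\mathbb{R}_=\cap\mathbb{E})$.

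The heart of the argument is thus not the iteration itself but the dictionary between the original and dual problems, and the main obstacle is to make that dictionary airtight: one must verify that negation is an order-reversing bijection between the relevant solution sets, that the transformations of Proposition~\ref{pro6} genuinely send the nonemptiness hypotheses ($\mathbb{R}_\leq\cap\mathbb{P}\neq\emptyset$, $\mathbb{O}\neq\emptyset$) to the tilde-hypotheses required by Theorem~\ref{thm3p1}, and that the iterates remain in $\mathrm{dom}(\widetilde{\mathcal{R}})$ throughout. Since all of these are precisely the content of Propositions~\ref{pro3} and \ref{pro6}, the theorem follows once those are in hand by assembling them; the most delicate point to watch is the consistent bookkeeping of the conjugation in $\overline{\widetilde{\mathcal{R}}}$ when identifying $-\mathcal{R}^{-1}(-\,\cdot\,)$ with the dual FPI.
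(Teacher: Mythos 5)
Your proposal is correct and follows essentially the same route as the paper: cases (1a) and (2b) are dispatched directly by Theorem~\ref{thm3p1}, and cases (1b) and (2a) are obtained by passing to the dual-CDARE via Propositions~\ref{pro3} and \ref{pro6}, transferring the nonemptiness hypotheses ($\mathbb{R}_\leq\cap\mathbb{P}\neq\emptyset \Rightarrow \widetilde{\mathbb{R}}_\leq\cap\widetilde{\mathbb{P}}\neq\emptyset$, $\mathbb{O}\neq\emptyset \Rightarrow \widetilde{\mathbb{T}}\neq\emptyset$) and using that negation is an order-reversing bijection between the solution sets, so that $\mathbf{X_m}=-\mathbf{\widetilde{X}_M}$ and $\mathbf{Y_M}=-\mathbf{\widetilde{Y}_m}$. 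Your additional bookkeeping (identifying $-\mathcal{R}^{-1}(-\,\cdot\,)$ with the dual FPI and tracking the convergence regions) is consistent with the paper's Remark~\ref{rem2} and Table~\ref{Fx}, but is not needed beyond what the paper's own two-line argument already uses.
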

\begin{proof}
Two cases (1a) and (2b) can follow immediately from Theorem~\ref{thm3p1}. For the proof of case~(1b),
combining Proposition~\ref{pro3} and Proposition~\ref{pro6} gives that $\mathbf{X_m}=\min(\mathbb{R}_{=}\cap\mathbb{P})=-\max(-\mathbb{R}_{=}\cap\mathbb{P})
=-\max(\widetilde{\mathbb{R}}_{=}\cap\widetilde{\mathbb{P}})$ exists since $\widetilde{\mathbb{R}}_{\leq}\cap\widetilde{\mathbb{P}}\neq\emptyset$ and $\widetilde{\mathbb{T}}\neq\emptyset$. The rest is very similar to what we did to the case~(2a) above.
\end{proof}
\begin{Remark}\label{rem2}
Based on the FPI~\eqref{fpi}, i.e., $X_{k+1}=\mathcal{R}(X_k)$ with initial value $X_0\in\mathbb{H}_n$ and the dual-FPI, i.e., $Y_{k+1}=\widetilde{\mathcal{R}}(X_k)$ with initial value $Y_0\in\mathbb{H}_n$; some constructive methods are established to compute four extremal solutions $\mathbf{{X}_M}$, $\mathbf{{X}_m}$, $\mathbf{{Y}_M}$ and $\mathbf{{Y}_m}$:
\begin{align*}
\mathbf{{X}_M}&=\lim\limits_{k\rightarrow\infty}\mathcal{R}^{(k)}(X_0)
=\lim\limits_{k\rightarrow\infty}X_k,\quad\forall{{X}_0}\in\mathbb{U}_\geq.  \\
\mathbf{{X}_m}&=-\lim\limits_{k\rightarrow\infty}\widetilde{\mathcal{R}}^{(k)}(X_0)
=-\lim\limits_{k\rightarrow\infty}Y_k,\quad\forall{{X}_0}\in\mathbb{V}_\geq=-\mathbb{U}_\geq.  \\
\mathbf{{Y}_M}&=-\lim\limits_{k\rightarrow\infty}\widetilde{\mathcal{R}}^{(k)}(X_0)
=-\lim\limits_{k\rightarrow\infty}Y_k,\quad\forall{{Y}_0}\in\mathbb{V}_\leq=-\mathbb{U}_\leq. \\
\mathbf{{Y}_M}&=\lim\limits_{k\rightarrow\infty}\mathcal{R}^{(k)}(X_0)
=\lim\limits_{k\rightarrow\infty}X_k,\quad\forall{{Y}_0}\in\mathbb{U}_\leq.  \end{align*}
We notice that $Y_{k+1}=\widetilde{\mathcal{R}}(Y_k)=-\mathcal{R}^{-1}(-Y_k)$.
\end{Remark}
\begin{Proposition}\label{pro33}
\par\noindent
Assume that $A$ is nonsingular. When $H,\overline{H}_A\in \mathrm{dom}(\mathcal{R})$ some useful statements are given.
\begin{enumerate}
 % \item
%  $\mathbb{T}=-\overline{\mathcal{D}}(\widetilde{\mathbb{T}})\subseteq\widehat{\mathbb{O}}$
%  and $\mathbb{O}=-\overline{\mathcal{D}}(\widetilde{\mathbb{T}})\subseteq\widehat{\mathbb{O}}$.
%  \item
%  $\mathbb{U}_\geq=-\overline{\widetilde{\mathbb{U}}}_\geq\subseteq\widehat{\mathbb{U}}_\geq$ and $\mathbb{U}_\leq=-\overline{\widetilde{\mathbb{U}}}_\leq\subseteq\widehat{\mathbb{U}}_\leq
%  $.
%  \item
%  $\mathbb{R}_\leq\cap\mathbb{P}=-(\widetilde{\mathbb{R}}_\leq\cap\widetilde{\mathbb{P}})\subseteq\widehat{\mathbb{P}}$ and  $\mathbb{R}_\geq\cap\mathbb{E}=-(\widetilde{\mathbb{R}}_\geq\cap\widetilde{\mathbb{E}})\subseteq\widehat{\mathbb{E}}$.

  \item When $\mathbb{R}_\leq\cap\mathbb{P}\neq\emptyset$, then $\mathbf{X_M}=\mathbf{\widehat{X}_M}=-\mathbf{\widetilde{X}_m}$ if $\mathbb{T}\neq\emptyset$, and $\mathbf{Y_m}=\mathbf{\widehat{Y}_m}=-\mathbf{\widetilde{Y}_M}$ if $\mathbb{O}\neq\emptyset$.
  \item When $\mathbb{R}_\geq\cap\mathbb{E}\neq\emptyset$, then $\mathbf{Y_M}=\mathbf{\widehat{Y}_M}=-\mathbf{\widetilde{Y}_m}$ if $\mathbb{T}\neq\emptyset$, and $\mathbf{X_m}=\mathbf{\widehat{X}_m}=-\mathbf{\widetilde{X}_M}$ if $\mathbb{O}\neq\emptyset$.
% \item $\mathbf{X_M}=\mathbf{\widehat{X}_M}=-\mathbf{\widetilde{X}_m}\geq-\mathbf{\widetilde{X}_M}=\mathbf{\widehat{X}_m}=\mathbf{X_m}$ when $\mathbb{R}_\leq\cap\mathbb{P}\neq\emptyset$.
%\item $\mathbf{Y_M}=\mathbf{\widehat{Y}_M}=-\mathbf{\widetilde{Y}_m}\geq-\mathbf{\widetilde{Y}_M}=\mathbf{\widehat{Y}_m}=\mathbf{Y_m}$ when $\mathbb{R}_\geq\cap\mathbb{E}\neq\emptyset$.
%\item $\mathbf{X_{a.s}}=\mathbf{\widehat{X}_{a.s}}=-\mathbf{\widetilde{X}_{a.s}}$.
%\item $\mathbf{X_{a.a.s}}=\mathbf{\widehat{X}_{a.a.s}}=-\mathbf{\widetilde{X}_{a.a.s}}$.
\end{enumerate}
\end{Proposition}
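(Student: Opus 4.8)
The plan is to deduce every stated identity from a single dictionary relating the solution sets of the CDARE~\eqref{cdare}, its dual-CDARE~\eqref{dual} and its transformed-DARE~\eqref{coeff-dare2}, a dictionary already assembled in Propositions~\ref{pro3} and \ref{pro6}; once the extremal elements are matched up and existence is transferred, the proposition becomes a bookkeeping corollary. I would treat the four displayed cases by one argument, interchanging $\max/\min$, $\mathbb{P}/\mathbb{E}$ and $\mathbb{T}/\mathbb{O}$, and I would separate the two kinds of equalities: those against $-\mathbf{\widetilde{\cdot}}$ (the dual) and those against $\mathbf{\widehat{\cdot}}$ (the transformed equation).

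For the dual identities I would first establish the set equalities $\mathbb{R}_=\cap\mathbb{P}=-(\widetilde{\mathbb{R}}_=\cap\widetilde{\mathbb{P}})$ and $\mathbb{R}_=\cap\mathbb{E}=-(\widetilde{\mathbb{R}}_=\cap\widetilde{\mathbb{E}})$. The membership $X\in\mathbb{R}_=\Leftrightarrow -X\in\widetilde{\mathbb{R}}_=$ is immediate from the involution relations of Proposition~\ref{pro3}(2b): at a fixed point $\mathcal{R}(X)=X$ the identity $(\widetilde{\mathcal{R}}\circ(-\mathcal{R}))(X)=-\overline{X}$ reads $\overline{\widetilde{\mathcal{R}}}(-X)=-X$, and the converse uses $(\mathcal{R}\circ(-\overline{\widetilde{\mathcal{R}}}))(Y)=-Y$. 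The sign equivalence of Proposition~\ref{pro6}(2), specialised to a solution (where $Z=-\mathcal{R}(X)=-X$), gives $X\in\mathbb{P}\Leftrightarrow -X\in\widetilde{\mathbb{P}}$ and $X\in\mathbb{E}\Leftrightarrow -X\in\widetilde{\mathbb{E}}$. Because $X\mapsto -X$ reverses the Loewner order, the maximum of each set is the negative of the minimum of its image, and reading this off yields $\mathbf{X_M}=-\mathbf{\widetilde{X}_m}$, $\mathbf{X_m}=-\mathbf{\widetilde{X}_M}$, $\mathbf{Y_M}=-\mathbf{\widetilde{Y}_m}$ and $\mathbf{Y_m}=-\mathbf{\widetilde{Y}_M}$, which are precisely the four dual equalities.

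For the transformed identities I would exploit that $\widehat{\mathcal{R}}=\mathcal{R}\circ\mathcal{R}$, so that $\mathbb{R}_=\subseteq\widehat{\mathbb{R}}_=$ and, by \eqref{f2}, $\mathbb{R}_\leq\cap\mathbb{P}\subseteq\widehat{\mathbb{P}}\subseteq\mathbb{P}$ (with the $\mathbb{E}$-analogue). Thus $\mathbf{X_M}\in\widehat{\mathbb{R}}_=\cap\widehat{\mathbb{P}}$, which already gives $\mathbf{X_M}\leq\mathbf{\widehat{X}_M}$. For the reverse I would run the iteration: choosing $X_0\in\mathbb{U}_\geq$ (nonempty under the case hypotheses), Theorem~\ref{thm3p1} makes the CDARE sequence $\{\mathcal{R}^{(k)}(X_0)\}$ converge to $\mathbf{X_M}$, while the transformed-DARE sequence from the same $X_0$ is exactly the even subsequence $\{\mathcal{R}^{(2k)}(X_0)\}$ and hence shares the limit $\mathbf{X_M}$; since $\mathbb{U}_\geq\subseteq\widehat{\mathbb{U}}_\geq$ by \eqref{f6}, the DARE analogue of Theorem~\ref{thm3p1} identifies that limit with $\mathbf{\widehat{X}_M}$, forcing $\mathbf{X_M}=\mathbf{\widehat{X}_M}$. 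The identity $\mathbf{Y_M}=\mathbf{\widehat{Y}_M}$ is obtained verbatim with $\mathbb{U}_\leq\subseteq\widehat{\mathbb{U}}_\leq$, and the two $\mathbb{O}$-cases $\mathbf{X_m}=\mathbf{\widehat{X}_m}$, $\mathbf{Y_m}=\mathbf{\widehat{Y}_m}$ follow by the same subsequence argument for the dual iteration $Y_{k+1}=\widetilde{\mathcal{R}}(Y_k)=-\mathcal{R}^{-1}(-Y_k)$ on the regions $\mathbb{V}_\geq\subseteq\widehat{\mathbb{V}}_\geq$, $\mathbb{V}_\leq\subseteq\widehat{\mathbb{V}}_\leq$ of \eqref{f6}, or equivalently by transporting the $\mathbb{T}$-cases through the dual via the already-proved dual equalities and the involutivity of $\mathcal{F}$.

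Existence must be threaded through each case: the required extremal solution (or its dual) is produced by Theorem~\ref{mainthm} or Theorem~\ref{thm3p1}, using $\mathbb{O}\neq\emptyset\Leftrightarrow\widetilde{\mathbb{T}}\neq\emptyset$ and $\mathbb{T}\neq\emptyset\Leftrightarrow\widetilde{\mathbb{O}}\neq\emptyset$ from \eqref{f4} together with the nonemptiness transfers of \eqref{f2} to convert the hypotheses on $\mathbb{R}_\leq\cap\mathbb{P}$, $\mathbb{R}_\geq\cap\mathbb{E}$, $\mathbb{T}$, $\mathbb{O}$ into existence statements for the dual- and transformed equations. I expect the main obstacle to be exactly this threading, in tandem with the reverse inequality $\mathbf{\widehat{X}_M}\leq\mathbf{X_M}$: one must rule out the maximal transformed-DARE solution being a genuine $2$-periodic point of $\mathcal{R}$ rather than a CDARE fixed point, and the even-subsequence/convergence-region argument above is the device that guarantees it, provided the DARE counterpart of Theorem~\ref{thm3p1} is in force and the relevant region ($\mathbb{U}_\geq$, $\mathbb{U}_\leq$, $\mathbb{V}_\geq$ or $\mathbb{V}_\leq$) is nonempty under the stated hypotheses.
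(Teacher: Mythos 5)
Your proposal is correct and follows essentially the same two-pronged strategy as the paper's own proof: the tilde-identities are obtained from the set correspondences of Propositions~\ref{pro3} and \ref{pro6} together with order reversal under $X\mapsto -X$, and the hat-identities are obtained by running the FPI from $X_0$ in $\mathbb{U}_\geq$ (resp.\ $\mathbb{U}_\leq$) and observing that $\widehat{\mathcal{R}}^{(k)}(X_0)=\mathcal{R}^{(2k)}(X_0)$ is a subsequence sharing the limit, with $\mathbb{U}\subseteq\widehat{\mathbb{U}}$ from \eqref{f6} and the DARE counterpart of Theorem~\ref{thm3p1} identifying that limit with the transformed-DARE extremal solution. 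Your extra touches (the one-sided inequality via $\mathbb{R}_=\cap\mathbb{P}\subseteq\widehat{\mathbb{R}}_=\cap\widehat{\mathbb{P}}$, and the explicit existence threading) only make explicit what the paper leaves implicit in its proof of part~2.
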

\begin{proof}
%\begin{enumerate}
% \item
%   \[
% T_X^H \widetilde{H}_Z T_X=H_X,\,  U_Z^H H_X U_Z=\widetilde{H}_Z.
%   \]
%  and
%  \[
%T_X^H \mathcal{C}_{U_Z}(Y) T_X= \mathcal{C}_{T_X}(-\overline{Y}),\,  U_Z^H \mathcal{C}_{T_X}(Y) U_Z= \mathcal{C}_{T_Z}(-\overline{Y}).
%  \]
%  Moreover,
%  \[
%\mathcal{C}_{T_X}(-{Y})-H_X=T_X^H\left(\mathcal{C}_{\overline{U_Z}}(Y)-\overline{\widetilde{H}_Z}\right)T_X ,\, \mathcal{C}_{\overline{U_Z}}(-{Y})-\overline{\widetilde{H}_Z}=U_Z^H\left(\mathcal{C}_{T_X}(Y)-{H}_X\right)U_Z.
%  \]
%  For any $Y\in\mathbb{H}_n$ and $X,\mathcal{R}(X)\in\mathrm{Dom}(R)$ we have
%\begin{align*}
%  \mathcal{C}_{T_X}(Y)-H_{X} &=\mathcal{S}_{\widehat{T}_X}(Y)-\widehat{H}_{\widehat{X}}
%  -T_X^H(\overline{\mathcal{C}_{T_X}(Y)-H_{X}} )T_X
%\end{align*}
%\item
We will prove the part~2, and the other part there can be proved in a similar way. Assume that $\mathbb{R}_\geq\cap\mathbb{E}\neq\emptyset$ and $\mathbb{T}\neq\emptyset$. By using Proposition~\ref{pro6}, recalling $\widetilde{\mathbb{R}}_\geq\cap\widetilde{\mathbb{E}}=-\mathbb{R}_\geq\cap\mathbb{E}\neq\emptyset$
and $\widetilde{\mathbb{O}}\neq\emptyset$, and the conditions that guarantee the existence of $\widetilde{\mathbf{Y}}_m$. In particular, we have deduced that
\[
{\mathbf{Y_M}}=\max(\mathbb{R}_=\cap\mathbb{E})=-\min(\mathbb{D}_=\cap\mathbb{E})=\mathbf{\widetilde{Y}_m}.
\]
In addition, $\mathbf{\widehat{Y}_M}$ exists since $\widehat{\mathbb{R}}_=\cap\widehat{\mathbb{E}}\neq\emptyset$
and $\widehat{\mathbb{O}}\neq\emptyset$. Moreover, for any $X_0\in\mathbb{U}_\leq\subseteq\widehat{\mathbb{U}}_\leq$, we obtain
\[
\mathbf{{Y}_M}=\lim\limits_{k\rightarrow\infty}\mathcal{R}^{(k)}(X_0)
=\mathcal{R}^{(2k)}(X_0)=\widehat{\mathcal{R}}^{(k)}(X_0)=\mathbf{\widehat{Y}_M}.
\]

% there exists $\widehat{\mathbf{X}}_m \in \widehat{\mathbb{R}}_= \cap \widehat{\mathbb{P}}$ with $\mu (\widehat{T}_{\mathbf{X_m}})\geq 1$, since
%$\mathbb{O}\subseteq \widehat{\mathbb{O}}$ and $\widehat{\mathcal{R}}_= \cap\widehat{\mathbb{P}} \neq \emptyset$.
%
%On the other hand, the sequence $\{X_k\}_{k=0}^\infty$ generated by the FPI, $X_{k+1}=-\mathcal{R}^{-1}(-X_k)$ with any $X_0\in -\mathbb{V}_\geq$ converges nonincreasing to the minimal solution $\mathbf{X_m}$ of its induced DARE \eqref{dare2}.
%The sequence $\{Y_k\}_{k=0}^\infty$ generated by the FPI, $Y_{k+1}=-\widehat{\mathcal{R}}^{-1}(-Y_k)$ with any $Y_0\in -\widehat{\mathbb{V}}_\geq$ converges nonincreasing to the minimal solution $\widehat{\mathbf{X}}_m$ of its induced DARE~\ref{dare2}.
%
% Moreover, it is easily seen that $\mathbb{V}_\geq\subseteq\widehat{\mathbb{V}}_\geq$ and
%$Y_k  = X_{2k}\in \mathrm{dom}(\mathcal{R})$ for $k\geq 1$. That is, $\{Y_k\}_{k=0}^\infty$ is a subsequence of $\{X_k\}_{k=0}^\infty$
%and thus they must have the same limit ${\mathbf{X}}_m = \widehat{\mathbf{X}}_m$.

%\end{enumerate}
\end{proof}
Theorem~\ref{mainthm} just provides two sufficient conditions for the existence of $\mathbf{X_m}$ and $\mathbf{Y_M}$. It is interesting to point out that there exist other sufficient condition to guarantee the existence. To this end, we first give the following definition to provide a relatively sufficient condition.
\subsection{The existence of $\mathbf{Z_M}$,  $\mathbf{Z_m}$, $\mathbf{W_M}$ and  $\mathbf{W_m}$}
In the previous Subsection~\ref{31}, Theorem~\ref{mainthm} just provides two sufficient conditions for the existence of $\mathbf{X_m}$ and $\mathbf{Y_M}$. It is interesting to point out that there exist similar sufficient condition to guarantee the existence~$\mathbf{Z_M}$ and $\mathbf{Z_m}$. To this end, we first give the following result to provide a relatively sufficient condition.
\begin{Theorem}\label{thm0}
Assume that $\mathcal{R}$ is order preserving on  $\mathbb{N}_n$.
Then, $\mathbf{Z_M}$ exists if $\mathbf{U}(\mathbb{R}_\leq\cap\mathbb{N}_n)\cap
(\mathbb{R}_\geq\cap\mathbb{N}_n)\neq\emptyset$. In contrast $\mathbf{Z_m}$ exists if $\mathbf{L}(\mathbb{R}_\geq\cap\mathbb{N}_n)\cap
(\mathbb{R}_\leq\cap\mathbb{N}_n)\neq\emptyset$.
\end{Theorem}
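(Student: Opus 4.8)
The plan is to read the claim as an instance of the monotone fixed-point principle recorded in Lemma~\ref{11}, specialized to $K=\mathbb{N}_n$. I first treat $\mathbf{Z_M}$. I would pick any $X_0$ in the set $S:=\mathbf{U}(\mathbb{R}_\leq\cap\mathbb{N}_n)\cap(\mathbb{R}_\geq\cap\mathbb{N}_n)$, nonempty by hypothesis, and run the FPI~\eqref{fpi}, $X_{k+1}=\mathcal{R}(X_k)$. Since $X_0\in\mathbb{R}_\geq$ we have $X_0\geq\mathcal{R}(X_0)=X_1$, so the sequence should be nonincreasing; being trapped in $\mathbb{N}_n$ it is bounded below by $0$, and I expect it to descend to the largest fixed point. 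The crux is therefore to show that $S$ is $\mathcal{R}$-invariant, after which Lemma~\ref{11} applies directly. Note that only the $\mathcal{R}$-invariance branch of that lemma is available, since order preservation is assumed merely on $\mathbb{N}_n$ rather than on all of $\mathrm{dom}(\mathcal{R})$.

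To verify $\mathcal{R}(S)\subseteq S$, I would fix $X\in S$ and set $X':=\mathcal{R}(X)$; here $X'$ is well-defined because order preservation on $\mathbb{N}_n$ presupposes $\mathbb{N}_n\subseteq\mathrm{dom}(\mathcal{R})$, so no singularity of $R_{X'}$ can occur. Three checks then follow in turn. First, $X'\in\mathbf{U}(\mathbb{R}_\leq\cap\mathbb{N}_n)$: for any $Y\in\mathbb{R}_\leq\cap\mathbb{N}_n$ one has $Y\leq\mathcal{R}(Y)$ and $Y\leq X$ with $Y,X\in\mathbb{N}_n$, whence $Y\leq\mathcal{R}(Y)\leq\mathcal{R}(X)=X'$ by order preservation. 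Second, $X'\in\mathbb{N}_n$: since $H=CC^H\geq0$ gives $0\leq\mathcal{R}(0)=H$, the zero matrix lies in $\mathbb{R}_\leq\cap\mathbb{N}_n$, so the upper-bound property just proved forces $X'\geq0$. Third, $X'\in\mathbb{R}_\geq$: from $X\geq\mathcal{R}(X)=X'$ with $X,X'\in\mathbb{N}_n$, order preservation yields $X'=\mathcal{R}(X)\geq\mathcal{R}(X')$. Hence $X'\in S$, and $S\subseteq\mathbb{N}_n\subseteq\mathrm{dom}(\mathcal{R})$ is $\mathcal{R}$-invariant.

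With invariance in hand, Lemma~\ref{11} (using the inclusion $\mathbf{U}(\mathbb{R}_\leq\cap\mathbb{N}_n)\subseteq\mathbf{U}(\mathbb{R}_=\cap\mathbb{N}_n)$, valid because $\mathbb{R}_=\cap\mathbb{N}_n\subseteq\mathbb{R}_\leq\cap\mathbb{N}_n$) shows that $\{X_k\}$ is monotone and bounded, hence convergent to some $X_\ast$; by continuity of the rational map $\mathcal{R}$ on $\mathbb{N}_n$ the limit satisfies $X_\ast=\mathcal{R}(X_\ast)$, while $X_\ast=\lim X_k\geq0$ and $X_\ast$ remains an upper bound of $\mathbb{R}_=\cap\mathbb{N}_n$. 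Thus $X_\ast\in\mathbb{R}_=\cap\mathbb{N}_n$ dominates every element of that set, i.e. $X_\ast=\max(\mathbb{R}_=\cap\mathbb{N}_n)=\mathbf{Z_M}$. The statement for $\mathbf{Z_m}$ is entirely dual: I would take $X_0\in\mathbf{L}(\mathbb{R}_\geq\cap\mathbb{N}_n)\cap(\mathbb{R}_\leq\cap\mathbb{N}_n)$, prove the analogous set is $\mathcal{R}$-invariant (positivity now coming for free from $0\leq X\leq\mathcal{R}(X)=X'$), and use that the resulting nondecreasing sequence is bounded above by any fixed $W\in\mathbb{R}_\geq\cap\mathbb{N}_n$ to conclude convergence to the minimal fixed point.

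The main obstacle is precisely the $\mathcal{R}$-invariance verification, and within it the positivity step: because order preservation is hypothesized only on $\mathbb{N}_n$, one cannot let the orbit stray outside $\mathbb{N}_n$ and still compare iterates, so keeping every $X_k$ positive semidefinite is essential. For $\mathbf{Z_M}$ this is secured by the observation that $0\in\mathbb{R}_\leq\cap\mathbb{N}_n$ (a consequence of $H\geq0$), which both anchors the sequence from below and, via the upper-bound property, reinjects positivity at each step; for $\mathbf{Z_m}$ one should also note the implicit need for $\mathbb{R}_\geq\cap\mathbb{N}_n$ to be nonempty, as it supplies the upper bound that guarantees convergence of the increasing iteration.
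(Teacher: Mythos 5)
Your proof is correct and takes the same route as the paper: the paper's entire proof of this theorem is a one-line appeal to Lemma~\ref{11} (with a pointer to \cite[Theorem 2.1]{c22}), i.e.\ exactly the monotone fixed-point principle you instantiate with $K=\mathbb{N}_n$. The details you supply --- the $\mathcal{R}$-invariance of $S$ (needed because order preservation is assumed only on $\mathbb{N}_n$, so only that branch of Lemma~\ref{11} is available), the use of $0\in\mathbb{R}_\leq\cap\mathbb{N}_n$ to keep the iterates positive semidefinite, and the passage of the bound property and the fixed-point equation to the limit --- are precisely the hypotheses-checking the paper leaves implicit.
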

\begin{proof}
The result follows immediately based on Lemma~\ref{11}, or another illustration see \cite[Theorem 2.1]{c22} for more detail.
\end{proof}
In what follows, the second main
results concerning the sufficient conditions for the existence of maximum/minimum positive and negative semidefinite solutions of CDARE~\eqref{cdare} are stated.
\begin{Theorem}\label{thm1}
\par\noindent
%Let $\mathbb{P}:=\{X\in \mathrm{dom}(\mathcal{R}) \,|\, R_X>0\}$ and $\mathbb{E}:=\{X\in \mathrm{dom}(\mathcal{R}) \,|\, R_X<0\}$.
%Assume that $H\in\mathrm{dom}(\mathcal{R})$.
Suppose that $H\geq 0$.
%$\mathcal{R}$ is order preserving on  $\mathbb{N}_n$.
Then, the following statements hold. %$H\in\mathrm{dom}(\mathcal{R})$.
\begin{enumerate}
  \item %Suppose that $\mathcal{R}$ is order preserving on  $\mathbb{N}_n$. Then
      $\mathbf{Z_m}=\min(\mathbb{R}_{=}\cap\mathbb{N}_n)$ exists if $\mathcal{R}$ is order preserving on  $\mathbb{N}_n$ and $\mathbb{R}_{\geq}\cap\mathbb{N}_n\neq\emptyset$.
\item  $\mathbf{Z_M}=\max(\mathbb{R}_{=}\cap \mathbb{N}_n)$ exists if $\mathbb{T}\neq\emptyset$ and $R>0$.
      \item %Suppose that $\mathcal{R}$ is order preserving on  $\mathbb{N}_n$. Then
      $\mathbf{W_m}=\min(\mathbb{R}_{=}\cap-\mathbb{N}_n)$ exists if $\mathbb{O}\neq\emptyset$ and $R>0$.
\item  $\mathbf{W_M}=\max(\mathbb{R}_{=}\cap -\mathbb{N}_n)$ exists if $\widetilde{\mathcal{R}}$ is order preserving on  $\mathbb{N}_n$ and $\mathbb{R}_{\geq}\cap -\mathbb{N}_n\neq\emptyset$.
    %$\mathbb{R}_{=}\cap\mathbb{P}
%    \subseteq\mathbb{R}_{=}\cap\mathbb{N}_n$.

  %\item Suppose that $\mathcal{R}$ is order reversing on on $-\mathbb{N}_n$. Then $\mathbf{Z_M}=\max(\mathbb{R}_{=}\cap-\mathbb{N}_n)$ exist if $H\in\mathbb{E}$ and
%      $\mathbb{R}_{\leq}\cap\mathbb{E}\neq\emptyset$.
\end{enumerate}
\end{Theorem}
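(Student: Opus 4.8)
The plan is to treat all four statements as instances of the same monotone-convergence principle (Lemma~\ref{11}, equivalently Theorem~\ref{thm0}), exploiting one elementary observation that is available throughout because $H\geq 0$: namely $\mathcal{R}(0)=H\geq 0$, so $0\in\mathbb{R}_\leq$. Thus $0$ is always a subsolution, and when $R>0$ it also lies in $\mathbb{P}$ (since $R_0=R>0$) and moreover $\mathbb{N}_n\subseteq\mathbb{P}$ (because $R_X=R+B^H\overline{X}B\geq R>0$ for $X\geq 0$). Accordingly I would prove the two positive-semidefinite cases~(1) and~(2) directly on the CDARE, and obtain the two negative-semidefinite cases~(3) and~(4) by transporting cases~(2) and~(1) to the dual-CDARE via the involution of Proposition~\ref{pro3} and the set correspondences of Proposition~\ref{pro6}.

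For~(1) I would exhibit $0$ as a witness in $\mathbf{L}(\mathbb{R}_\geq\cap\mathbb{N}_n)\cap(\mathbb{R}_\leq\cap\mathbb{N}_n)$: indeed $0\in\mathbb{R}_\leq\cap\mathbb{N}_n$, and $0\leq X$ for every $X\in\mathbb{N}_n$, so $0$ lower-bounds $\mathbb{R}_\geq\cap\mathbb{N}_n$. Since $\mathcal{R}$ is order preserving on $\mathbb{N}_n$ and $\mathbb{R}_\geq\cap\mathbb{N}_n\neq\emptyset$ supplies an upper bound for the nondecreasing orbit of $0$, Theorem~\ref{thm0} (or Lemma~\ref{11}) yields a solution dominated by every element of $\mathbb{R}_=\cap\mathbb{N}_n$, i.e. $\mathbf{Z_m}$. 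For~(2), $0\in\mathbb{R}_\leq\cap\mathbb{P}$ shows $\mathbb{R}_\leq\cap\mathbb{P}\neq\emptyset$, so with $\mathbb{T}\neq\emptyset$ Theorem~\ref{thm3p1} produces $\mathbf{X_M}=\max(\mathbb{R}_=\cap\mathbb{P})$. As the maximal element of $\mathbb{R}_\leq\cap\mathbb{P}$ it dominates $0$, hence $\mathbf{X_M}\geq 0$; and since $\mathbb{N}_n\subseteq\mathbb{P}$ it dominates every positive-semidefinite solution while itself lying in $\mathbb{R}_=\cap\mathbb{N}_n$, so $\mathbf{Z_M}=\mathbf{X_M}$.

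For~(3) and~(4) I would pass to the dual-CDARE $\widetilde{\mathcal{R}}$, using that its solutions are the negatives of those of the CDARE with the order reversed, so that $\mathbf{W_m}=-\mathbf{\widetilde{Z}_M}$ and $\mathbf{W_M}=-\mathbf{\widetilde{Z}_m}$ in the notation of Proposition~\ref{pro33}. It then remains to check that the hypotheses transfer. In~(3), $R>0$ gives $G=BR^{-1}B^H\geq 0$, hence $\widetilde{R}=R+B^HH_AB\geq R>0$ and $\widetilde{H}\geq 0$ (using $H_A\geq 0$), while $\mathbb{O}\neq\emptyset$ is equivalent to $\widetilde{\mathbb{T}}\neq\emptyset$ by Proposition~\ref{pro6}; applying the already-proved case~(2) to $\widetilde{\mathcal{R}}$ gives $\mathbf{\widetilde{Z}_M}$, hence $\mathbf{W_m}$. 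In~(4), order-preservation of $\widetilde{\mathcal{R}}$ on $\mathbb{N}_n$ is assumed, and $\mathbb{R}_\geq\cap-\mathbb{N}_n\neq\emptyset$ translates through Proposition~\ref{pro6} into a dual positive-semidefinite supersolution bounding the iteration; applying case~(1) to $\widetilde{\mathcal{R}}$ gives $\mathbf{\widetilde{Z}_m}$, hence $\mathbf{W_M}$.

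I expect the main obstacle to be the bookkeeping of the duality transfer in~(3) and~(4): one must verify via Propositions~\ref{pro3} and~\ref{pro6} that the dual-CDARE is well defined ($A$ nonsingular, $\overline{H}_A\in\mathrm{dom}(\mathcal{R})$), that the extremal dual positive-semidefinite solution maps to the correct extremal negative-semidefinite solution with maximum and minimum interchanged, and---most delicately---that the ambient sign conditions required by cases~(1)--(2) genuinely persist on the dual side (the positivity $\widetilde{R}>0$ and $\widetilde{H}\geq 0$, and the equivalence $\mathbb{O}\neq\emptyset\Leftrightarrow\widetilde{\mathbb{T}}\neq\emptyset$). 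By contrast, the direct cases~(1)--(2) are comparatively routine once $0$ is recognized as the canonical subsolution supplied by $H\geq 0$.
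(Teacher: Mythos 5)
Your proposal is correct, and for parts 1, 3 and 4 it follows essentially the paper's own route: part 1 is the paper's argument (the paper exhibits the whole interval $[0,H]$ inside $\mathbf{L}(\mathbb{R}_\geq\cap\mathbb{N}_n)\cap(\mathbb{R}_\leq\cap\mathbb{N}_n)$, you exhibit only the point $0$, but the witness and the appeal to Theorem~\ref{thm0} are the same), and parts 3 and 4 are obtained in the paper exactly as you propose, via the identifications $\mathbf{W_m}=-\mathbf{\widetilde{Z}_M}$ and $\mathbf{W_M}=-\mathbf{\widetilde{Z}_m}$ and applying the already-proved parts 2 and 1 to the dual-CDARE. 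The genuine difference is part 2. The paper stays inside the monotone-iteration framework: it proves $\mathbb{U}_\geq\subseteq\mathbf{U}(\mathbb{R}_\leq\cap\mathbb{N}_n)\cap(\mathbb{R}_\geq\cap\mathbb{N}_n)$ (using that a Stein inequality $\mathcal{C}_{T_W}(X)\geq H_{F_W}\geq 0$ with $W\in\mathbb{T}$ forces $X\geq 0$, plus a comparison of Stein images to get the upper-bound property) and then invokes Theorem~\ref{thm0}, order preservation on $\mathbb{N}_n$ being supplied by $R>0$, $H\geq 0$. You instead note $0\in\mathbb{R}_\leq\cap\mathbb{P}$ and $\mathbb{N}_n\subseteq\mathbb{P}$ (both from $R>0$), obtain $\mathbf{X_M}=\max(\mathbb{R}_=\cap\mathbb{P})$ from Theorem~\ref{thm3p1} (with $\mathbb{T}\neq\emptyset$), and conclude $\mathbf{X_M}\geq 0$ and $\mathbf{Z_M}=\mathbf{X_M}$. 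Your route is shorter and yields the identity $\mathbf{Z_M}=\mathbf{X_M}$ already under $H\geq 0$, which the paper records only in Corollary~\ref{pro1} under the stronger hypothesis $H>0$; the paper's route, on the other hand, directly establishes the convergence-region claim of Table~\ref{Fx} (the FPI~\eqref{fpi} started anywhere in $\mathbb{U}_\geq$ converges to $\mathbf{Z_M}$), which your shortcut recovers only indirectly through Theorem~\ref{thm3p1}.

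Two remarks on the duality bookkeeping you flag as the main obstacle. First, in part 3 the nonsingularity of $A$ is automatic rather than an extra assumption: if $A$ were singular, then $T_X=(I+G\overline{X})^{-1}A$ and hence $\widehat{T}_X$ would be singular, so $\mu(\widehat{T}_X)=0$ and $\mathbb{O}=\emptyset$; moreover $R>0$ and $H\geq 0$ make $I+G\overline{H_A}$ nonsingular, so the dual-CDARE is well defined, and the reciprocal-spectrum relation $\widehat{T}_X\widehat{U}_Z=I$ of Proposition~\ref{pro6} gives the equivalence of $\mathbb{O}\neq\emptyset$ with $\widetilde{\mathbb{T}}\neq\emptyset$, exactly as you use. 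Second, in part 4, applying case 1 to $\widetilde{\mathcal{R}}$ needs $\widetilde{\mathcal{R}}(0)=\widetilde{H}\geq 0$, and this does \emph{not} follow from $H\geq 0$ alone: one needs $G\geq 0$ as well (it is the standing assumption $G,H\in\mathbb{N}_n$ in the corresponding block of Table~\ref{Fx}, but it is absent from the theorem's stated hypotheses). This implicit assumption is present in the paper's proof of part 4 too, so your argument inherits the same small gap as the original rather than introducing a new one.
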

\begin{proof}
%We will prove the first part and the second part can be proved in a similar way.
\begin{enumerate}
  \item We claim that
\[
[0,H]\subseteq  \mathbf{L}(\mathbb{R}_\geq\cap\mathbb{N}_n)\cap
(\mathbb{R}_\leq\cap\mathbb{N}_n).
\]
Indeed, when $X_\ast\in[0,H]$ we see that $\mathcal{R}(X_\ast)\geq\mathcal{R}(0)=H\geq X_\ast$ , and $X\geq\mathcal{R}(X)\geq\mathcal{R}(0)=H\geq X_\ast$ for all $X\in\mathbb{R}_\geq\cap\mathbb{N}_n$. The result is a consequence of the Theorem~\ref{thm0}.

  %the sequence $\{X_k\}_{k=0}^\infty$ generated by the FPI~\eqref{fpi} with $X_0\in[0,H]$ is nondecreasing and bounded from above. Indeed, first we see that $X_1=\mathcal{R}(X_0)\geq \mathcal{R}(0)\geq X_0\geq 0$ and $X_\ast\geq\mathcal{R}(X_\ast)\geq\mathcal{R}(0)=X_0$ for any $X_\ast\in\mathbb{R}_{\geq}\cap\mathbb{N}_n$.
%Moreover, for $k>0$, $X_{k+1}-X_k=\mathcal{R}(X_k)-\mathcal{R}(X_{k-1})\geq0$ if $X_k\geq X_{k-1}\geq 0$, and $X_\ast\geq\mathcal{R}(X_\ast)\geq\mathcal{R}(X_k)=X_{k+1}$ if $X_\ast\geq X_k$ and $X_\ast\in\mathbb{R}_{\geq}\cap\mathbb{N}_n.$ The proof of the first claim is completed by induction. Therefore, $\mathbf{Z_m}=X_\infty:=\lim\limits_{k\rightarrow\infty} X_k\in\mathbb{R}_{=}\cap\mathbb{N}_n$ exist.
  \item
We claim that
\[
\mathbb{U}_\geq\subseteq  \mathbf{U}(\mathbb{R}_\leq\cap\mathbb{N}_n)\cap
(\mathbb{R}_\geq\cap\mathbb{N}_n).
\]
Indeed, $\mathcal{C}_{T_{X_\mathbb{T}}}(X)\geq H_{X_\mathbb{T}}\geq 0$ implies that $X\geq 0$,
where $X_\mathbb{T}\in\mathbb{T}$. We conclude that
$\mathbb{U}_\geq\subseteq\mathbb{N}_n$ and thus $\mathbb{U}_\geq\subseteq\mathbb{R}_\geq$. Next,
 when $X_\ast\in\mathbb{U}_\geq$ and $X\in\mathbb{R}_\leq\cap\mathbb{N}_n$ we see that $\mathcal{C}_{T_{X_\mathbb{T}}}(X_\ast)\geq H_{X_\mathbb{T}}$ for some $X_\mathbb{T}\in\mathbb{T}$, and $H_{X_{\mathbb{T}}}\geq -(X-\mathcal{R}(X))+K(X_\mathbb{T},X)+\mathcal{C}_{T_{X_\mathbb{T}}}(X)\geq\mathcal{C}_{T_{X_\mathbb{T}}}(X)$.
In other words, we have $\mathcal{C}_{T_{X_\mathbb{T}}}(X_\ast)\geq \mathcal{C}_{T_{X_\mathbb{T}}}(X)$ and thus $\mathbb{U}_\geq\subseteq  \mathbf{U}(\mathbb{R}_\leq\cap\mathbb{N}_n)$. This fact together with Theorem~\ref{thm0} leads to the existence for $\mathbf{Z_M}$.
\item The
results can follow immediately from $\mathbf{W_m}=\min(\mathbb{R}_{=}\cap-\mathbb{N}_n)
    \equiv-\max(\widetilde{\mathbb{R}}_{=}\cap\mathbb{N}_n)=-\widetilde{\mathbf{Z}}_M$ exists, if $\widetilde{\mathbb{T}}=\mathbb{O}\neq\emptyset$.
\item
%Note that $\mathcal{D}(X)-\mathcal{D}(Y)=\overline{\mathcal{R}^{-1}(-X)}-\overline{\mathcal{R}^{-1}(-Y)}$, for any $X,Y\in\mathrm{Dom}(\mathcal{D})$.
%We conclude that $\mathcal{D}$ is order preserving on  $\mathbb{N}_n$ if
%$\mathcal{R}$ is order preserving on $-\mathbb{N}_n$.
Observe that
$\mathbf{W_M}=\max(\mathbb{R}_{=}\cap-\mathbb{N}_n)
    =-\min(\widetilde{\mathbb{R}}_{=}\cap\mathbb{N}_n)=-\widetilde{\mathbf{Z}}_m$ and $\widetilde{\mathbb{R}}_\geq\cap\mathbb{N}_n=-(\mathbb{R}_\geq\cap-\mathbb{N}_n)$. The proof is completed by applying part~1.
%it is shown that the sequence $\{X_k\}_{k=0}^\infty$ generated by the FPI~\eqref{fpi} with $X_0\in\mathbb{S}_\geq$ is nonincreasing and converges to $\mathbf{X_M}$. We claim that $\mathbf{Z_M}=\mathbf{X_M}$.  Indeed, $X_{k+1}=\mathcal{R}(X_k)\in\mathbb{S}_\geq\subseteq\mathbb{N}_n$.
%Let
\end{enumerate}

%Moveover, $X_\infty\leq X_\ast$ and $R_{X_\infty}\geq R_{X_1}>0$. That is, $X_\infty=\mathbf{X_m}\in\mathbb{R}_{=}\cap\mathbb{P}$ and the result is a consequently immediately from Theorem~\ref{1111}.
\end{proof}
\begin{Remark}\label{rem3}
In analogy with Remark~\ref{rem2}, four extremal solutions $\mathbf{{Z}_M}$, $\mathbf{{Z}_m}$, $\mathbf{{W}_M}$ and $\mathbf{{W}_m}$ can be computed using the following iterations:
\begin{align*}
\mathbf{{Z}_M}&=\lim\limits_{k\rightarrow\infty}\mathcal{R}^{(k)}(X_0)
=\lim\limits_{k\rightarrow\infty}X_k,\quad\forall{{X}_0}\in\mathbb{U}_\geq.  \\
\mathbf{{Z}_m}&=\lim\limits_{k\rightarrow\infty}\mathcal{R}^{(k)}(X_0)
=\lim\limits_{k\rightarrow\infty}X_k,\quad\forall{{X}_0}\in[0,H].  \\
\mathbf{{W}_M}&=-\lim\limits_{k\rightarrow\infty}\widetilde{\mathcal{R}}^{(k)}(X_0)
=-\lim\limits_{k\rightarrow\infty}Y_k,\quad\forall{{Y}_0}\in[0,\widetilde{H}]. \\
\mathbf{{W}_m}&=-\lim\limits_{k\rightarrow\infty}\widetilde{\mathcal{R}}^{(k)}(X_0)
=-\lim\limits_{k\rightarrow\infty}Y_k,\quad\forall{{Y}_0}\in-\mathbb{V}_\geq.  \end{align*}
\end{Remark}
\begin{Lemma}\label{lem11}
Let $C=A\oplus B\in\mathbb{C}^{(n+m)\times (n+m)}$ with $A\in\mathbb{C}^{n\times n}$ and $B\in\mathbb{C}^{m\times m}$. Then,
\begin{enumerate}
  \item $A\geq 0$ if $C\geq U^H B U$ for some $U\in\mathbb{C}^{m\times (n+m)}$.
  \item $B\leq 0$ if $C\leq V^H A V$ for some $V\in\mathbb{C}^{n\times (n+m)}$.
\end{enumerate}

%Assume that there exist two matrices $A\in\mathbb{C}^{n\times n}$ and $B\in\mathbb{C}^{m\times m}$ such that
%\begin{align*}
%V^H A V\geq  A\oplus B&\geq U^H B U,
%\end{align*}
%for two matrices $U\in\mathbb{C}^{m\times (n+m)}$ and $V\in\mathbb{C}^{n\times (n+m)}$. Then, $A\geq 0$ and $B\leq 0$.
\end{Lemma}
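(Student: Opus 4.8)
The plan is to unpack the matrix inequality into a statement about quadratic forms and then exploit the block-diagonal structure of $C$ through a judicious choice of test vector. Writing $w=\bb x\\ y\eb\in\mathbb{C}^{n+m}$ with $x\in\mathbb{C}^n$, $y\in\mathbb{C}^m$, and splitting $U=\bb U_1 & U_2\eb$ with $U_1\in\mathbb{C}^{m\times n}$, $U_2\in\mathbb{C}^{m\times m}$, the hypothesis $C\geq U^H B U$ in part~1 is equivalent to
$$x^H A x + y^H B y \;\geq\; (U_1 x+U_2 y)^H B\,(U_1 x+U_2 y)\qquad\text{for all } x,y.$$
The guiding idea is that if, for a given $x$, one can select $y$ so that $U_1x+U_2y$ coincides with $y$ itself, then the two $B$-terms cancel and $x^HAx\geq 0$ drops out. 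This would reduce everything to solving the linear equation $(I-U_2)y=U_1x$.

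The hard part is precisely that $I-U_2$ need not be invertible, so this fixed-point equation may be unsolvable for some $x$. I would circumvent this by relaxing the target relation $U_1x+U_2y=y$ to $U_1x+U_2y=\lambda y$ for a \emph{unimodular} scalar $\lambda$; the point is that $|\lambda|^2=1$ still forces the $B$-contributions to cancel. Concretely, since $\sigma(U_2)$ is a finite subset of $\mathbb{C}$, I can pick $\lambda=e^{i\theta}$ on the unit circle with $\lambda\notin\sigma(U_2)$, so that $\lambda I-U_2$ is invertible and $y:=(\lambda I-U_2)^{-1}U_1x$ is well defined and satisfies $U_1x+U_2y=\lambda y$. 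Substituting $w=\bb x\\ y\eb$ into the quadratic-form inequality then yields $x^HAx+y^HBy\geq |\lambda|^2 y^HBy=y^HBy$, i.e. $x^HAx\geq 0$. As $x\in\mathbb{C}^n$ is arbitrary, $A\geq 0$, proving part~1.

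For part~2 I would run the mirror-image argument. Writing $V=\bb V_1 & V_2\eb$ with $V_1\in\mathbb{C}^{n\times n}$, $V_2\in\mathbb{C}^{n\times m}$, the hypothesis $C\leq V^HAV$ reads $(V_1x+V_2y)^H A\,(V_1x+V_2y)\geq x^HAx+y^HBy$ for all $x,y$. Now I fix $y\in\mathbb{C}^m$ and choose a unimodular $\lambda\notin\sigma(V_1)$ (again possible since $\sigma(V_1)$ is finite), setting $x:=(\lambda I-V_1)^{-1}V_2y$ so that $V_1x+V_2y=\lambda x$. The inequality collapses to $x^HAx\geq x^HAx+y^HBy$, whence $y^HBy\leq 0$ for every $y$, i.e. $B\leq 0$. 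The only subtlety worth recording is that $A$ and $B$ are Hermitian, so that the passage between the matrix inequalities and the quadratic forms, and the cancellation produced by $|\lambda|^2=1$, are both legitimate; no appeal to Schur complements is needed, the whole weight of the argument resting on the freedom to slide $\lambda$ along the unit circle away from the finite spectrum.
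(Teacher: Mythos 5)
Your proof is correct and takes essentially the same approach as the paper's: both arguments partition $U=\begin{bmatrix} U_1 & U_2\end{bmatrix}$, choose a unimodular $\lambda\notin\sigma(U_2)$ (resp. $\lambda\notin\sigma(V_1)$ for part~2), solve $y=(\lambda I-U_2)^{-1}U_1x$ so that the image vector becomes $\lambda y$, and let $|\lambda|^2=1$ cancel the matching quadratic terms. The only difference is cosmetic—the paper phrases it as a contradiction starting from $x_1^HAx_1<0$, while you argue directly that $x^HAx\geq 0$ for every $x$.
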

\begin{proof}
We give only the proof of the first part, the same proof remains valid for second part. For the sake of simplicity, we partition $U$ as $U=\bb U_1 & U_2\eb$ with $U_1\in\mathbb{C}^{m\times n}$ and $U_2\in\mathbb{C}^{m\times m}$. If there exist nonzero vector $x_1\in\mathbb{C}^{n}$ so that $x_1^H A x_1< 0$. Choose a scalar $\lambda\not\in\sigma(U_2)$ and $|\lambda|=1$. Let $x=\bb x_1 \\ x_2\eb$ with $x_2=(\lambda I-U_2)^{-1}U_1x_1$. Thus, $Ux=U_1x_1+U_2x_2=(I+U_2(\lambda I-U_2)^{-1})U_1x_1=\lambda x_2$.
It follows from a direct computation that
\begin{align*}
x_1^H A x_1+x_2^H B x_2 =x^H (A\oplus B)x\geq (Ux)^H B(Ux)=x_2^H B x_2,
\end{align*}
which yields $x_1^H A x_1\geq 0$ and a contradiction occurs.
\end{proof}
\begin{Corollary}\label{pro1}
\par\noindent
%Let $\mathbb{P}:=\{X\in \mathrm{dom}(\mathcal{R}) \,|\, R_X>0\}$ and $\mathbb{E}:=\{X\in \mathrm{dom}(\mathcal{R}) \,|\, R_X<0\}$.
 When $R=\widehat{R}_1^H\widehat{R}_1>0$ and $H\geq 0$, then $\mathbb{R}_{\leq}\cap\mathbb{P}\neq\emptyset$ and $\mathbb{R}_{\geq}\subseteq\mathbb{P}$. Moreover, we have
       \begin{enumerate}
         \item $\mathbf{Z_m}$ exists if $\mathbb{R}_{\geq}\neq\emptyset$.
         \item When $A$ is nonsingular, $\mathbf{W_M}$ exists if $\mathbb{R}_{\geq}\cap -\mathbb{N}_n\neq\emptyset$.
         \item When $H>0$ and $A$ is nonsingular, $\mathbf{Z_M}=\mathbf{X_M}$ exists and $\mathbf{W_m}=\mathbf{X_m}$ exists. Moreover,
          \[
\mathbf{X_m}=\mathbf{W_m}\leq\mathbf{W_M}\leq 0\leq \mathbf{Z_m}\leq \mathbf{Z_M}= \mathbf{X_M}
          \]
       \end{enumerate}
        $(\mathbf{Y_M},\mathbf{Y_m})$ doesn't exist in above either case.
\end{Corollary}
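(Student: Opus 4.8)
The plan is to treat the corollary as an assembly of the existence results already established---Theorems~\ref{thm0}, \ref{thm1} and \ref{mainthm}, together with the involution of Proposition~\ref{pro3} and the set identities of Proposition~\ref{pro6}---so that the only genuinely new work lies in the opening two set relations and in the semidefinite identifications of item~(3). First I would settle the set relations, then read items~(1) and (2) directly off Theorem~\ref{thm1}, and finally use duality to deduce item~(3) and the chain of inequalities.

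For $\mathbb{R}_\leq\cap\mathbb P\neq\emptyset$ it suffices to exhibit $X=0$: indeed $R_0=R>0$ places $0\in\mathbb P$, while $\mathcal R(0)=H\geq0$ gives $0\leq\mathcal R(0)$, so $0\in\mathbb{R}_\leq\cap\mathbb P$. The relation $\mathbb{R}_\geq\subseteq\mathbb P$ is the heart of the matter, and I would prove it by an inertia count on the Hermitian block matrix $N:=\bb\overline X & \overline X B\\ B^H\overline X & R_X\eb$ attached to $X\in\mathbb{R}_\geq$. Writing $n_-(\cdot)$ for the number of negative eigenvalues of a Hermitian matrix, I would compute the inertia of $N$ in two ways. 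On one hand the congruence $N=\bb I & B\\ 0 & I\eb^{H}(\overline X\oplus R)\bb I & B\\ 0 & I\eb$ and $R>0$ give $n_-(N)=n_-(\overline X)=n_-(X)$. On the other hand $R_X$ is invertible (as $X\in\mathrm{dom}(\mathcal R)$), so Haynsworth additivity through the $R_X$-pivot yields $n_-(N)=n_-(R_X)+n_-(N/R_X)$, where a short manipulation identifies $N/R_X=\overline X(I+G\overline X)^{-1}$ and hence $\mathcal R(X)-H=A^H(N/R_X)A$. Since $X\geq\mathcal R(X)\geq\mathcal R(X)-H$ (using $H\geq0$), monotonicity of negative-eigenvalue counts gives $n_-(X)\leq n_-(\mathcal R(X)-H)\leq n_-(N/R_X)$, the last inequality because congruence by the possibly singular $A$ cannot create negative eigenvalues. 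Comparing the two expressions forces $n_-(R_X)=0$, and invertibility upgrades this to $R_X>0$, i.e. $X\in\mathbb P$. The final assertion of the corollary is then immediate: every solution lies in $\mathbb{R}_\geq$, so $\mathbb{R}_=\cap\mathbb E\subseteq\mathbb{R}_\geq\cap\mathbb E\subseteq\mathbb P\cap\mathbb E=\emptyset$, whence $\mathbb{R}_=\cap\mathbb E=\emptyset$ and neither $\mathbf{Y_M}$ nor $\mathbf{Y_m}$ can exist.

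For items~(1) and (2) I would invoke Theorem~\ref{thm1}(1) and (4). The operator $\mathcal R$ is order preserving on $\mathbb N_n$ by the Remark following Proposition~\ref{P1} (valid since $R>0$, $H\geq0$); for the dual equation the same Remark applies because $\widetilde R=R+B^H H_A B>0$ and $\widetilde H\geq0$ whenever $H\geq0$, so $\widetilde{\mathcal R}$ is order preserving on $\mathbb N_n$ as well. In item~(2) the set hypothesis $\mathbb{R}_\geq\cap-\mathbb N_n\neq\emptyset$ is exactly the one required, so $\mathbf{W_M}$ follows at once. Item~(3) then rests on the inclusion $\mathbb N_n\cap\mathrm{dom}(\mathcal R)\subseteq\mathbb P$ (because $X\geq0\Rightarrow R_X\geq R>0$), which gives $\mathbf{Z_M}=\max(\mathbb{R}_=\cap\mathbb N_n)\leq\max(\mathbb{R}_=\cap\mathbb P)=\mathbf{X_M}$. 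Since $\mathbf{X_M}\in\mathbb{R}_\geq$, item~(1) produces $\mathbf{Z_m}$ with $0\leq\mathbf{Z_m}\leq\mathbf{X_M}$, forcing $\mathbf{X_M}\geq0$; thus $\mathbf{X_M}\in\mathbb{R}_=\cap\mathbb N_n$ and $\mathbf{X_M}=\mathbf{Z_M}$. Applying the same reasoning to the dual-CDARE (where $\widetilde H>0$ because $H>0$ and $A$, $\widetilde A$ are nonsingular) and transporting through the identities $\mathbf{X_m}=-\mathbf{\widetilde{X}_M}$ (Proposition~\ref{pro33}) and $\mathbf{W_m}=-\mathbf{\widetilde{Z}_M}$ (Theorem~\ref{thm1}) yields $\mathbf{X_m}=\mathbf{W_m}$; the displayed ordering $\mathbf{X_m}=\mathbf{W_m}\leq\mathbf{W_M}\leq0\leq\mathbf{Z_m}\leq\mathbf{Z_M}=\mathbf{X_M}$ then follows from the memberships $\mathbf{W_M},\mathbf{W_m}\in-\mathbb N_n$ and $\mathbf{Z_M},\mathbf{Z_m}\in\mathbb N_n$ together with $\min\leq\max$.

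The step I expect to resist is not $\mathbb{R}_\geq\subseteq\mathbb P$, which the inertia argument settles cleanly, but the semidefinite bookkeeping that feeds Theorem~\ref{thm1}: because $\mathbb{R}_\geq$ is genuinely not contained in $\mathbb N_n$, one must still extract a \emph{positive semidefinite} supersolution, i.e. show $\mathbb{R}_\geq\cap\mathbb N_n\neq\emptyset$ from $\mathbb{R}_\geq\neq\emptyset$ in item~(1) (a monotone-convergence argument for the iteration started at $0$, whose boundedness is the delicate point), and dually confirm that the extremal elements of $\mathbb{R}_=\cap\mathbb P$ are automatically semidefinite once $H>0$. Keeping the involution of Proposition~\ref{pro3} consistent---so that the positive-definiteness and order-preservation statements transfer to the dual data without sign slips---will require care.
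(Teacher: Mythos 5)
Your proposal is correct in substance and follows the same skeleton as the paper's proof: exhibit $0\in\mathbb{R}_\leq\cap\mathbb{P}$, establish $\mathbb{R}_\geq\subseteq\mathbb{P}$ as the only hard new fact, read items (1)--(2) off Theorem~\ref{thm1}, identify the semidefinite extremal solutions with $\mathbf{X_M},\mathbf{X_m}$ partly by duality, and rule out $(\mathbf{Y_M},\mathbf{Y_m})$ via $\mathbb{R}_=\cap\mathbb{E}\subseteq\mathbb{R}_\geq\cap\mathbb{E}\subseteq\mathbb{P}\cap\mathbb{E}=\emptyset$. Two of your steps, however, take a genuinely different route. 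For $\mathbb{R}_\geq\subseteq\mathbb{P}$ the paper does not count inertia: it rewrites $X\geq\mathcal{R}(X)$ as $X\geq H+T_X^H(\overline{X}+\overline{X}G\overline{X})T_X$, deduces the block inequality $\bb R_X & 0\\ 0 & X\eb\geq\bb B^H\\ T_X^H\eb\overline{X}\bb B & T_X\eb$, and then invokes the ad hoc Lemma~\ref{lem11} on matrices of the form $C=A\oplus B\geq U^HBU$. Your Haynsworth/congruence computation --- $n_-(N)=n_-(X)$ from the congruence with $\overline{X}\oplus R$, versus $n_-(N)=n_-(R_X)+n_-(N/R_X)$ with $N/R_X=\overline{X}(I+G\overline{X})^{-1}$, together with $n_-(X)\leq n_-\bigl(A^H(N/R_X)A\bigr)\leq n_-(N/R_X)$ --- reaches the same conclusion with standard tools, and it quietly sidesteps a conjugation subtlety in the paper's appeal to Lemma~\ref{lem11}: there the matrix in the middle of $U^H(\cdot)U$ is $\overline{X}$ while the diagonal block of $C$ is $X$, a mismatch your inertia bookkeeping never has to confront. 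Likewise, for $\mathbf{Z_M}=\mathbf{X_M}$ the paper identifies the two as limits of the \emph{same} fixed-point iteration started in $\mathbb{U}_\geq$ (Remarks~\ref{rem2} and~\ref{rem3}), whereas you argue order-theoretically ($\mathbf{Z_M}\leq\mathbf{X_M}$ from $\mathbb{N}_n\cap\mathrm{dom}(\mathcal{R})\subseteq\mathbb{P}$, then $\mathbf{X_M}\geq\mathbf{Z_m}\geq0$ forces $\mathbf{X_M}\in\mathbb{R}_=\cap\mathbb{N}_n$); both are valid, and yours needs only the existence statements, not the convergence machinery.

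Two caveats. First, your item (3) invokes the existence of $\mathbf{X_M}$ and $\mathbf{Z_M}$, both of which require $\mathbb{T}\neq\emptyset$, but you never say where that comes from; the paper supplies it through Proposition~\ref{pro2}, which is exactly the role of the strengthened hypothesis $H>0$ --- this should be stated explicitly. Second, the step you flag as resistant in item (1) --- passing from $\mathbb{R}_\geq\neq\emptyset$ to the hypothesis $\mathbb{R}_\geq\cap\mathbb{N}_n\neq\emptyset$ actually demanded by Theorem~\ref{thm1}(1) --- is a real gap, but be aware that the paper's own proof does not close it either: it writes ``$\mathbb{R}_\geq\cap\mathbb{P}=\mathbb{R}_\geq\neq\emptyset$'' and cites Theorem~\ref{thm1} directly, silently substituting $\mathbb{P}$ for $\mathbb{N}_n$ (and, as you note, a supersolution that is merely in $\mathbb{P}$ need not bound the monotone iteration started at $0$). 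So on this point your proposal is not weaker than the paper; it makes explicit a looseness that the paper leaves implicit.
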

\begin{proof}
%W.L.O.G we assume that $R=I$ since $R+B^H X B=\widehat{R}^H(I+B^HXB)\widehat{R}$, where $R=\widehat{R}^H\widehat{R}$.
%We only focus on the proof of the first part. The proof of the second part can be given by using a similar argument.
The first result follows immediately from $0\in\mathbb{R}_{\leq}\cap\mathbb{P}$.
% Let $R=\widehat{R}_1^H\widehat{R}_1>0$. %Then $R+B^H X B>0$ for any $X\in \mathbb{R}_{\geq}$.
%Concerning part
Notice that $X\in \mathbb{R}_{\geq}$ is equivalent to $X\geq H+T_X^H(\overline{X}+\overline{X}G\overline{X})T_X$. A direct computation yields
\begin{align*}
  \bb R & -B^H \overline{X} T_X\\-T_X^H \overline{X} B & X-T_X^H \overline{X} T_X\eb &\geq \bb R & -B^H \overline{X} T_X\\-T_X^H \overline{X} B^H & T_X^H \overline{X}G\overline{X} T_X\eb
  =\bb {R}_1^H \\ -T_X^H \overline{X}{B}_1\eb \bb {R}_1 & -{B}_1^H \overline{X} T_X \eb \geq 0,
\end{align*}
where ${B}_1=B{R}_1^{-1}$. Therefore,
\begin{align*}
  \bb R+B^H \overline{X} B & 0\\0 & X\eb &\geq \bb B^H \overline{X}B  & B^H \overline{X} T_X\\T_X^H \overline{X} B & T_X^H \overline{X} T_X\eb=\bb B^H \\ T_X^H \eb \overline{X} \bb B & T_X \eb.
\end{align*}
From $0\not\in\sigma(R+B^H \overline{X} B)$ and Lemma~\ref{lem11} we conclude that $R+B^H \overline{X} B>0$ or $X\in\mathbb{P}$.
\begin{enumerate}
  \item
  From the fact
  $\mathbb{R}_\geq\cap\mathbb{P}=\mathbb{R}_\geq\neq\emptyset$ and
  $\mathcal{R}$ is order preserving on $\mathbb{N}_n$ if $G,H\geq 0$. The existence of $\mathbf{Z_m}$ is guaranteed by Theorem~\ref{thm1}.
  \item One can check that $\widetilde{G},\widetilde{H}\geq 0$ when $R>0$ and $H\geq 0$. Hence, $\widetilde{\mathcal{R}}$ is order preserving on  $\mathbb{N}_n$ and $\mathbb{R}_{\geq}\cap -\mathbb{N}_n=-(\widetilde{\mathbb{R}}_{\geq}\cap \mathbb{N}_n)\neq\emptyset$.
      The proof is thus completed.

%  \item It is sufficient to show that $\mathbb{T}\neq\emptyset$ and $\mathbb{O}\neq\emptyset$. The rest of the proof is a consequence of Theorem~\ref{thm1}.

  \item The first conclusion obtained from Proposition~\ref{pro2} and Theorem~\ref{thm1}. Moreover, $\mathbf{X_M}=\mathbf{Z_M}=\lim\limits_{k\rightarrow\infty} X_k$ with any $X_0\in\mathbb{U}_\geq$ and $\mathbf{X_m}=-\mathbf{\widetilde{X}_M}=-\mathbf{\widetilde{Z}_M}=\mathbf{Z_m}$.
       The remaining part immediately follows from the fact $\mathbb{R}_\geq\cap\mathbb{E}=\emptyset$.
\end{enumerate}
%It is obvious that
%\begin{align*}
%\mathbf{X_m}-T_{\mathbf{X_m}}^H \overline{\mathbf{X_m}} T_{\mathbf{X_m}}&=H+(\overline{\mathbf{X_m}} T_{\mathbf{X_m}})^H G(\overline{\mathbf{X_m}} T_{\mathbf{X_m}})\geq H,
%\end{align*}
%one deduces that
%\[
%\mathbf{X_m}\geq \widehat{T}_{\mathbf{X_m}}^H (\mathbf{X_m})\widehat{T}_{\mathbf{X_m}}+H\geq\sum\limits_{k=0}^m (\widehat{T}_{\mathbf{X_m}}^H)^k (H+\widehat{T}_{\mathbf{X_m}}^H \overline{H} \widehat{T}_{\mathbf{X_m}})\widehat{T}_{\mathbf{X_m}}^k.
%\]
%This clearly forces $\rho(\widehat{T}_{\mathbf{X_m}})<1$ or $\widehat{T}_{\mathbf{X_m}}\in\mathbb{T}$.

\end{proof}
%\begin{Remark}
%\begin{enumerate}
%  \item Assume that $A$ is nonsingular. Then $\mathbb{T}$ is nonempty if and only if $\widetilde{\mathbb{O}}$ is nonempty.
%\end{enumerate}
%\end{Remark}
\section{The definiteness of the matrix $R_X$} \label{sec4}
Our primary goal in this section is to study whether four extremal solutions $\mathbf{X_M}$, $\mathbf{X_m}$, $\mathbf{Y_M}$ and $\mathbf{Y_m}$ exist simultaneously. An interesting result is first provided to an useful case of coefficient matrices associated with the positive definite matrix $R$ and the positive semidefinite matrix $H$. In the more general case in which the nonsingular matrix $R$ and the Hermitian matrix $H$, CDARE~\eqref{cdare} can be transformed into the transformed-DARE~\eqref{dare2-a} if $R_H$ is nonsingular. An application of this DARE yields that $R_X>0$ or $R_X<0$ for all $X\in\mathbb{R}_=$ under mild assumption, as we will discuss later. With the previous lemma as preliminary, the first result can be
given.

\begin{Lemma}\label{pro1111}
 If $\widehat{\mathbb{R}}_=\cap\widehat{\mathbb{P}}\neq\phi$, then $\widehat{\mathbb{R}}_=\subseteq\widehat{\mathbb{P}}$.
%Furthermore, $\widehat{\mathcal{R}}_=\subseteq{\mathbb{P}}$ if $\widehat{\mathcal{R}}_=\cap{\mathbb{P}}\neq\phi$.
\end{Lemma}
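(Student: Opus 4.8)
The plan is to first reduce the positive definiteness of the $2m\times 2m$ matrix $\widehat{R}_X=\widehat{R}+\widehat{B}^H X\widehat{B}$ to conditions on the original CDARE data, and only afterwards to transfer that information across all solutions. I would start from the explicit coefficients \eqref{hatA}--\eqref{hatG}: since $\widehat{R}=\overline{R}\oplus R_H$ and $\widehat{B}=\bb\overline{B}&\overline{A}B\eb$, the leading $m\times m$ block of $\widehat{R}_X$ is exactly $\overline{R}+\overline{B}^H X\overline{B}=\overline{R_X}$. A short computation with the Sherman--Morrison--Woodbury formula of Lemma~\ref{Schur} (the same identity that passes between \eqref{cdare-a} and \eqref{cdare-b}) shows that the Schur complement of $\widehat{R}_X$ with respect to this block equals $R_{\mathcal{R}(X)}$. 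Hence, writing $Z:=\mathcal{R}(X)$, whenever $R_X$ is nonsingular the matrix $\widehat{R}_X$ is congruent to $\overline{R_X}\oplus R_{Z}$, so the inertias add and
\[
\widehat{R}_X>0 \iff R_X>0 \ \text{ and } \ R_{\mathcal{R}(X)}>0 .
\]

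Next I would use that every $X\in\widehat{\mathbb{R}}_=$ is a fixed point of $\widehat{\mathcal{R}}=\mathcal{R}\circ\mathcal{R}$, so $(X,Z)$ with $Z=\mathcal{R}(X)$ is a two-cycle of $\mathcal{R}$ whose members both lie in $\mathrm{dom}(\mathcal{R})$ (so $R_X$ and $R_Z$ are nonsingular) and both solve the transformed-DARE. By the reduction above, the hypothesis $\widehat{\mathbb{R}}_=\cap\widehat{\mathbb{P}}\neq\emptyset$ furnishes one solution $X_0$ with $R_{X_0}>0$ and $R_{\mathcal R(X_0)}>0$, and the claim becomes: every solution $X$ satisfies $R_X>0$ and $R_{\mathcal R(X)}>0$, i.e. no solution lies in $\mathbb{E}$. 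Because $X\mapsto\widehat{R}_X$ is monotone, it would suffice to prove that the distinguished $\widehat{\mathbb{P}}$-solution $X_0$ is the \emph{minimal} element of $\widehat{\mathbb{R}}_=$: indeed $X\geq X_0$ would then give $\widehat{R}_X\geq\widehat{R}_{X_0}>0$ at once.

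To compare an arbitrary solution $X$ with $X_0$ I would invoke the DARE difference identity of Lemma~\ref{lemma23}. Evaluating \eqref{Req-c} at the two solutions (once with $Y=X_0$ and once with $Y=X$, using $X=\widehat{\mathcal{R}}(X)$, $X_0=\widehat{\mathcal{R}}(X_0)$ and $\widehat{\mathcal{K}}(Y,Y)=0$) yields the paired Stein equations
\[
\mathcal{S}_{\widehat{T}^{(D)}_{X_0}}(X-X_0)=-\widehat{\mathcal{K}}(X_0,X),\qquad
\mathcal{S}_{\widehat{T}^{(D)}_{X}}(X-X_0)=\widehat{\mathcal{K}}(X,X_0).
\]
Since $\widehat{R}_{X_0}>0$, the quantity $\widehat{\mathcal{K}}(X,X_0)=(\widehat{F}_{X}-\widehat{F}_{X_0})^H\widehat{R}_{X_0}(\widehat{F}_{X}-\widehat{F}_{X_0})$ is positive semidefinite, so $X-X_0$ obeys the one-sided Stein inequality $(X-X_0)\geq(\widehat{T}^{(D)}_{X})^H(X-X_0)\widehat{T}^{(D)}_{X}$; iterating this should yield $X\geq X_0$ once the closed-loop spectral radius is suitably controlled (the solution in $\widehat{\mathbb{P}}$ being almost-stabilizing, in the sense of $\rho\leq1$ as in Theorem~\ref{thm3p1}).

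The hard part will be closing precisely this last comparison — the inertia dichotomy that forbids a solution with indefinite $\widehat{R}_X$ from coexisting with one in $\widehat{\mathbb{P}}$, which is the transformed-DARE incarnation of the $\mathbb{P}/\mathbb{E}$ separation that this section is devoted to. The delicate obstruction is that the one-sided Stein inequality only forces $X\geq X_0$ under control of the relevant spectral radius, and a bare spectral estimate is not enough because the closed loop may carry unit-circle eigenvalues. I would therefore route the argument through the monotone-convergence machinery (Lemma~\ref{11}, Theorem~\ref{thm3p1}) together with the dual correspondence of Proposition~\ref{pro6}, which already encode the spectral information for the extremal solutions, rather than attempting the spectral-radius bound in isolation.
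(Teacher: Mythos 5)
Your opening reduction (the congruence of $\widehat{R}_X$ to $\overline{R_X}\oplus R_{\mathcal{R}(X)}$ via a Schur complement) is correct, but it is tangential to this lemma, which lives entirely at the level of the transformed DARE; that reduction is what the paper uses afterwards, in Theorem~\ref{1111}, to pull the conclusion back to the CDARE. The genuine gap is in your comparison step, and it is not a technicality that the monotone-convergence machinery can patch: it is circular. Writing the two Stein identities for a pair of solutions $X,X_0\in\widehat{\mathbb{R}}_=$ (your own display), the weight that you know to be positive semidefinite, $\widehat{\mathcal{K}}(X,X_0)=(\widehat{F}_X-\widehat{F}_{X_0})^H\widehat{R}_{X_0}(\widehat{F}_X-\widehat{F}_{X_0})\geq 0$, is paired with the Stein operator at the closed loop $\widehat{T}^{(D)}_{X}$ of the \emph{arbitrary} solution $X$, whose spectrum you cannot control (an anti-stabilizing solution has $\mu(\widehat{T}^{(D)}_X)\geq 1$, so no bound $\rho\leq 1$ is available); conversely, the identity whose Stein operator sits at $X_0$ (where spectral information could conceivably be arranged) carries the weight $\widehat{\mathcal{K}}(X_0,X)=(\widehat{F}_{X_0}-\widehat{F}_X)^H\widehat{R}_{X}(\widehat{F}_{X_0}-\widehat{F}_X)$, whose sign is exactly the quantity the lemma is supposed to determine. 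Every variant of your plan hits one of these two walls. Moreover, your sufficient condition --- that $X_0$ be the minimal element of \emph{all} of $\widehat{\mathbb{R}}_=$ --- is strictly stronger than the lemma itself, and the bare hypothesis $\widehat{\mathbb{R}}_=\cap\widehat{\mathbb{P}}\neq\emptyset$ provides none of the ingredients (nonemptiness of $\widehat{\mathbb{T}}$ or $\widehat{\mathbb{O}}$, $\widehat{\mathbb{R}}_\leq\cap\widehat{\mathbb{P}}\neq\emptyset$, order preservation, $\mathcal{R}$-invariance) that Theorem~\ref{thm3p1} or Lemma~\ref{11} require; even when those results apply, they only produce extremal elements of sets that already carry the $\widehat{\mathbb{P}}$-constraint, so they never compare a $\widehat{\mathbb{P}}$-solution against a hypothetical solution outside $\widehat{\mathbb{P}}$, which is the whole point here.

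The missing idea is that no order comparison is needed at all: the paper proves the lemma by an inertia-invariance argument. For each $X\in\widehat{\mathbb{R}}_=$ one factors the Popov function, which depends only on the coefficient matrices and not on $X$,
\begin{align*}
\Psi(\lambda)=\bb \widehat{B}^H\varphi_{\widehat{A}}(\lambda)^H & I\eb
\bb \widehat{H} & 0\\ 0 & \widehat{R}\eb
\bb \varphi_{\widehat{A}}(\lambda)\widehat{B}\\ I\eb
=\phi_X(\lambda)^H\,\widehat{R}_X\,\phi_X(\lambda),
\qquad \phi_X(\lambda)=I+\widehat{F}_X\varphi_{\widehat{A}}(\lambda)\widehat{B},
\end{align*}
valid for $\lambda$ on the unit circle outside the finite set $\sigma(\widehat{A})\cup\sigma(\widehat{A}-\widehat{B}\widehat{F}_X)$, where $\phi_X(\lambda)$ is invertible (SMWF). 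Choosing one $\lambda_0$ on the unit circle avoiding the exceptional sets of two solutions $W_1,W_2$ gives $\phi_{W_1}(\lambda_0)^H\widehat{R}_{W_1}\phi_{W_1}(\lambda_0)=\Psi(\lambda_0)=\phi_{W_2}(\lambda_0)^H\widehat{R}_{W_2}\phi_{W_2}(\lambda_0)$, so $\widehat{R}_{W_1}$ and $\widehat{R}_{W_2}$ are congruent and, by Sylvester's law of inertia, have the same inertia; hence one of them being positive definite forces all of them to be. This works under the lemma's bare hypothesis precisely because it bypasses ordering, extremal solutions, and spectral-radius bounds --- the three things your proposal cannot supply.
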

\begin{proof}
 Let the resolvent of $\widehat{A}$, the matrix function $\varphi_{\widehat{A}}: \mathbb{D}_=\backslash\sigma(\widehat{A})\rightarrow\mathbb{C}^n$ be defined by $\varphi_{\widehat{A}}(\lambda)=(\lambda I-\widehat{A})^{-1}$ for all $\lambda\in \mathbb{D}_=\backslash\sigma(\widehat{A})$, where $\mathbb{D}_{=}$ denotes the unit disk. Let $\lambda\in \mathbb{D}_=\backslash\sigma(\widehat{A})$. Note that $I+\widehat{A}\varphi_{\widehat{A}}(\lambda)=\lambda\varphi_{\widehat{A}}(\lambda)$. Thus,
$(I+\varphi_{\widehat{A}}(\lambda)^H\widehat{A}^H)X(I+\widehat{A}\varphi_{\widehat{A}}(\lambda))=\bar{\lambda}\varphi_{\widehat{A}}(\lambda)^H X \lambda\varphi_{\widehat{A}}(\lambda)=\varphi_{\widehat{A}}(\lambda)^H X \varphi_{\widehat{A}}(\lambda)$ and
\begin{align}\label{1}
&\varphi_{\widehat{A}}(\lambda)^H \mathcal{S}_{\widehat{A}}(X) \varphi_{\widehat{A}}(\lambda)=X+\varphi_{\widehat{A}}(\lambda)^H\widehat{A}^H X+X\widehat{A}\varphi_{\widehat{A}}(\lambda).%+(A\varphi_{\widehat{A}}(\lambda))^H(X-\overline{X})(A\varphi_{\widehat{A}}(\lambda)).
\end{align}
 The matrix $\widehat{R}_X$ is nonsingular if $X\in\widehat{\mathcal{R}}_=$. For the sake of simplicity, the matrix operator $\phi_X:\mathbb{D}_=\backslash\sigma(\widehat{A})\rightarrow\mathbb{C}^n$ be defined by $\phi_X(\lambda):=I+\widehat{F}_X\varphi_{\widehat{A}}(\lambda)\widehat{B}=I+\widehat{R}_X^{-1}\widehat{B}^HX\widehat{A}\varphi_{\widehat{A}}(\lambda)\widehat{B}=\widehat{R}_X^{-1}\widehat{R}_{\lambda X\varphi_{\widehat{A}}(\lambda)}$.  Note that
\[
\widehat{H}=X-\widehat{A}^HX\widehat{A}+\widehat{F}_X^H \widehat{R}_X \widehat{F}_X
\]
if $X\in\widehat{\mathcal{R}}_=$. Observe that
\begin{align}%\label{2}
\varphi_{\widehat{A}}(\lambda)^H \widehat{H} \varphi_{\widehat{A}}(\lambda)&=\varphi_{\widehat{A}}(\lambda)^H (X-\widehat{A}^HX\widehat{A}+\widehat{F}_X^H \widehat{R}_X \widehat{F}_X) \varphi_{\widehat{A}}(\lambda)\nonumber\\
&=X+\varphi_{\widehat{A}}(\lambda)^H\widehat{A}^H X+X\widehat{A}\varphi_{\widehat{A}}(\lambda)+\varphi_{\widehat{A}}(\lambda)^H \widehat{F}_X^H \widehat{R}_X \widehat{F}_X\varphi_{\widehat{A}}(\lambda).\label{222}
\end{align}
A direct calculation together with~\eqref{222} show that the Popov function~\cite[Theorem 2.5.10.]{MR1997753} associated with~\eqref{cdare-a}
\[
\Psi(\lambda):=\begin{pmatrix}
                 \widehat{B}^H\varphi_{\widehat{A}}(\lambda)^H & I
               \end{pmatrix}
               \begin{pmatrix}
                 \widehat{H} & 0 \\
                 0 & \widehat{R}
               \end{pmatrix}
               \begin{pmatrix}
                 \varphi_{\widehat{A}}(\lambda)\widehat{B} \\ I
               \end{pmatrix}\equiv \widehat{R}_{\varphi_{\widehat{A}}(\lambda)^H \widehat{H}\varphi_{\widehat{A}}(\lambda)}
\]
can be rewritten as
\begin{align}\label{2}
 &\Psi(\lambda)=\widehat{R}_X+\widehat{B}^H\left(\varphi_{\widehat{A}}(\lambda)^H\widehat{A}^H X+X\widehat{A}\varphi_{\widehat{A}}(\lambda)+\varphi_{\widehat{A}}(\lambda)^H \widehat{F}_X^H \widehat{R}_X \widehat{F}_X\varphi_{\widehat{A}}(\lambda)\right)\widehat{B}\nonumber\\
 &=\widehat{R}_X+\widehat{B}^H\varphi_{\widehat{A}}(\lambda)^H\widehat{F}_X^H\widehat{R}_X+\widehat{R}_X\widehat{F}_X\varphi_{\widehat{A}}(\lambda)\widehat{B}
 +\widehat{B}^H\varphi_{\widehat{A}}(\lambda)^H \widehat{F}_X^H \widehat{R}_X \widehat{F}_X\varphi_{\widehat{A}}(\lambda)\widehat{B}\nonumber\\
  &=(I+\widehat{B}^H\varphi_{\widehat{A}}(\lambda)^HF_X^H)\widehat{R}_X(I+F_X\varphi_{\widehat{A}}(\lambda)\widehat{B})=\phi_X(\lambda)^H \widehat{R}_X \phi_X(\lambda),%=\phi_X(\lambda)^H R_X^{-1} \phi_X(\lambda).
  \end{align}
 here we using $\widehat{R}_X\widehat{F}_X=\widehat{B}^H X \widehat{A}$.
With the help of SMWF, we conclude that $\phi_X(\lambda)=I+F_X\varphi_{\widehat{A}}(\lambda)\widehat{B}$ is nonsingular if and only if $\varphi_{\widehat{A}}(\lambda)^{-1}+\widehat{B}\widehat{R}_X^{-1}\widehat{B}^H X\widehat{A}=\lambda I-\widehat{A}+\widehat{B}\widehat{F}_X=\lambda I-D_X$ is nonsingular. Therefore, $\mathcal{Z}_X(\phi):=\{\lambda\in \mathbb{C}\, |\, \det(\phi_X(\lambda))=0\}\equiv\sigma(D_X)$ is a finite set of $\mathbb{C}$.

For any $W_1,W_2\in\widehat{\mathbb{R}}_=$, we choose $\lambda_0\in\mathbb{D}_=\backslash(\sigma(\widehat{A})\cup \sigma(D_{W_1})\cup \sigma(D_{W_2}))$ and we see that
$ \widehat{R}_{W_i} \sim \widehat{R}_{\varphi_{\widehat{A}}(\lambda_0)^H H\varphi_{\widehat{A}}(\lambda_0)}$ for any $W_i\in\widehat{\mathbb{R}}_=$, $i=1,2$, where the notion ``$\sim$'' to denote the congruence equivalent relation. The proof is complete.

%Concerning the remaining part of the proposition, we have $\phi\neq\mathcal{R}_=\cap \mathbb{P}\subseteq \widehat{\mathcal{R}}_=\cap \widehat{\mathbb{P}}$ and thus
%\[
%{\mathcal{R}}_=\subseteq\widehat{\mathcal{R}}_=\subseteq\widehat{\mathbb{P}}\subseteq{\mathbb{P}}.
%\]
\end{proof}
%\begin{Remark}
%The formula \eqref{2} can be
%rewritten in the equivalent form
%\[
%R_{\varphi_{\widetilde{A}}(\lambda)^H H\varphi_{\widetilde{A}}(\lambda)}=(R_X^{-1}R_{\lambda X\varphi_{\widetilde{A}}(\lambda)})^H R_X(R_X^{-1}R_{\lambda X\varphi_{\widetilde{A}}(\lambda)})=R_{\bar{\lambda}\varphi_{\widetilde{A}}(\lambda)^H X}R_X^{-1} R_{{\lambda}X\varphi_{\widetilde{A}}(\lambda)}.
%\]
%Note that the inverse of matrix operator $\phi_X(\lambda)$ is
%$\phi_X^{-1}(\lambda):=I-F_X\psi_X(\lambda)B$ with $\psi_X(\lambda)=(\lambda I-T_X)^{-1}$. Alternatively, we have
%\[
%R_X=\phi_X^{-H}(\lambda) R_{\varphi_{\widetilde{A}}(\lambda)^H H\varphi_{\widetilde{A}}(\lambda)}   \phi_X^{-1}(\lambda).
%\]
%\end{Remark}
We are now in a position to show that
${\mathbb{R}}_=\cap \mathbb{P}\neq\emptyset$ and ${\mathbb{R}}_=\cap \mathbb{E}\neq\emptyset$ are mutually exclusive, which are the third main results.
\begin{Theorem}\label{1111}
% If $\widehat{\mathcal{R}}_=\cap\widehat{\mathbb{P}}\neq\emptyset$, then $\widehat{\mathcal{R}}_=\subseteq\widehat{\mathbb{P}}$.
%Furthermore,
Assume that $H\in\mathrm{dom}(\mathcal{R})$.
 We have ${\mathbb{R}}_=\subseteq{\mathbb{P}}$ if ${\mathbb{R}}_=\cap{\mathbb{P}}\neq\emptyset$ and ${\mathbb{R}}_=\subseteq{\mathbb{E}}$ if ${\mathbb{R}}_=\cap{\mathbb{E}}\neq\emptyset$.
In other words, $\mathbf{X_M}$ and $\mathbf{X_m}$ both doesn't exist if either $\mathbf{Y_M}$ or $\mathbf{Y_m}$ exist and vice versa.
%${\mathbb{R}}_=\cap \mathbb{P}\cap \mathbb{E}=\emptyset$.%exclusive
\end{Theorem}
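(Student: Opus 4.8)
The plan is to transfer the definiteness question from the CDARE to the transformed-DARE~\eqref{dare2-a}, where Lemma~\ref{pro1111} already governs the inertia of the control-weighting matrix of every solution, and then to pull the conclusion back. Since $H\in\mathrm{dom}(\mathcal{R})$ guarantees $\det(R_H)\neq 0$, the operator $\widehat{\mathcal{R}}=\mathcal{R}\circ\mathcal{R}$ and its coefficients \eqref{hatA}--\eqref{hatG} are well defined, and every Hermitian solution of the CDARE solves the transformed-DARE: if $X=\mathcal{R}(X)$ then $\widehat{\mathcal{R}}(X)=\mathcal{R}(\mathcal{R}(X))=X$, so $\mathbb{R}_=\subseteq\widehat{\mathbb{R}}_=$. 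I would therefore reduce the whole statement to the single claim that, for every $X\in\mathbb{R}_=$, the two weighting matrices carry the same definiteness type, that is $R_X>0\Leftrightarrow\widehat{R}_X>0$ and $R_X<0\Leftrightarrow\widehat{R}_X<0$.

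Establishing this equivalence is the heart of the argument and the step I expect to be the main obstacle. Writing $\widehat{R}_X=\widehat{R}+\widehat{B}^H X\widehat{B}$ with $\widehat{B}=\bb\overline{B} & \overline{A}B\eb$ and $\widehat{R}=\overline{R}\oplus R_H$, a block computation shows that the leading block of $\widehat{R}_X$ is exactly $\overline{R}+\overline{B}^H X\overline{B}=\overline{R_X}$. I would then evaluate its Schur complement with respect to this block, which reduces to $R_H+B^H A^T\bigl[X-X\overline{B}\,\overline{R_X}^{-1}B^T X\bigr]\overline{A}B$. Using the SMWF (Lemma~\ref{Schur}, part~1) to collapse $X-X\overline{B}\,\overline{R_X}^{-1}B^T X=X(I+\overline{G}X)^{-1}$, together with the conjugate of the CDARE in form~\eqref{cdare-b}, namely $A^T X(I+\overline{G}X)^{-1}\overline{A}=\overline{X}-\overline{H}$ valid for $X\in\mathbb{R}_=$, the Schur complement simplifies to $R_H+B^H(\overline{X}-\overline{H})B=R_X$. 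Hence $\widehat{R}_X$ is congruent to $\overline{R_X}\oplus R_X$, and since $\overline{R_X}$ and $R_X$ share the same inertia, the Schur complement criterion (Lemma~\ref{Schur}, part~2) gives that $\widehat{R}_X$ is positive (resp.\ negative) definite precisely when $R_X$ is. Only the nonsingularity of $R$ and of $R_X$ enters here, not the invertibility of $A$, which matches the weak hypothesis $H\in\mathrm{dom}(\mathcal{R})$.

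With this equivalence in hand the theorem follows quickly. If $\mathbb{R}_=\cap\mathbb{P}\neq\emptyset$, I pick $X_0$ in it; then $\widehat{R}_{X_0}>0$, so $X_0\in\widehat{\mathbb{R}}_=\cap\widehat{\mathbb{P}}$ and Lemma~\ref{pro1111} yields $\widehat{\mathbb{R}}_=\subseteq\widehat{\mathbb{P}}$. For an arbitrary $X\in\mathbb{R}_=\subseteq\widehat{\mathbb{R}}_=$ this gives $\widehat{R}_X>0$, whence $R_X>0$ and $X\in\mathbb{P}$; thus $\mathbb{R}_=\subseteq\mathbb{P}$. For the case $\mathbb{R}_=\cap\mathbb{E}\neq\emptyset$ I would reuse the proof of Lemma~\ref{pro1111} rather than its statement: there it is shown that each $\widehat{R}_W$ with $W\in\widehat{\mathbb{R}}_=$ is congruent to the fixed Popov value $\Psi(\lambda_0)$ via $\phi_W(\lambda_0)$, so all such $\widehat{R}_W$ are mutually congruent and share a common inertia. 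Consequently, if one of them is negative definite then all are, giving $\widehat{\mathbb{R}}_=\subseteq\widehat{\mathbb{E}}$, and the same pull-back through the equivalence yields $\mathbb{R}_=\subseteq\mathbb{E}$.

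Finally, the reformulation is immediate. By definition $\mathbf{X_M},\mathbf{X_m}\in\mathbb{R}_=\cap\mathbb{P}$ while $\mathbf{Y_M},\mathbf{Y_m}\in\mathbb{R}_=\cap\mathbb{E}$, and $\mathbb{P}\cap\mathbb{E}=\emptyset$ since no Hermitian $R_X$ can be both positive and negative definite. Hence if either $\mathbf{Y_M}$ or $\mathbf{Y_m}$ exists then $\mathbb{R}_=\cap\mathbb{E}\neq\emptyset$, which forces $\mathbb{R}_=\subseteq\mathbb{E}$ and therefore $\mathbb{R}_=\cap\mathbb{P}=\emptyset$, so neither $\mathbf{X_M}$ nor $\mathbf{X_m}$ can exist; the converse direction is entirely symmetric.
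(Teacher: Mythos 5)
Your proposal is correct and follows essentially the same route as the paper: both pass to the transformed-DARE, establish that for $X\in\mathbb{R}_=$ the matrices $R_X$ and $\widehat{R}_X$ share the same definiteness type via the same block-matrix algebra (your Schur-complement congruence $\widehat{R}_X\sim\overline{R_X}\oplus R_X$ is exactly the paper's additive decomposition $\widehat{R}_X=(0\oplus R_X)+C\,(\overline{R_X})^{-1}C^H$ written multiplicatively), and then conclude with Lemma~\ref{pro1111}. Your explicit observation that the negative-definite case follows from the congruence argument inside the proof of Lemma~\ref{pro1111} (all $\widehat{R}_W$, $W\in\widehat{\mathbb{R}}_=$, being congruent to a common Popov value) rather than from its statement merely fills in a detail the paper compresses into ``the same proof remains valid.''
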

\begin{proof}
%Let $R_H := R + B^H \overline{H} B$.

% Observed that
%\begin{align*}
% \widetilde{R}_X&=\bb  R_X &\overline{B}^H X\overline{A}B\\ B^H \overline{A}^H X\overline{B} & R+B^H(\overline{H}+\overline{A}^H X\overline{A}) B\eb.
% \end{align*}when $X\in\mathbb{R}_{=}\cap\mathbb{P}$,
If $X\in\mathbb{R}_=$ with $R_X>0$, the positiveness of $\widehat{R}_X$ can be verified directly by going through the
 following computation
 \begin{align*}
 &\widehat{R}_X%=\bb  R_X &\overline{B}^H X\overline{A}B\\ B^H \overline{A}^H X\overline{B} & R+B^H(\overline{H}+\overline{A}^H X\overline{A}) B\eb\\
 =\bb  \overline{R_X} &\overline{B}^H X\overline{A}B\\ B^H \overline{A}^H X\overline{B} & R+B^H(\overline{X}+\overline{A}^H X\overline{B}(\overline{R_X})^{-1}\overline{B}^H
 X\overline{A}) B\eb\\
 &=0\oplus \overline{R_X}+\bb \overline{R_X} \\ B^H\overline{A}^HX\overline{B}  \eb
 (\overline{R_X})^{-1}\bb  \overline{R_X} &
 \overline{B}^H X \overline{A} B
  \eb>0.
 \end{align*}
Conversely, it is clear that $R_X>0$ if  $\widehat{R}_X>0$.
Namely, $\widehat{\mathbb{P}}\subseteq\mathbb{P}$ and $\mathbb{R}_{=}\cap\mathbb{P}\subseteq\widehat{\mathbb{P}}$.
It is obtained from~Lemma~\ref{pro1111} that
$\mathbb{D}_=\subseteq\widehat{\mathbb{P}}$ if $\mathbb{D}_=\cap\widehat{\mathbb{P}}\neq\emptyset$. Concerning the first part of the theorem, we have $\mathbb{R}_=\cap \mathbb{P}\subseteq \mathbb{D}_=\cap \widehat{\mathbb{P}}$, thus $\mathbb{D}_=\cap \widehat{\mathbb{P}}\neq\emptyset$ and
\[
{\mathbb{R}}_=\subseteq\mathbb{D}_=\subseteq\widehat{\mathbb{P}}\subseteq{\mathbb{P}}.
\]
The same proof remains valid for $\mathbb{R}_=\subseteq\mathbb{E}$ if $\mathbb{R}_=\cap\mathbb{E}\neq\emptyset$ and thus the detail is omitted.
\end{proof}
\section{(Almost) stabilizing and anti-stabilizing  solutions}\label{sec4.5}
As we have seen in Theorem~\ref{thm3p1} previously, the spectral radius of $\widehat{T}_{\mathbf{X_M}}$ is always less than or equal to 1 if it exist. It is interesting to ask whether the converse is also true? Namely, the Hermitian solution $X$ of CDARE~\eqref{cdare} with $\rho(\widehat{T}_{X})\leq 1$ whether coincide with $\mathbf{X_M}$?
For this reason, the section provides a detailed exposition of the relationship between extremal solutions and a specific solutions $X$ with $\rho(\widehat{T}_X)\leq 1$ or $\mu(\widehat{T}_X)\geq 1$. Some alternating sufficient conditions for the existence of extremal solutions are first established.
Moreover, we also provide some equivalent conditions for the assumption of the existence from a different point of view.
\begin{Definition}
\par\noindent
\begin{enumerate}
  \item $\mathbf{X_s}$ is called the (almost) stabilizing solution of CDARE~\eqref{cdare} if $\mathbf{X_s}\in\mathbb{R}_=$ and $\rho(\widehat{T}_{\mathbf{X_s}})< 1$. $(\rho(\widehat{T}_{\mathbf{X_s}})\leq 1)$
  \item $\mathbf{X_{a.s}}$ is called the (almost) anti-stabilizing solution of CDARE~\eqref{cdare} if $\mathbf{X_{a.s}}\in\mathbb{R}_=$ and $\mu(\widehat{T}_{\mathbf{X_{a.s}}})> 1$. $(\mu(\widehat{T}_{\mathbf{X_{a.s}}})\geq 1)$
\end{enumerate}
\end{Definition}
\begin{Definition}~\label{defsta}
\par\noindent
We call that the matrix pair $A-\lambda B\in\mathbb{C}^{n\times n}\times\mathbb{C}^{n\times m}$ is stabilizable (anti-stabilizable) if $\rho(A-BF)<1$ ($\mu(A-BF)>1$)
for some $F\in\mathbb{F}^{m\times m}$.
\end{Definition}
\begin{Remark}
A well-known equivalent definition on the stabilizability (resp. anti-stabilizability) of matrix pair $(A,B)$ is $\mathrm{rank} [A - \lambda I\ B] = n$ for all $\lambda \in \widehat{\mathbb{R}}_\geq$(resp. $\mathbb{D}_\leq$).
\end{Remark}
%The next lemma gives some equivalent
The relationship between almost (anti-)stabilizable pairs and some subsets of $\mathrm{dom}({\mathcal{R}})$  is characterized in the following lemma, which will be used in the proof of main result later on.
\begin{Lemma}\label{thm3p34}
Assume that $H\in\mathrm{dom}({\mathcal{R}})$. Then,
\begin{enumerate}
  %\item $\widehat{F}_X\equiv\bb \overline{F_X} T_{\mathcal{R}(X)}\\ F_{\mathcal{R}(X)}-F_H\eb$ and
%      $\widehat{T}_X^{(D)}\equiv\widehat{A}-\widehat{B}\widehat{F}_X=\overline{T_X}T_{\mathcal{R}(X)}$ if $X$,$\mathcal{R}(X)\in\mathrm{dom}(\mathcal{R})$.
%    Moreover, $\widehat{T}_X^{(D)}=\widehat{T}_X$  for any $X\in\mathbb{R}_=$.
  \item
  \begin{enumerate}
    \item
    When $\widehat{\mathbb{R}}_{\leq}\cap\widehat{\mathbb{P}}\neq\emptyset$  , the pair $(\widehat{A},\widehat{B})$ is stabilizable if and only if $\widehat{\mathbb{R}}_\geq\cap\widehat{\mathbb{T}}\cap\widehat{\mathbb{P}}\neq\color{black}\emptyset$
    if and only if $\widehat{\mathbb{T}}\neq\color{black}\emptyset$.
    \item
     When $\widehat{\mathbb{R}}_{\geq}\cap\widehat{\mathbb{E}}\neq\emptyset$,
     the pair $(\widehat{A},\widehat{B})$ is stabilizable if and only if  $\widehat{\mathbb{R}}_\leq\cap\widehat{\mathbb{T}}\cap\widehat{\mathbb{E}}\neq\color{black}\emptyset$
     if and only if $\widehat{\mathbb{T}}\neq\color{black}\emptyset$.
  \end{enumerate}

     %  which is equivalent to the either case
%      \begin{enumerate}
%        \item
%        $\widehat{\mathbb{T}}\cap\widehat{\mathbb{P}}\neq\color{black}\emptyset$
%      if $\mathbb{D}_{\leq}\cap\widetilde{\mathbb{P}}\neq\emptyset$.
%        \item $\widetilde{\mathbb{P}}\cap\widetilde{\mathbb{E}}\neq\color{black}\emptyset$
%      if $\mathbb{D}_{\geq}\cap\widetilde{\mathbb{E}}\neq\emptyset$.
%
%      \end{enumerate}
   \item
    \begin{enumerate}
      \item  When $\widehat{\mathbb{R}}_{\leq}\cap\widehat{\mathbb{P}}\neq\emptyset$,
            the pair $(\widehat{A},\widehat{B})$ is anti-stabilizable
            if and only if  $\widehat{\mathbb{O}}\neq\color{black}\emptyset$.
      \item  When $\widehat{\mathbb{R}}_{\geq}\cap\widehat{\mathbb{E}}\neq\emptyset$,
           the pair $(\widehat{A},\widehat{B})$ is anti-stabilizable if and only if  $\widehat{\mathbb{O}}\neq\color{black}\emptyset$.
    \end{enumerate}

     %  , which is equivalent to the either case
%      \begin{enumerate}
%        \item
%        $\widetilde{\mathbb{T}}\cap\widetilde{\mathbb{P}}\neq\color{black}\emptyset$
%      if $\mathbb{D}_{\leq}\cap\widetilde{\mathbb{P}}\neq\emptyset$.
%        \item $\widetilde{\mathbb{P}}\cap\widetilde{\mathbb{E}}\neq\color{black}\emptyset$
%      if $\mathbb{D}_{\geq}\cap\widetilde{\mathbb{E}}\neq\emptyset$.

% \end{enumerate}

\end{enumerate}
\end{Lemma}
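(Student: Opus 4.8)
The plan is to regard the transformed-DARE~\eqref{dare2-a} as an ordinary DARE and to transcribe the machinery of Sections~\ref{sec2}--\ref{sec3} to it, replacing the conjugate Stein operator $\mathcal{C}_A$ by $\mathcal{S}_{\widehat A}$ and deleting all conjugations; the ordinary-DARE analogues of Lemma~\ref{lem2p}, Proposition~\ref{pro4}, Theorem~\ref{thm3p1} and Proposition~\ref{P1} are then available (cf.~\cite{MR4794239}). The two identities I rely on are Lemma~\ref{lemma23} and the ordinary-DARE analogue of \eqref{Req-a}, namely $X-\widehat{\mathcal R}(X)=\mathcal{S}_{\widehat A-\widehat BF}(X)-\widehat H_F+\widehat{\mathcal K}_F(X)$ for every $F$, together with $\widehat T^{(D)}_X=\widehat A-\widehat B\widehat F_X$. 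For part~1(a) I would close the cycle (ii)$\Rightarrow$(iii)$\Rightarrow$(i)$\Rightarrow$(ii). The implications (ii)$\Rightarrow$(iii) and (iii)$\Rightarrow$(i) are immediate: $\widehat{\mathbb R}_\geq\cap\widehat{\mathbb T}\cap\widehat{\mathbb P}\subseteq\widehat{\mathbb T}$, and any $X\in\widehat{\mathbb T}$ exhibits the stabilizing feedback $\widehat F_X$ because $\rho(\widehat A-\widehat B\widehat F_X)=\rho(\widehat T^{(D)}_X)<1$.

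For the one substantive implication (i)$\Rightarrow$(ii), fix $X_-\in\widehat{\mathbb R}_\leq\cap\widehat{\mathbb P}$ and a stabilizing $F$ with $\rho(\widehat A-\widehat BF)<1$, and let $X_\ast$ be the unique solution of the Stein equation $\mathcal{S}_{\widehat A-\widehat BF}(X_\ast)=\widehat H_F$. Since $\widehat R_{X_-}>0$ gives $\widehat{\mathcal K}_F(X_-)\geq0$, the identity above yields $\mathcal{S}_{\widehat A-\widehat BF}(X_-)\leq\widehat H_F=\mathcal{S}_{\widehat A-\widehat BF}(X_\ast)$; as $\rho(\widehat A-\widehat BF)<1$ makes $\mathcal{S}_{\widehat A-\widehat BF}^{-1}$ positivity preserving, this forces $X_\ast\geq X_-$, hence $\widehat R_{X_\ast}\geq\widehat R_{X_-}>0$, i.e.\ $X_\ast\in\widehat{\mathbb P}$, and then $X_\ast-\widehat{\mathcal R}(X_\ast)=\widehat{\mathcal K}_F(X_\ast)\geq0$ places $X_\ast\in\widehat{\mathbb R}_\geq$. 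The hard part is to show $X_\ast\in\widehat{\mathbb T}$: writing Lemma~\ref{lemma23} at $Y=X_\ast$ gives $\mathcal{S}_{\widehat T^{(D)}_{X_\ast}}(X_\ast)=(X_\ast-\widehat{\mathcal R}(X_\ast))+\widehat H_{\widehat F_{X_\ast}}$, whose right-hand side need not be positive because, under the sole hypothesis $H\in\mathrm{dom}(\mathcal R)$, $\widehat H$ may be indefinite. This is the main obstacle, and it is exactly where stabilizability must be spent: I would invoke the strict branch of the ordinary-DARE analogue of Lemma~\ref{lem2p}/Proposition~\ref{pro4}, using the Hautus form of stabilizability recalled after Definition~\ref{defsta} ($\mathrm{rank}[\widehat A-\lambda I\ \widehat B]=n$ on $\mathbb D_\geq$) to exclude eigenvectors of $\widehat T^{(D)}_{X_\ast}$ associated with $\lambda\in\mathbb D_\geq$ from the relevant kernel, thereby upgrading $\rho(\widehat T^{(D)}_{X_\ast})\leq1$ to $\rho(\widehat T^{(D)}_{X_\ast})<1$; a perturbation of the Stein right-hand side to $\widehat H_F+\varepsilon I$, producing a strict supersolution, can be used to make this kernel argument clean.

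The remaining three cases are variations on the same construction. Part~1(b) is the sub/supersolution mirror image: from $X_+\in\widehat{\mathbb R}_\geq\cap\widehat{\mathbb E}$ and a stabilizing $F$, the same Stein solve with all inequalities reversed and $\widehat R_{X_\ast}<0$ produces $X_\ast\in\widehat{\mathbb R}_\leq\cap\widehat{\mathbb T}\cap\widehat{\mathbb E}$, again with the strict spectral bound as the only delicate point. Part~2 treats anti-stabilizability: an anti-stabilizing $F$ satisfies $\mu(\widehat A-\widehat BF)>1$, so $\widehat A-\widehat BF$ is invertible and the associated Stein operator is again boundedly invertible but now sign-reversing on the unstable part; solving the corresponding equation and using the $X\leq0$ branch of Lemma~\ref{lem2p} together with the Hautus condition on $\mathbb D_\leq$ yields $X\in\widehat{\mathbb O}$ with $\mu(\widehat T^{(D)}_X)>1$, while the converse $\widehat{\mathbb O}\neq\emptyset\Rightarrow$ anti-stabilizable is once more immediate via $\widehat F_X$. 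In every case the routine part is the Stein/monotonicity bookkeeping that lands the candidate in the correct definiteness class $\widehat{\mathbb P}$ or $\widehat{\mathbb E}$, and the genuine difficulty is the passage from the Riccati (in)equality to the strict spectral separation $\rho(\widehat T^{(D)}_X)<1$ or $\mu(\widehat T^{(D)}_X)>1$, which is precisely the content that the (anti-)stabilizability hypothesis provides.
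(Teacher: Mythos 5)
Your skeleton for part~1 is the same as the paper's: the implications $\widehat{\mathbb{R}}_\geq\cap\widehat{\mathbb{T}}\cap\widehat{\mathbb{P}}\neq\emptyset\Rightarrow\widehat{\mathbb{T}}\neq\emptyset\Rightarrow\text{stabilizable}$ are immediate, and for the substantive implication the paper also fixes a stabilizing $F$ and solves a Stein equation (it takes $X_\star=\mathcal{S}_{A_F}^{-1}(Z)$ with $Z\leq\widehat H_F$ in its mirror case $\widehat{\mathbb{R}}_\geq\cap\widehat{\mathbb{E}}\neq\emptyset$), then compares the Stein solution with the hypothesized sub/supersolution to place it in the correct definiteness class and in $\widehat{\mathbb{R}}_\geq$ (resp.\ $\widehat{\mathbb{R}}_\leq$). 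Up to that point your bookkeeping is correct and essentially identical to the paper's. (For part~2 you diverge: you invert the Stein operator of an anti-stable closed loop directly, whereas the paper reduces anti-stabilizability to stabilizability of the dual DARE~\eqref{ddare} via $(\widehat A_d,\widehat B_d)$ and the closed-loop relation~\eqref{c2}, then cites part~1; your route is plausible in principle but inherits the problem below.)

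The genuine gap is at precisely the step you call ``the main obstacle,'' and your proposed fix does not close it. No spectral information can be extracted from the identity $\mathcal{S}_{\widehat T^{(D)}_{X_\ast}}(X_\ast)=(X_\ast-\widehat{\mathcal R}(X_\ast))+\widehat H_{\widehat F_{X_\ast}}$, with or without the Hautus condition, because both $X_\ast$ and the right-hand side have unknown sign; moreover your phrase ``upgrading $\rho(\widehat T^{(D)}_{X_\ast})\leq 1$ to $\rho(\widehat T^{(D)}_{X_\ast})<1$'' presupposes the bound $\rho\leq 1$, which you never establish, and the ``relevant kernel'' you want to purge of eigenvectors is not defined by anything you wrote down. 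The missing idea — which is the paper's Proposition~\ref{2222} — is that the comparison matrix $X_-$ must enter the \emph{spectral} argument, not only the definiteness bookkeeping: subtracting two instances of Lemma~\ref{lemma23} gives, for $X_-\in\widehat{\mathbb{R}}_\leq$ and $X_\ast\in\widehat{\mathbb{R}}_\geq$ with $\Delta:=X_\ast-X_-\geq 0$,
\[
\mathcal{S}_{\widehat T^{(D)}_{X_\ast}}(\Delta)\;\geq\;\widehat{\mathcal K}(X_\ast,X_-)\;=\;\bigl(\Delta\widehat T^{(D)}_{X_\ast}\bigr)^H\,\widehat B\widehat R_{X_-}^{-1}\widehat B^H\,\bigl(\Delta\widehat T^{(D)}_{X_\ast}\bigr)\;\geq\;0,
\]
where positive semidefiniteness of the right-hand side is exactly where $\widehat R_{X_-}>0$ is spent a second time. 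Once this inequality is available, your $\varepsilon$-perturbation actually finishes the proof on its own, more cleanly than any rank argument: solving $\mathcal{S}_{A_F}(X_\varepsilon)=\widehat H_F+\varepsilon I$ gives $X_\varepsilon\geq X_-$ and $X_\varepsilon-\widehat{\mathcal R}(X_\varepsilon)\geq\varepsilon I$, so the displayed inequality becomes $\mathcal{S}_{\widehat T^{(D)}_{X_\varepsilon}}(\Delta_\varepsilon)\geq\varepsilon I$, and any eigenvector $\widehat T^{(D)}_{X_\varepsilon}u=\lambda u$ then yields $(1-|\lambda|^2)\,u^H\Delta_\varepsilon u\geq\varepsilon\|u\|^2>0$ with $u^H\Delta_\varepsilon u\geq 0$, forcing $|\lambda|<1$; no Hautus condition is needed at all. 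As written, however, your proposal ties the perturbation to the wrong identity and to a kernel-exclusion step that has no inequality to act on, so the passage from the Riccati inequalities to $\rho(\widehat T^{(D)}_{X_\ast})<1$ (and likewise to $\mu>1$ in part~2) is a missing step, not a routine detail.
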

\begin{proof}
\begin{enumerate}
  \item
  We only need to show the part~(b) and the proof of the remaining part is similar.
  Assume that $\widehat{\mathbb{R}}_\geq\cap\widehat{\mathbb{E}}\neq\emptyset$.
  {\color{black}Let $F\in \mathbb{C}^{n\times m}$ with $\rho({A_F})<1$ and $X_\star = \mathcal{S}_{A_F}^{-1} (Z)$ by taking an arbitrary matrix $Z$ with $Z\leq H_F$. We shall now prove that $X_\star\in\widehat{\mathbb{R}}_\leq\cap\widehat{\mathbb{E}}\cap\widehat{\mathbb{T}}$.}
First, from \eqref{Req-a} we have
 \begin{align*}
 X_{\mathbb{E}}-\widehat{\mathcal{R}}(X_{\mathbb{E}})&=\mathcal{S}_{A_{F}}(X_{\mathbb{E}})-\widehat{H}_F+\widehat{\mathcal{K}}_F(X_{\mathbb{E}})\geq 0,
 \end{align*}
for any $X_{\mathbb{E}}\in\widehat{\mathbb{R}}_\geq \cap \widehat{\mathbb{E}}$.
Therefore, $\mathcal{S}_{A_{F}}(X_{\mathbb{E}})\geq \widehat{H}_F\geq Z = \mathcal{S}_{A_{F}}(X_{\star})$, and thus $X_{\mathbb{E}}\geq X_{\star}$. That is, $X_{\star}\in\widehat{\mathbb{E}}\subseteq \mathrm{Dom}({\widehat{\mathcal{R}}})$.
On the other hand, we also have
\begin{align*}
X_\star-\mathcal{R}(X_\star)&=\mathcal{S}_{A_{F}}(X_\star)-\widehat{H}_F+\widehat{\mathcal{K}}_F(X_\star)\leq \widehat{\mathcal{K}}_F(X_\star)\leq 0.
 \end{align*}
Namely, $X_\star\in\widehat{\mathbb{R}}_\leq$. Next, applying Lemma~\ref{lemma23} we obtain
\begin{align*}
&X_\star-\widehat{\mathcal{R}}(X_\star) =\mathcal{S}_{T_{X_\star}}(X_\star)-\widehat{H}_{X_\star},\\
X_{\mathbb{E}}-\widehat{\mathcal{R}}(X_{\mathbb{E}}) &=\mathcal{S}_{T_{X_\star}}(X_{\mathbb{E}})-\widehat{H}_{F_{X_\star}} + \color{black}\widehat{\mathcal{K}}_{F_{X_\star}} (X_{\mathbb{E}}).
 \end{align*}
From which we deduce that
\begin{subequations}
\begin{align}
  &\mathcal{C}_{T_{X_\star}}(X_\star-X_{\mathbb{E}})
=X_\star-\mathcal{R}(X_\star)+H_{F_{X_\star}}
  -(X_{\mathbb{E}}-\mathcal{R}(X_{\mathbb{E}}) +H_{F_{X_\star}}\label{333a}\\
  &- {\color{black}\mathcal{K}_{F_{X_\star}} (X_{\mathbb{E}}))} \leq {\color{black}\mathcal{K}_{F_{X_\star}} (X_{\mathbb{E}})} -(X_{\mathbb{E}}-\mathcal{R}(X_{\mathbb{E}}))+\mathcal{K}_F(X_\star) \leq \mathcal{K}_F(X_\star)\leq 0.\label{333b} %\label{3c}
\end{align}
\end{subequations}
%We further have
%\begin{align*} %\label{stein-T0}
%  \mathcal{S}_{T_{X_\star}} (X_\star) &=H_{F_{X_\star}}+ \mathcal{K}_F(X_\star) \geq \mathcal{K}_F(X_\star)\geq 0,
%\end{align*}
It follows from Lemma that $\rho(\widehat{T}_{X_\star})< 1$ and hence $X_\star\in\mathbb{T}\cap\mathbb{E}$.
   {\color{black}If $X_\star \in \mathbb{T} \cap \mathbb{P}$, then $F:=F_{X_\star}\in \mathbb{F}$ because
  $\rho (\widehat{A}_F) = \rho (\widehat{T}_{X_\star}) < 1$.}
  {\color{black}Since $\mathbb{R}_{\leq}\cap\mathbb{P}\neq\color{black}\emptyset$ and the statement (ii) holds, it follows from Theorem that
$\mathbf{X_M} \in \mathbb{R}_= \cap \mathbb{P}$. Moreover, the remaining parts of the proof are listed below.}
  \item By a similar argument of dual-CDARE associated with original CDARE, the dual-DARE associated with DARE is introduced as following:
\begin{equation}\label{ddare}
  {X}= {\widehat{\mathcal{R}}_d}(X)  :=  \widehat{A}_d^H {X} \widehat{A}_d - \widehat{A}_d^H {X}\widehat{B}_d
(\widehat{R}_d + \widehat{B}_d^H {X} \widehat{B}_d)^{-1}\widehat{B}_d^H {X} {A}_d + \widehat{H}_d,
\end{equation}
where the coefficient matrices are {\color{black} given} by
\begin{align}\label{tldA}
\widehat{A}_d &= \widehat{A}^{-1}-\widehat{B}_d\widehat{R}_d^{-1}\widehat{B}^H\widehat{H}^{(A)},\, \widehat{B}_d = \widehat{A}^{-1}\widehat{B},\,\widehat{R}_d = \widehat{R} + \widehat{B}^H\widehat{H}^{(A)} \widehat{B},\\
\widehat{H}_d&=(I+\widehat{H}^{(A)}\widehat{G})^{-1}\widehat{H}^{(A)},\,\widehat{H}^{(A)} = (\widehat{A}^{-1})^H \widehat{H} \widehat{A}^{-1},
\end{align}
respectively. It can be easily verified that
\begin{subequations}
\begin{enumerate}
  \item
  \begin{align}\label{c1}
   \widehat{\mathbb{R}}_\leq\cap\widehat{\mathbb{P}}=-\widehat{\mathbb{R}}_{\mathrm{d},\leq}\cap\widehat{\mathbb{P}}_d,
   \, \widehat{\mathbb{R}}_\geq\cap\widehat{\mathbb{E}}=-\widehat{\mathbb{R}}_{\mathrm{d},\geq}\cap\widehat{\mathbb{E}}_d
  \end{align}
 where $\widehat{\mathbb{R}}_{\mathrm{d},\leq}:=\{X\in \mathrm{dom}({\mathcal{\widehat{R}}_d})\, |\, X  \leq  \widehat{\mathcal{R}}_d(X) \}$,
 \vspace{1mm}
  $\widehat{\mathbb{R}}_{\mathrm{d},\geq}:=\{X\in \mathrm{dom}({\mathcal{\widehat{R}}_d})\, |\, X  \geq  \widehat{\mathcal{R}}_d(X) \}$, $\widehat{\mathbb{P}}_d:=\{X\in \mathrm{dom}(\widehat{\mathcal{R}}_d) \,|\, \widehat{R}_d+\widehat{B}_d^H X\widehat{B}_d>0\}$ and $\widehat{\mathbb{E}}_d:=\{X\in \mathrm{dom}(\widehat{\mathcal{R}}_d) \,|\, \widehat{R}_d+\widehat{B}_d^H X\widehat{B}_d<0\}$.
  \item If $\widehat{A}-\widehat{B}\widehat{F}$ is invertible, then the matrix
  $\widehat{A}_d-\widehat{B}_d\widehat{F}_d$ is invertible with $\widehat{F}_d=-\widehat{R}^{-1}(\widehat{B}^H \widehat{H}+\widehat{R}\widehat{F})(\widehat{A}-\widehat{B}\widehat{F})^{-1}$ and satisfies
  \begin{align}\label{c2}
  ({A}_d-{B}_d{F}_d)(\widehat{A}_d-\widehat{B}_d\widehat{F}_d)=I.
  \end{align}
  That is, the matrix pair $(\widehat{A},\widehat{B})$ is anti-stabilizable if the matrix pair $(\widehat{A}_d,\widehat{B}_d)$ is stabilizable.
\end{enumerate}
\end{subequations}
The matrix pair $(\widehat{A}_d,\widehat{B}_d)$ is stabilizable is equivalent to $\widehat{\mathbb{T}}_d\cap\widehat{\mathbb{P}}_d\neq\emptyset$ when $\widehat{\mathbb{R}}_{\mathrm{d},\leq}\cap\widehat{\mathbb{P}}_d\neq\emptyset$ or $\widehat{\mathbb{R}}_{\mathrm{d},\geq}\cap\widehat{\mathbb{E}}_d\neq\emptyset$. Analogously, the matrix pair $(\widehat{A},\widehat{B})$ is anti-stabilizable is equivalent to $\widehat{\mathbb{O}}\cap\widehat{\mathbb{E}}\neq\emptyset$ when $\widehat{\mathbb{R}}_\leq\cap\widehat{\mathbb{P}}\neq\emptyset$ or $\widehat{\mathbb{R}}_\geq\cap\widehat{\mathbb{E}}\neq\emptyset$.

\end{enumerate}

%Recall that the following DARE of the standard form
%\begin{subequations} \label{sdare2}
%\begin{align}\label{sdare2-a}
%X = {\mathcal{R}}(X)  = A^H {X}A - A^H {X}B(R_d+
%{B_d}^H {X}{B_d})^{-1} {B_d}^H {A_d} + {H_d} %\\
%%&= \widehat{A}^H X (I + \widehat{G} X)^{-1}\widehat{A} + \widehat{H}, \label{sdare2-b}
%\end{align}
%\end{subequations}
\end{proof}
 Let %$E_{{\lambda}}(C):=\textrm{Ker}(C-\lambda I)$ and
$G_{{\lambda}}(C):=\bigcup\limits_{1\leq k \leq n}\textrm{Ker}(C-\lambda I)^k$ be the eigenspace and generalized eigenspace of a $n$-square matrix $C$ corresponding to the eigenvalue ${\lambda}\in\sigma(C)$, respectively.
Given a Hermitian matrix $Q$, the subset $U_Q(C)$ of the spectrum of a matrix $C\in\mathbb{C}^{n\times n}$ is defined by
\begin{subequations}\label{defU}
\begin{align}\label{defU1}
U_Q(C):=\{\lambda\in\sigma({C});\textrm{Ker}(C^H-\bar{\lambda} I)\cap \mbox{Ker}({Q})=\{0\}\}
\end{align}
or equivalently,
\begin{align}\label{defU2}
U_Q(C)\equiv\{\lambda\in\sigma(C);\mbox{rank}(\bb C-\lambda I & {Q}\eb)=n\},%,
\end{align}
where $Q=Q^H\in\mathbb{H}_n$. %$C:= \overline{C}C$,
\end{subequations}
Note that $\mathbb{D}_\geq\cap\sigma(C)\subseteq U_Q(C)$ is equivalent to $(C,{Q})$ is stabilizable~\cite[Theorem~4.5.6]{l.r95}. Since the proof of main result is a little lengthy we shall divide it in several parts presented as a proposition and four auxiliary Lemmas.
Now, the following result can be readily obtained, which will play an important role in the section.
\begin{Lemma}\label{lem1111}

%Consider two discrete-time Lyapunov matrix equations
Let $C\in\mathbb{C}^{n\times n}$ and $Q\geq 0$.
Assume that there exist $X,Y\in\mathbb{H}_n$ such that
\begin{align}\label{13}
 \mathcal{S}_C(X)\geq (Y C)^HQ(Y C).
\end{align}
%for all $\lambda\in U_{Q_1}(C)\cap\mathbb{D}_=$,where $C:=\overline{C}C$.
Suppose that $\sigma(C)\cap \mathbb{D}_=\subseteq U_Q(C)$. Then
   $X\geq 0$ %if $\sigma(C)\cap\mathbb{D}_{\geq}\subseteq U_Q(C)$, or
      if $\rho(C)\leq 1$, and $X\leq 0$ %if $\sigma(C)\cap\mathbb{D}_{\geq}\subseteq U_Q(C)$, or
      if $\mu(C)\geq 1$.
 %The following conclusions hold.
%\begin{enumerate}
%  \item
%   When $Q\geq 0$, then
%   $X\geq 0$ %if $\sigma(C)\cap\mathbb{D}_{\geq}\subseteq U_Q(C)$, or
%      if $\rho(C)\leq 1$, and $X\leq 0$ %if $\sigma(C)\cap\mathbb{D}_{\geq}\subseteq U_Q(C)$, or
%      if $\mu(C)\geq 1$.
%     \item  When $Q\leq 0$, then
%   $X\geq 0$ %if $\sigma(C)\cap\mathbb{D}_{\geq}\subseteq U_Q(C)$, or
%      if $\mu(C)\geq 1$, and $X\leq 0$ %if $\sigma(C)\cap\mathbb{D}_{\geq}\subseteq U_Q(C)$, or
%      if $\rho(C)\leq 1$.
%\end{enumerate}
\end{Lemma}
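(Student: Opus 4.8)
The plan is to reduce the two assertions to one and then attack the reduced statement by iterating the Stein inequality. If $\mu(C)\geq 1$ then $0\notin\sigma(C)$, so $C$ is invertible, and conjugating \eqref{13} by $C^{-1}$ gives $C^{-H}XC^{-1}-X\geq Y^HQY$, that is $\mathcal{S}_{C^{-1}}(-X)\geq ((YC)C^{-1})^HQ((YC)C^{-1})$. Here $\rho(C^{-1})=1/\mu(C)\leq 1$, and since $\mathrm{Ker}(C^{-H}-\lambda I)=\mathrm{Ker}(C^H-\bar\lambda I)$ for $|\lambda|=1$, the hypothesis $\sigma(C)\cap\mathbb{D}_=\subseteq U_Q(C)$ transfers verbatim to $\sigma(C^{-1})\cap\mathbb{D}_=\subseteq U_Q(C^{-1})$. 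Hence the claim $X\leq 0$ under $\mu(C)\geq 1$ follows from the claim $X\geq 0$ under $\rho(C)\leq 1$ applied to the triple $(C^{-1},-X,YC)$, and it suffices to prove the latter.

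For the reduced statement I would rewrite \eqref{13} as the Stein inequality $X\succeq C^H(X+\widehat Q)C$ with $\widehat Q:=Y^HQY\geq 0$, and iterate it to obtain, for every $k\geq 1$,
\[
X\ \succeq\ (C^H)^k X\,C^k+\sum_{j=1}^{k}(C^H)^{j}\widehat Q\,C^{j}.
\]
When $\rho(C)<1$ this already finishes the proof: the leading term tends to $0$ while the (manifestly positive semidefinite) sum converges, so $X\geq 0$ without invoking the spectral condition. The whole difficulty is therefore concentrated on the eigenvalues of $C$ on the unit circle, where $(C^H)^kX\,C^k$ does not decay and the series diverges. I would localise these by splitting $\mathbb{C}^n$ into the $C$-invariant generalised eigenspaces $G_\lambda(C)$, reducing the problem to the restriction of the inequality to $\bigoplus_{|\lambda|=1}G_\lambda(C)$.

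On this boundary subspace the argument must come from the hypothesis $\sigma(C)\cap\mathbb{D}_=\subseteq U_Q(C)$, i.e. from the full-rank (PBH) condition $\mathrm{rank}\,\bb C-\lambda I & Q\eb=n$, equivalently $\mathrm{Ker}(C^H-\bar\lambda I)\cap\mathrm{Ker}(Q)=\{0\}$, at every unit-modulus eigenvalue. This is exactly the detectability data that Lemma~\ref{lem2p} trades against definiteness of a Stein form, so I would run the resolvent computation of Lemma~\ref{pro1111}: for $\lambda\in\mathbb{D}_=\setminus\sigma(C)$ and $\varphi_C(\lambda):=(\lambda I-C)^{-1}$ one has $\varphi_C(\lambda)^H\mathcal{S}_C(X)\varphi_C(\lambda)=X+\varphi_C(\lambda)^HC^HX+XC\varphi_C(\lambda)$, and letting $\lambda$ tend to a unit-circle eigenvalue the bound $\mathcal{S}_C(X)\geq (YC)^HQ(YC)\geq 0$ combined with $U_Q(C)$ should force the boundary block of $X$ to be positive semidefinite and simultaneously exclude nontrivial Jordan structure at such eigenvalues.

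The main obstacle is precisely this unit-circle analysis: the iteration contracts only for $|\lambda|<1$, so positivity on the boundary cannot be read off from a convergent series and must instead be extracted from the observability condition. I expect the technical heart to be proving that a unit-modulus eigenvalue can carry neither a negative direction of $X$ nor a nontrivial Jordan block without contradicting $\mathrm{rank}\,\bb C-\lambda I & Q\eb=n$. Once the strictly stable part (from the iteration) and the boundary part (from $U_Q(C)$) are assembled along the invariant decomposition, one obtains $X\geq 0$, and the dual substitution $C\mapsto C^{-1}$, $X\mapsto -X$ of the first paragraph yields $X\leq 0$ when $\mu(C)\geq 1$.
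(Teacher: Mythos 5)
You get the two peripheral ingredients right: the inversion trick trading $\mu(C)\geq 1$ for $\rho(C)\leq 1$ (though note that your new triple $(C^{-1},-X,YC)$ has a non-Hermitian third entry, so you cannot literally cite the statement being proved and would have to re-run its proof; the paper instead treats the two cases by one parallel argument), and the convergent Stein-series argument when $\rho(C)<1$. But everything the lemma is actually about happens on the unit circle, and there your text stops at ``should force'' and ``I expect the technical heart to be'': the decisive step is announced, not proved. That is a genuine gap, not a completed argument.

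Moreover, the gap could not be closed along the lines you sketch. First, Jordan structure at unimodular eigenvalues cannot be excluded: with $X=Y=0$ and $Q=I$ all hypotheses hold for an arbitrary $C$ with $\rho(C)\leq 1$, Jordan blocks included. Second, even if you proved that the compression of $X$ to $\bigoplus_{|\lambda|=1}G_{\lambda}(C)$ is positive semidefinite, assembling this with positive semidefiniteness of the stable compression does not yield $X\geq 0$: the two generalized eigenspaces need not be orthogonal, and the off-diagonal blocks of $X$ are uncontrolled. The paper's proof establishes a stronger fact that removes both obstacles at once. For a unimodular eigenpair $(\lambda,u)$ one has $u^{H}\mathcal{S}_{C}(X)u=(1-|\lambda|^{2})u^{H}Xu=0$; since $\mathcal{S}_{C}(X)\geq (YC)^{H}Q(YC)\geq 0$, this forces $\mathcal{S}_{C}(X)u=0$ and $QYu=0$, and from $u^{H}\mathcal{S}_{C}(X)=0$ one gets $Xu\in\mathrm{Ker}(C^{H}-\bar{\lambda}I)$; the condition $\lambda\in U_{Q}(C)$ then gives $Xu=Yu=0$ (using $Xu=Yu$, which is automatic in the paper's application of the lemma, where the two matrices coincide). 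An induction along Jordan chains propagates this to all of $G_{\lambda}(C)$, so in a Jordan basis $X$ is congruent to $0\oplus\widehat{X}_{s}$: the boundary block vanishes identically, which is exactly what kills the cross terms, and the stable block $\widehat{X}_{s}$ is then handled by the convergent series $\mathcal{S}_{J_{s}}^{-1}$ as in your step for $\rho<1$. The resolvent identity of Lemma~\ref{pro1111} that you propose to run plays no role here; the working tool is this direct eigenvector/Jordan-chain kernel computation, in the spirit of Lemma~\ref{lem2p}, which you correctly sensed was the relevant mechanism but did not carry out.
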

\begin{proof}
 %Observe from
%\eqref{13} we have
%\begin{align}\label{14}
% \mathcal{S}_{C}(X_0)&\geq(\overline{X_0} C)^HQ_1(\overline{X_0} C)
% +(X_0C)^H\overline{Q_1}(X_0C)\geq (X_0C)^H\overline{Q_1}(X_0C)\geq 0.%\nonumber\\
% %& \geq C^H[{X_0}^H\overline{Q_1}{X_0} ]C\geq 0,
%\end{align}
Our attention is focused on the case in which $\rho(C)\leq 1$
since the other proof procedure is very similar. We first claim that
\begin{align}\label{15}
\mbox{G}_{\mathbb{D}_{=}\cap U_{Q}(C)}(C)\cap\mbox{Ker}(X-Y)\subseteq\mbox{Ker}(X)\cap\mbox{Ker}(Y).
 \end{align}
For the sake of convenience, we factorize $Q$ as the Cholesky decomposition  $Q=Q_1 Q_1^H$. Note that $\mbox{Ker}(Q)=\mbox{Ker}(Q_1^H)$.
Let ${\lambda}\in\sigma(C)\cap\mathbb{D}_=$ and $u\in\mbox{Ker}(C-\lambda I)$ with $u\neq 0$. We notice that $\bar{\lambda}\in\sigma(C^H)$ and $u^H C^H=\bar{\lambda}u^H$. Then, a direct computation $0=u^H \mathcal{S}_{C}(X)u\geq (u^H C^HY{Q_1})(u^H C^HY{Q_1})^H\geq 0$ implies that
%\begin{align*}
%  0=Q_2X_0Au &={\lambda} Q_2X_0u, \\
%  0=S_A(X_0)u &=\bar{\lambda}( \lambda I-A^H)X_0u.
%\end{align*}
\begin{align*}
% 0&=u^H {C}^H {\overline{X_0}}Q_2, \\
  0&=u^H C^H {Y}{Q_1}=\bar{\lambda}u^H{Y}{Q_1}, \\
  0&=u^H \mathcal{S}_{C}(X)=\bar{\lambda}u^H X( \lambda I-C).
\end{align*}
Under the assumption we conclude that $Xu=Yu=0$ since $u\in\mbox{Ker}(X-Y)$.
Let $(C-\lambda I)v=u$ and thus $(C-\lambda I)^2v=0$. Observing the quadratic form
$v^H \mathcal{S}_{C}(X)v$ again gives us
\begin{align*}
0&= v^H(X-C^HXC)v=v^HXv-(\overline{\lambda} v^H+u^H)X(\lambda v+u)\\
&=-(\lambda u^H X v+\bar{\lambda}v^H X u+u^H X u)\geq (v^H C^HY{Q_1})(v^H C^HY{Q_1})^H\geq 0,
\end{align*}
and we have some analog equalities
\begin{align*}
  %0&=v^H {C}^H {\overline{X_0}}{Q_2},\\
  0&=v^H C^HY{ Q_1} =\bar{\lambda}v^H Y{Q_1}, \\
  0&=v^H \mathcal{S}_{C}(X)=\bar{\lambda}v^H X(\lambda I-C).
\end{align*}
Therefore, $Xv=Yv=0$ for $v\in\mbox{Ker}(X-Y)$ by using the assumption, and an induction argument proves that $X v_i=Y v_i=0$ for a Jordan chain $\{v_i\}$ corresponding to the unimodular eigenvalue $\lambda$ of $C$. Here $\{v_i\}\subseteq\mbox{Ker}(X-Y)$. This completes the proof of~\eqref{15}.

{Let $J_{C}=P^{-1}C P$ be the Jordan canonical form of $C$. Suppose that $J_{C}= J_1 \oplus J_s$, where $J_1\in\mathbb{C}^{m_1\times m_1}$ with $\sigma(J_1)\subseteq\mathbb{D}_=$, $J_s\in\mathbb{C}^{m_2\times m_2}$ with $\rho(J_s)< 1$}. Let $\widehat{X}:=P^H X P$.
Note that
$
\mathcal{S}_{J_C}(\widehat{X})=\widehat{X}-J_{C}^H\widehat{X} J_{C}=P^H\mathcal{S}_{C}({X})P.
%\widehat{Y}_0=\mathcal{S}_{J_{C}}(\widehat{X}_0).
$
%where $\widehat{B}=P^{-1} B P$ and $\widehat{X}_0=P^H X_0 P$.
% and $\widehat{Y}_0=P^H Y_0 P$.
Concerning the part~1, a trivial verification shows that $\widehat{X}= \widehat{X}_{s} \oplus 0_{m_2}$ when each assumption hold, where $\widehat{X}_{s}\in\mathbb{H}_{m_1}$. Further computation having $\mathcal{S}_{J_s}(\widehat{X}_s)\oplus 0_{m_2}=\mathcal{S}_{J_C}(\widehat{X})\geq \widehat{Y}:=P^H((Y C)^HQ(Y C))P\geq 0$ yield $\widehat{Y}= \widehat{Y}_{s} \oplus 0_{m_2}$, where $\widehat{Y}_{s}\in\mathbb{H}_{m_1}$. Observes that $\widehat{X}_{s}\in\mathbb{H}_{m_1}$ satisfies the inequality $\mathcal{S}_{J_s}(\widehat{X}_{s})\geq\widehat{Y}_{s}\geq 0$. Therefore, $X=P^{-H}(\widehat{X}_{s}\oplus 0_{m_2})P^{-1}\geq 0$ since
$\widehat{X}_{s}\geq\mathcal{S}_{J_s}^{-1}(\widehat{Y}_{s})$ is a positive semidefinite matrix when $\rho(J_s)<1$. The part~2 is very similar and thus the proof is omitted.
%The rest is a consequence of two the previous parts.

%
%Finally, it is well-known that $\mbox{rank}(\bb \bar{\lambda} I-A^H \\ Q_2^H \eb)=n$ with $|\lambda|= 1$ when $(A,Q_2)$ is d-stabilizable or is equivalent to $(A,Q_1)$ is d-stabilizable. The result follows.
\end{proof}
%\begin{Lemma}
%Let $B>0$. Then, $\mbox{Ker}(A^H B A)=\mbox{Ker}(A)$ for any $A\in\mathbb{C}^{n\times n}$.
%\end{Lemma}

%\begin{Remark}
% $\mathbb{T}\neq\emptyset\Rightarrow$ $(\widetilde{A},\widetilde{B})$ is stabilizable $\Rightarrow(\widehat{T}_{Y},\overline{B})$ is stabilizable if $Y$ is an almost stabilizing solution.
%\end{Remark}
An useful property about \eqref{Req-c} is characterized in the following result.
\begin{Proposition}\label{2222}
%Let the bilinear operator $K(\cdot,\cdot):\mathrm{dom}(\mathcal{R})\times \mathrm{dom}(\mathcal{R})\rightarrow \mathbb{H}_n$ be defined by
%\[
%K(Y,X) := (F_{{Y}}-F_{{X}})^HR_X(F_{{Y}}-F_{{X}})
%\]
%and $\Delta_{Y,X}:=\overline{Y}-\overline{X}$ for any $X,Y\in \mathrm{dom}(\mathcal{R})$.
Assume that $H\in\mathrm{dom}({\mathcal{R}})$. Let $\Delta_{Y,X}:=Y-X$ for any $X,Y\in\mathbb{H}_n$.
\begin{enumerate}
  \item For any $X,Y\in\mathrm{dom}(\mathcal{R})$, we have
    \begin{align*}
  \widehat{K}(Y,X)=({\Delta_{Y,X}}\widehat{T}_{Y})^H[\widehat{B}\widehat{R}^{-1}_X\widehat{B}^H]({\Delta_{Y,X}}\widehat{T}_{{Y}}).%&=(\overline{\Delta_{Y,X}}T_{Y})^H[G(I+\overline{X}G)^{-1}](\overline{\Delta_{Y,X}}T_{{Y}})
   \end{align*}
  \item For any $X\in\mathbb{R}_\leq$ and $Y\in \mathbb{R}_\geq$, then $\mathcal{S}_{\widehat{T}_{Y}}(\Delta_{Y,X})\geq \widehat{K}(Y,X).$
\end{enumerate}
\end{Proposition}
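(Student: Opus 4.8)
The plan is to handle the two parts separately: part~1 is a direct algebraic identity, while part~2 follows by comparing the Stein-type identity \eqref{Req-c} of Lemma~\ref{lemma23} at the two arguments $X$ and $Y$ and then transferring a sign from the CDARE residual to the transformed-DARE residual.

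For part~1, I would first record the standard ``difference of feedback gains'' identity for the transformed-DARE. Writing $\widehat{T}_Y=\widehat{A}-\widehat{B}\widehat{F}_Y$ and using $\widehat{R}_X\widehat{F}_X=\widehat{B}^H X\widehat{A}$ together with $\widehat{R}\widehat{F}_Y=\widehat{B}^H Y\widehat{T}_Y$, a short computation gives $\widehat{R}_X(\widehat{F}_X-\widehat{F}_Y)=\widehat{B}^H(X-Y)\widehat{T}_Y$, that is,
\[
\widehat{F}_Y-\widehat{F}_X=\widehat{R}_X^{-1}\widehat{B}^H\Delta_{Y,X}\widehat{T}_Y .
\]
Substituting this into $\widehat{K}(Y,X)=(\widehat{F}_Y-\widehat{F}_X)^H\widehat{R}_X(\widehat{F}_Y-\widehat{F}_X)$ and cancelling $\widehat{R}_X^{-1}\widehat{R}_X\widehat{R}_X^{-1}=\widehat{R}_X^{-1}$ (valid since $\widehat{R}_X$ is Hermitian, as is $\Delta_{Y,X}$) yields exactly $(\Delta_{Y,X}\widehat{T}_Y)^H[\widehat{B}\widehat{R}_X^{-1}\widehat{B}^H](\Delta_{Y,X}\widehat{T}_Y)$. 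This part is routine.

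For part~2, I would apply Lemma~\ref{lemma23} with the \emph{common} linearization point $Y$ at both arguments. Since $\widehat{\mathcal{K}}(Y,Y)=0$, subtracting the instance of \eqref{Req-c} at $X$ from the instance at $Y$ and invoking the linearity of $\mathcal{S}_{\widehat{T}_Y}$ cancels the $-\widehat{H}$ and $-\widehat{\mathcal{K}}(Y,0)$ terms and produces the clean identity
\[
\mathcal{S}_{\widehat{T}_{Y}}(\Delta_{Y,X})-\widehat{K}(Y,X)=\bigl(Y-\widehat{\mathcal{R}}(Y)\bigr)-\bigl(X-\widehat{\mathcal{R}}(X)\bigr).
\]
It then remains to prove the right-hand side is positive semidefinite, for which I would transfer the sign of the CDARE residual $g(Z):=Z-\mathcal{R}(Z)$ to the doubled residual $Z-\widehat{\mathcal{R}}(Z)$. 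The key identity here is
\[
Z-\widehat{\mathcal{R}}(Z)=g(Z)+T_Z^H\,\overline{g(Z)}\,T_Z ,
\]
obtained by setting $Y=Z$ in the equality \eqref{f9} derived inside the proof of Proposition~\ref{pro6}: its first two right-hand terms equal $Z-\widehat{\mathcal{R}}(Z)$ by Lemma~\ref{lemma23} at $Y=Z$, while $g(Z)=\mathcal{C}_{T_Z}(Z)-H_{F_Z}$ by \eqref{Req-a} of Proposition~\ref{lem2p1} so that $\overline{\mathcal{C}}_{T_Z}(Z)-\overline{H}_{F_Z}=\overline{g(Z)}$.

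Because both conjugation and congruence preserve the matrix order, $g(X)\le 0$ for $X\in\mathbb{R}_\le$ forces $X-\widehat{\mathcal{R}}(X)\le 0$, whereas $g(Y)\ge 0$ for $Y\in\mathbb{R}_\ge$ forces $Y-\widehat{\mathcal{R}}(Y)\ge 0$; hence the right-hand side of the clean identity is $\ge 0$, giving $\mathcal{S}_{\widehat{T}_Y}(\Delta_{Y,X})\ge\widehat{K}(Y,X)$. I expect this sign-transfer step to be the main obstacle: it is the only place where the conjugate (antilinear) structure of $\mathcal{R}$ interacts with the doubled operator $\widehat{\mathcal{R}}=\mathcal{R}\circ\mathcal{R}$, and it hinges on the congruence term $T_Z^H\overline{g(Z)}T_Z$ inheriting the sign of $g(Z)$. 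I would also dispatch the minor domain bookkeeping, namely that $X,Y\in\mathrm{dom}(\widehat{\mathcal{R}})$ so that Lemma~\ref{lemma23} applies, which is guaranteed by $H\in\mathrm{dom}(\mathcal{R})$ together with $X,Y\in\mathbb{R}_\le\cup\mathbb{R}_\ge\subseteq\mathrm{dom}(\mathcal{R})$.
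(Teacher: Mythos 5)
Your part~1 is correct and is essentially the paper's own argument: both derive $\widehat{F}_Y-\widehat{F}_X=\widehat{R}_X^{-1}\widehat{B}^H\Delta_{Y,X}\widehat{T}_Y$ and substitute into the definition of $\widehat{K}(Y,X)$. Your ``clean identity'' in part~2, $\mathcal{S}_{\widehat{T}_{Y}}(\Delta_{Y,X})-\widehat{K}(Y,X)=\bigl(Y-\widehat{\mathcal{R}}(Y)\bigr)-\bigl(X-\widehat{\mathcal{R}}(X)\bigr)$, is also exactly the paper's computation (two instances of \eqref{Req-c} with common linearization point $Y$, subtracted). But the paper stops there: it reads the hypotheses as sign conditions on the residuals of the transformed DARE $\widehat{\mathcal{R}}$, i.e.\ effectively $X\in\widehat{\mathbb{R}}_\leq$ and $Y\in\widehat{\mathbb{R}}_\geq$, which is also how Proposition~\ref{2222} is later invoked in Lemma~\ref{uniq}. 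You instead take the hypotheses literally as CDARE conditions $X\in\mathbb{R}_\leq$, $Y\in\mathbb{R}_\geq$ and insert a transfer step from the $\mathcal{R}$-residual to the $\widehat{\mathcal{R}}$-residual. That transfer step is where your proof breaks down.

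Writing $g(Z):=Z-\mathcal{R}(Z)$, the key identity you invoke is false. From $\mathcal{R}(Z)-\mathcal{R}(W)=T_Z^H(\overline{Z}-\overline{W})T_W$ with $W=\mathcal{R}(Z)$, the correct statement is $Z-\widehat{\mathcal{R}}(Z)=g(Z)+T_Z^H\,\overline{g(Z)}\,T_{\mathcal{R}(Z)}$: the right-hand factor is $T_{\mathcal{R}(Z)}$, not $T_Z$, so the correction term is not a congruence of $\overline{g(Z)}$ and carries no sign information. Your derivation conflated $\widehat{T}_Z=\overline{T_Z}T_Z$ (the matrix appearing in \eqref{f9}) with the DARE closed-loop matrix $\widehat{T}^{(D)}_Z=\overline{T_Z}T_{\mathcal{R}(Z)}$ (the matrix required for Lemma~\ref{lemma23} to hold); by Proposition~\ref{pro6} these coincide only on $\mathbb{R}_=$, whereas your $Z$ is merely a sub- or supersolution. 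A scalar check refutes your identity: for $\mathcal{R}(x)=x/(1+x)+1$ one has, at $x=0$, $g(0)=-1$ and $0-\widehat{\mathcal{R}}(0)=-3/2$, whereas $g(0)+|T_0|^2\,\overline{g(0)}=-2$. Worse, the sign transfer itself is false, so the step cannot be repaired: taking $|a|=b=r=1$ and $h=-3$, the point $x=-1.01$ satisfies $x\le\mathcal{R}(x)=98$, yet $\widehat{\mathcal{R}}(x)=\mathcal{R}(98)=-199/99<x$; pairing it with $Y=-0.99\in\mathbb{R}_\geq$ gives $(Y-\widehat{\mathcal{R}}(Y))-(x-\widehat{\mathcal{R}}(x))=10101/10100-9901/9900<0$, so under the literal unhatted reading even the asserted conclusion of part~2 fails. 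The proposition therefore has to be read with $\widehat{\mathbb{R}}_\leq,\widehat{\mathbb{R}}_\geq$ (as the paper's proof and its later application require), and with that reading your clean identity already finishes the proof. (A minor related slip: $X\in\mathrm{dom}(\mathcal{R})$ alone does not guarantee $X\in\mathrm{dom}(\widehat{\mathcal{R}})$; one needs $\mathcal{R}(X)\in\mathrm{dom}(\mathcal{R})$, equivalently $\det(I+\widehat{G}X)\neq 0$.)
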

\begin{proof}\par\noindent

\begin{enumerate}
\item[(1)]
   From $\widehat{F}_{Y}-\widehat{F}_{X}=\widehat{R}_Y^{-1}\widehat{B}^H({Y}-{X})\widehat{T}_{{X}}
      =\widehat{R}_X^{-1}\widehat{B}^H({Y}-{X})\widehat{T}_{{Y}}$ we deduce that
\[
\widehat{K}(Y,X)=\widehat{T}_{Y}^H({Y}-{X})\widehat{B}\widehat{R}_X^{-1}\widehat{B}^H({Y}-{X})\widehat{T}_{Y},
\]
and the resulting equality immediately follows.
%\begin{align*}
%&B(R+B^H\overline{X}B)^{-1}B^H=
%BR^{-1}(I+B^H\overline{X}BR^{-1})^{-1}B^H\\
%&=BR^{-1}B^H(I+\overline{X}BR^{-1}B^H)^{-1}
%=G(I+\overline{X}G)^{-1}.
%\end{align*}
% Concerning the remaining, we observe that
%\begin{align*}
%  [G(I+{X}G)^{-1}]\Delta_{Y,X}T_{Y}&=(I+GX)^{-1}[I+GY-(I+GX)](I+GY)^{-1}A\\
%  &=T_X-T_{Y}.
%\end{align*}
%The result follows from the identity $Y-X=T_{Y}^H(Y-X)T_X$.
%\item
%A direct computation shows that
%\begin{subequations}
%\begin{align}\label{11}
%&\Delta_{Y,X}
%\geq T_{Y}^H\overline{Y}A-A^H\overline{X}T_{X}
%=T_{Y}^H(\overline{Y}(I+G\overline{X})-(I+\overline{Y}G)\overline{X})T_{X}\nonumber\\
%&=T_{Y}^H(\overline{\Delta_{Y,X}})T_{X},
%%=T_{X}^H(\overline{\Delta_{Y,X}})T_{X}
%%+T_{X}^H\overline{\Delta_{Y,X}}G(I+YG)^{-1}\overline{\Delta_{Y,X}}T_X.
%\end{align}
%and
%\begin{align}\label{22}
%&T_X-T_Y=(A-BF_X)-(A-BF_Y)=B(F_Y-F_X)\nonumber\\
%&=BR_Y^{-1}B^H(\overline{Y}-\overline{X})T_{{Y}}=G(I+\overline{Y}G)^{-1}(\overline{Y}-\overline{X})T_{{Y}}.
%\end{align}
%\end{subequations}
%Substituting $T_X$ in \eqref{22} into \eqref{11} we obtain
%\[
%\Delta_{Y,X}\geq T_{Y}^H(\overline{\Delta_{Y,X}})T_{X}
%=T_{Y}^H(\overline{\Delta_{Y,X}})T_{Y}
%+T_{Y}^H\overline{\Delta_{Y,X}}G(I+\overline{X}G)^{-1}\overline{\Delta_{Y,X}}T_Y,
%\]
%or
%\begin{align*}
%\mathcal{C}_{T_{Y}}(\Delta_{Y,X})
%\geq (\overline{\Delta_{Y,X}}T_{Y})^H[(I+G\overline{X})^{-1}G]
%(\overline{\Delta_{Y,X}}T_{Y})=K(Y,X).
%\end{align*}
\item[(2)]
The proof is based on the following observation. From the
assumption and \eqref{Req-c} we have
\begin{align*}
 \mathcal{S}_{\widehat{T}_{Y}}(\Delta_{Y,X})
 &=\mathcal{S}_{\widehat{T}_{Y}}({Y})-\mathcal{S}_{\widehat{T}_{Y}}(X)
          \geq\widehat{H}+\widehat{K}(Y,0)-\left[ \widehat{H}+\widehat{K}(Y,0)-\widehat{K}(Y,X) \right] = \widehat{K}(Y,X).
\end{align*}
This completes the proof.

%\item
%According to~\eqref{Req-c}, the resulting equality immediately follows the identity
%\begin{align*}
%  \mathcal{S}_{\widehat{T}_{Y}}(\Delta_{Y,X})&= \widehat{H}-(\widehat{\mathcal{K}}(Y,Y)-\widehat{\mathcal{K}}(Y,0))  -(\widehat{H}-(\widehat{\mathcal{K}}(X,Y)-\widehat{\mathcal{K}}(Y,0)))
%  \\
%  &=\widehat{\mathcal{K}}(X,Y).
%\end{align*}
%\item It is a trivial observation.
\end{enumerate}
\end{proof}
With Proposition~\ref{uniq}, we claim that almost stabilizing and
almost anti-stabilizing solutions coincide with some extremal solutions by using several lemmas. The first result gives more insights into the feature of the upper and lower bounds of $\widehat{\mathbb{R}}_{\leq}\cap\widehat{\mathbb{P}}$ and $\widehat{\mathbb{R}}_{\geq}\cap\widehat{\mathbb{E}}$, which is the key for the approach presented.
\begin{Lemma} \label{uniq}
\par\noindent
%Let $X\in\mathcal{R}_{=}$. Then,
%Assume that $\mbox{rank}(\bb \widehat{A}-\lambda I & \widehat{B}\eb)=n$ for all modular eigenvalues of $\widehat{A}$.
Assume that $Y\in\widehat{\mathbb{R}}_\geq$ with property
 \begin{align}\label{P}
\mbox{rank}(\bb \widehat{T}_Y-\lambda I & \widehat{B}\eb)=n\quad\mbox{for all }\lambda\in\sigma(\widehat{T}_Y)\cap \mathbb{D}_=.
 \end{align}
 %for all modular eigenvalues of $\widehat{T}_Y$.
 Let $X\in\widehat{\mathbb{R}}_{\leq}\cap\widehat{\mathbb{P}}$ and $Z\in\widehat{\mathbb{R}}_{\leq}\cap\widehat{\mathbb{E}}$.
Then,
\begin{enumerate}
  \item $Z\geq Y \geq X$ if $\rho(\widehat{T}_Y)\leq 1$.
  \item $Z\leq Y \leq X$ if $\mu(\widehat{T}_Y)\geq 1$.
\end{enumerate}

%Under the same sufficient conditions for the existence of maximal solution of Eq.~\eqref{cdare}, an almost stabilizing solution $Y$ of Eq.~\eqref{cdare} is equal to $\mathbf{X_M}$ if property
%\begin{align}\label{3333}
%\mbox{E}_{\bar{\lambda}}(\widehat{A}^H(I+\widehat{G}^HY)^{-1})\cap \mbox{Ker}(\overline{B}^H)=\{0\}
%\end{align}
%holds for all $\lambda\in\sigma(\widehat{T}_{Y})\cap\mathbb{D}_{=}$, or equivalently, the matrix pair $(\widehat{T}_{Y},\overline{B})$ is stabilizable or equivalently, the matrix pair $(\overline{A}{T}_{Y},\overline{B})$ is stabilizable.
\end{Lemma}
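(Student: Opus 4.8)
The plan is to reduce both order relations to the single Stein-type comparison of Lemma~\ref{lem1111}, applied with the closed-loop matrix $C=\widehat{T}_Y$; the dichotomy there between $\rho(\widehat{T}_Y)\le 1$ and $\mu(\widehat{T}_Y)\ge 1$ is exactly what will produce the two cases of the statement, and the rank property~\eqref{P} will serve as the spectral side condition $\sigma(\widehat{T}_Y)\cap\mathbb{D}_=\subseteq U_Q(\widehat{T}_Y)$.

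First I would treat the bound of $Y$ against $X$. Since $X\in\widehat{\mathbb{R}}_\leq$ and $Y\in\widehat{\mathbb{R}}_\geq$, Proposition~\ref{2222}(2) gives $\mathcal{S}_{\widehat{T}_Y}(\Delta_{Y,X})\ge\widehat{K}(Y,X)$, and Proposition~\ref{2222}(1) rewrites the right-hand side as $(\Delta_{Y,X}\widehat{T}_Y)^H\bigl[\widehat{B}\widehat{R}_X^{-1}\widehat{B}^H\bigr](\Delta_{Y,X}\widehat{T}_Y)$. Because $X\in\widehat{\mathbb{P}}$ we have $\widehat{R}_X>0$, so $Q:=\widehat{B}\widehat{R}_X^{-1}\widehat{B}^H\ge 0$, which is precisely the standing hypothesis of Lemma~\ref{lem1111} with $C=\widehat{T}_Y$ and both free Hermitian matrices taken to be $\Delta_{Y,X}$. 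The only thing to check is that~\eqref{P} supplies the side condition: invertibility of $\widehat{R}_X$ forces $\mathrm{Ker}(Q)=\mathrm{Ker}(\widehat{B}^H)$, so that $\mathrm{rank}\,[\,\widehat{T}_Y-\lambda I\ \ Q\,]=\mathrm{rank}\,[\,\widehat{T}_Y-\lambda I\ \ \widehat{B}\,]$ for every $\lambda$, and via the rank description~\eqref{defU2} the inclusion $\sigma(\widehat{T}_Y)\cap\mathbb{D}_=\subseteq U_Q(\widehat{T}_Y)$ is then exactly~\eqref{P}. Lemma~\ref{lem1111} returns $\Delta_{Y,X}=Y-X\ge 0$ when $\rho(\widehat{T}_Y)\le1$ and $Y-X\le0$ when $\mu(\widehat{T}_Y)\ge1$, giving $Y\ge X$ in part~1 and $Y\le X$ in part~2.

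For the bound of $Y$ against $Z$ I would mirror this argument on the dual-DARE $\widehat{\mathcal{R}}_d$ from the proof of Lemma~\ref{thm3p34}: under $W\mapsto -W$ the set $\widehat{\mathbb{E}}$ is carried to its dual counterpart and, by the closed-loop inversion~\eqref{c2}, $\widehat{T}_Y$ is sent to its inverse, so that $\rho(\widehat{T}_Y)\le1$ and $\mu(\widehat{T}_Y)\ge1$ interchange while the rank property~\eqref{P} is preserved; transporting the comparison just proved and negating back should yield $Z\ge Y$ in part~1 and $Z\le Y$ in part~2. The hard part will be precisely this $\widehat{\mathbb{E}}$-comparison: since $Z\in\widehat{\mathbb{E}}$ means $\widehat{R}_Z<0$, the quadratic remainder $\widehat{K}(Y,Z)$ coming out of Proposition~\ref{2222} is negative semidefinite, so the inequality $\mathcal{S}_{\widehat{T}_Y}(\Delta_{Y,Z})\ge\widehat{K}(Y,Z)$ no longer has the sign demanded by Lemma~\ref{lem1111}, and $Z$ cannot be fed to that lemma directly the way $X$ was. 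Repairing this sign—either through the dual-DARE together with~\eqref{c2}, or by re-running the resolvent estimate underlying Lemma~\ref{lem1111} with the roles of $\widehat{\mathbb{P}}$ and $\widehat{\mathbb{E}}$ exchanged—and verifying that~\eqref{P} transfers correctly through that passage is where the real substance lies; the remaining ingredients, namely the kernel identity $\mathrm{Ker}(\widehat{B}M\widehat{B}^H)=\mathrm{Ker}(\widehat{B}^H)$ for invertible Hermitian $M$ and the algebra relating $\Delta_{Y,X}$ and $\Delta_{Y,Z}$ to $\widehat{T}_Y$, are routine.
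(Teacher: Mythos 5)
Your treatment of the pair $(Y,X)$ is exactly the paper's proof: Proposition~\ref{2222} gives $\mathcal{S}_{\widehat{T}_Y}(\Delta_{Y,X})\geq\widehat{K}(Y,X)=(\Delta_{Y,X}\widehat{T}_Y)^H[\widehat{B}\widehat{R}_X^{-1}\widehat{B}^H](\Delta_{Y,X}\widehat{T}_Y)\geq 0$, the identity $\mathrm{Ker}(\widehat{B}\widehat{R}_X^{-1}\widehat{B}^H)=\mathrm{Ker}(\widehat{B}^H)$ converts \eqref{P} into the inclusion $\sigma(\widehat{T}_Y)\cap\mathbb{D}_=\subseteq U_Q(\widehat{T}_Y)$, and Lemma~\ref{lem1111} delivers $Y\geq X$ or $Y\leq X$ according to the two spectral cases. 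The genuine gap is the pair $(Y,Z)$: you correctly observe that $\widehat{K}(Y,Z)\leq 0$ blocks feeding $Z$ to Lemma~\ref{lem1111} the way $X$ was fed, but you then stop, offering only an unexecuted dual-DARE transport. That sketch would not close the gap: by the paper's own correspondence \eqref{c1}, negation pairs $\widehat{\mathbb{E}}$-type objects with $\widehat{\mathbb{E}}_d$-type objects, not with $\widehat{\mathbb{P}}_d$-type ones, so after passing to $\widehat{\mathcal{R}}_d$ you face the identical sign obstruction; no inequality usable by Lemma~\ref{lem1111} is ever produced for $\Delta_{Z,Y}$.

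What the paper's ``proved in the same way'' actually means is a role swap, not a dual equation: keep the Stein operator anchored at $\widehat{T}_Y$, but rerun the proof of Proposition~\ref{2222}(2) with $Z$ as the super-solution and $Y$ as the sub-solution. This yields $\mathcal{S}_{\widehat{T}_Y}(\Delta_{Z,Y})\geq-\widehat{K}(Y,Z)$, and by Proposition~\ref{2222}(1), $-\widehat{K}(Y,Z)=(\Delta_{Z,Y}\widehat{T}_Y)^H[\widehat{B}(-\widehat{R}_Z)^{-1}\widehat{B}^H](\Delta_{Z,Y}\widehat{T}_Y)\geq 0$ precisely because $\widehat{R}_Z<0$: the negativity you treated as the obstruction is the mechanism, since it makes $-\widehat{R}_Z^{-1}$ positive definite (definiteness, not the mere invertibility you invoke, is also what the kernel identity requires). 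Then \eqref{P} and Lemma~\ref{lem1111} apply verbatim and give $\Delta_{Z,Y}\geq 0$ when $\rho(\widehat{T}_Y)\leq 1$ and $\Delta_{Z,Y}\leq 0$ when $\mu(\widehat{T}_Y)\geq 1$. One caveat vindicates your hesitation: this swap needs $Y\in\widehat{\mathbb{R}}_\leq$ and $Z\in\widehat{\mathbb{R}}_\geq$, so the hypotheses exactly as printed ($Y\in\widehat{\mathbb{R}}_\geq$ only, $Z\in\widehat{\mathbb{R}}_\leq\cap\widehat{\mathbb{E}}$) do not support the $Z$-comparison at all; indeed for the scalar DARE with $\widehat{A}=\widehat{B}=\widehat{R}=1$, $\widehat{H}=0$, the choices $Y=0$ and $Z=-2$ satisfy every printed hypothesis of part~1, yet $Z<Y$. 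The statement must be read, as it is used in Lemma~\ref{uniq2}, with $Y\in\widehat{\mathbb{R}}_=$ and $Z\in\widehat{\mathbb{R}}_\geq\cap\widehat{\mathbb{E}}$; under that reading the role-swapped argument above completes the proof.
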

\begin{proof}
%Let $Y$ be a Hermitian solution of Eq.~\eqref{cdare} with $\rho(\widehat{T}_{Y})\leq 1$ and satisfying property \eqref{3333}. We claim that $Y \geq \mathbf{X_M}$ and thus we conclude that $Y=\mathbf{X_M}$.
We only need to prove the part~1, since the proof procedure of the remaining parts are very similar.
% is a direct consequence of
%Recall that Proposition~\ref{2222} we have
%Observe from the inequality $\mathcal{C}_{T_{X}}(\Delta_{X,Y})\geq K(X,Y)\geq 0$ we have
%\begin{align}\label{145}
% \mathcal{S}_{C}(X_0)&\geq(\overline{X_0} C)^HQ_1(\overline{X_0} C)
% +(X_0C)^H\overline{Q_1}(X_0C)\geq (X_0C)^H\overline{Q_1}(X_0C)\geq 0.%\nonumber\\
% %& \geq C^H[{X_0}^H\overline{Q_1}{X_0} ]C\geq 0,
%\end{align}
It can be verified by Proposition~\ref{2222} that
\begin{align*}
\mathcal{S}_{\widehat{T}_{Y}}(\Delta_{Y,X})&\geq \widehat{K}(Y,X)=({\Delta_{X,Y}}\widehat{T}_{X})^H[\widehat{B}\,\widehat{R}_{X}^{-1}\widehat{B}^H](\overline{\Delta}_{X,Y}\widehat{T}_{{X}})\geq 0.
\end{align*}
In addition, we observe that
$\sigma(\widehat{T}_X)\cap \mathbb{D}_=\subseteq U_{\widehat{B}\,\widehat{R}_{X}^{-1}\widehat{B}^H}(\widehat{T}_X)$
by the assumption~\eqref{P}. Note that $\mathrm{Ker}(\widehat{B}\,\widehat{R}_{X}^{-1}\widehat{B}^H)=\mathrm{Ker}(\widehat{B}^H)$.
Therefore by applying Lemma~\ref{lem1111} we see that $Y-X\geq 0$ and the remainder inequality $Y-Z\leq 0$ can be proved in the same way.

 %Let $u\in\mbox{E}_\lambda(T_{\widehat{X}})$ with $|\lambda|=1$. Then,
%\[
%0=u^H S_{T_{\widehat{X}}}(\Delta_{\widehat{X},X}) u=(B^H\Delta_{\widehat{X},X}u)^H (I+B^H X B)(B^H\Delta_{\widehat{X},X}u)
%\]
%and we obtain that $\mbox{E}_\lambda(T_{\widehat{X}})\subseteq \mbox{Ker}(B^H\Delta_{\widehat{X},X})$ with $|\lambda|=1$. Note that
%$\mbox{Ker}(B^H\Delta_{\widehat{X},X}T_{\widehat{X}})\cap \mbox{E}_\lambda(T_{\widehat{X}})=\mbox{Ker}(B^H\Delta_{\widehat{X},X})$ with $\lambda\neq 0$. Therefore, $u\in$
%\[
%0=u^HS_{T_{\widehat{X}}}(\Delta_{\widehat{X},X})=u^H\Delta_{\widehat{X},X}(I-\bar{\lambda}T_{\widehat{X}}).
%\]
%Since $\mbox{rank}(\bb I-\bar{\lambda}T_{\widehat{X}} \\ B^H \eb)=n$ with $|\lambda|\leq 1$ we conclude that $u^H\Delta_{\widehat{X},X}=0$.
%
%Now, suppose that $(T_{\widehat{X}}-\lambda I)v=u$ for some nonzero vector $v$.
%
%
%
%
%
%  {Let $J_{T_{X}}=P^{-1}T_{X}P$ be the Jordan canonical form of $T_{X}$. Suppose that $J_{T_{X_+}}= J_1 \oplus J_s$, where $J_1\in\mathbb{C}^{m_1\times m_1}$ with $\sigma(J_1)\subseteq{\rm bd}(\mathbb{D})$ and $J_s\in\mathbb{C}^{m_3\times m_3}$ with $\rho(J_s)\leq 1$}. One deduces that
%\[
%S_{J_{T_{X}}}(\mathcal{D}_{\widehat{X},\widehat{Y}})=\widehat{K}(\widehat{X},\widehat{Y}):=
%(\widehat{F}_{\widehat{X}}-\widehat{F}_{\widehat{Y}})^H(I+\widehat{B}^H\widehat{X}\widehat{B})(\widehat{F}_{\widehat{X}}-\widehat{F}_{\widehat{Y}}),
%\]
%where $\widehat{X}=P^H X P$, $\widehat{A}=P^{-1} X P$, $\widehat{B}=P^{-1} X P$ and $\widehat{F}_{\widehat{X}}=(I+\widehat{B}^H\widehat{X}\widehat{B})^{-1}\widehat{B}^H\widehat{X}\widehat{A}$.
\end{proof}
%Lemma~\ref{uniq} shows that $\mathbf{\widehat{X}_{a.s.}}\in         \mathbf{U}(\widehat{\mathbb{R}}_\leq\cap\widehat{\mathbb{P}})$ or $\mathbf{\widehat{X}_{a.s.}}\in         \mathbf{L}(\widehat{\mathbb{R}}_\geq\cap\widehat{\mathbb{E}})$ under some mild assumption.
 The next lemma shows that the property~\eqref{P} is a consequence of the assumption $\mathbb{T}\neq\emptyset$ or $\mathbb{O}\neq\emptyset$.
%is hard to check. In fact, an useful sufficient condition for the existence of a unique
%positive definite solution of (3a) can be written as follows.
%We present the result and a proof in our notation because it will be useful for our analysis.
\begin{Lemma}\label{lem3p3}
Assume that $H\in\mathrm{dom}(\mathcal{R})$. Then, %the following statements hold.
%\par\noindent
%\begin{enumerate}
%  \item The pair $(\widehat{A},\widehat{B})$ is stabilizable (resp. anti-stabilizable) if $\mathbb{T}\neq\emptyset$ (resp. $\mathbb{O}\neq\emptyset$).
%  \item
the property~\eqref{P} holds if $\widehat{\mathbb{T}}\neq\emptyset$ or $\widehat{\mathbb{O}}\neq\emptyset$.
  %\item
%  Suppose that $R>0$ and $H\geq 0$. The pair $(\widetilde{A},\widetilde{B})$ is stabilizable if and only if $\mathbb{T}\neq\emptyset$.
  %\item The stabilizing $X_s$ is unique if it exist.
%  \item Assume that
%\begin{equation}\label{ma1}
%  H\in\mbox{dom}(\mathcal{R}),\,
% \mathbb{T}\neq\emptyset\quad \mbox{and}\quad \mathcal{R}_{\leq}\cap\mathbb{P}\neq\emptyset.
%\end{equation}
%If $R>0$ and $H\geq 0$, the following statements hold :%$\mathbb{T}\neq\emptyset$.
%\begin{enumerate}
%  \item[(i)]The pair $(\widehat{A},\widehat{B})$ is stabilizable.
%  %
%  \item[(ii)] The CDARE \eqref{cdare} has a unique and almost stabilizing solution $X_s\in \mathbb{H}_n$. Furthermore, $X_s=\mathbf{X_M}\geq 0$.
%  %%
%%  \item[(iii)] The DARE \eqref{dare} has a almost stabilizing solution $X\in \mathbb{H}_n$.
%%
%%\item[(iv)] The DARE \eqref{dare} has a maximal and almost stabilizing solution $X\in \mathbb{H}_n$.
%\end{enumerate}
%\end{enumerate}
\end{Lemma}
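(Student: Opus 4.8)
The plan is to recognize property~\eqref{P} as the unit-circle part of the Hautus rank test for the pair $(\widehat{A},\widehat{B})$ and to read that test off directly from the two nonemptiness hypotheses. The first step is to note that, for $Y\in\widehat{\mathbb{R}}_\geq\subseteq\mathrm{dom}(\widehat{\mathcal{R}})$, the closed-loop matrix is $\widehat{T}_Y=\widehat{A}-\widehat{B}\widehat{F}_Y$, and the rank in~\eqref{P} is invariant under state feedback: for any $\lambda\in\mathbb{C}$,
\[
\bb \widehat{T}_Y-\lambda I & \widehat{B}\eb
=\bb \widehat{A}-\lambda I & \widehat{B}\eb
\bb I & 0\\ -\widehat{F}_Y & I\eb,
\]
with an invertible right factor, so that $\mathrm{rank}\bb \widehat{T}_Y-\lambda I & \widehat{B}\eb=\mathrm{rank}\bb \widehat{A}-\lambda I & \widehat{B}\eb$. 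Hence~\eqref{P} is equivalent to $\mathrm{rank}\bb \widehat{A}-\lambda I & \widehat{B}\eb=n$ for every $\lambda$ on the unit circle $\mathbb{D}_=$, a condition that no longer involves the particular $Y$.

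Next I would extract (anti-)stabilizability of $(\widehat{A},\widehat{B})$ from the hypotheses, using that the tests defining $\widehat{\mathbb{T}}$ and $\widehat{\mathbb{O}}$ are phrased through $\widehat{T}^{(D)}_W=\widehat{A}-\widehat{B}\widehat{F}_W$, i.e.\ through the same feedback form applied to $\widehat{A}$. If $\widehat{\mathbb{T}}\neq\emptyset$, choose $W\in\widehat{\mathbb{T}}$; then $\rho(\widehat{A}-\widehat{B}\widehat{F}_W)=\rho(\widehat{T}^{(D)}_W)<1$, so $F=\widehat{F}_W$ witnesses stabilizability of $(\widehat{A},\widehat{B})$ in the sense of Definition~\ref{defsta}, and the Hautus characterization recalled thereafter yields $\mathrm{rank}\bb \widehat{A}-\lambda I & \widehat{B}\eb=n$ for all $\lambda\in\mathbb{D}_\geq$. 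Symmetrically, if $\widehat{\mathbb{O}}\neq\emptyset$ then some $W\in\widehat{\mathbb{O}}$ gives $\mu(\widehat{A}-\widehat{B}\widehat{F}_W)=\mu(\widehat{T}^{(D)}_W)>1$, so $(\widehat{A},\widehat{B})$ is anti-stabilizable and $\mathrm{rank}\bb \widehat{A}-\lambda I & \widehat{B}\eb=n$ for all $\lambda\in\mathbb{D}_\leq$. Since $\mathbb{D}_=\subseteq\mathbb{D}_\geq$ and $\mathbb{D}_=\subseteq\mathbb{D}_\leq$, either hypothesis forces full rank on all of $\mathbb{D}_=$, and the feedback invariance of the first step then delivers~\eqref{P}.

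The argument is thus essentially the feedback invariance of the PBH pencil combined with the standard Hautus test, with no genuine analytic difficulty. I expect the only point requiring care to be the bookkeeping that ties both $\widehat{T}_Y=\widehat{A}-\widehat{B}\widehat{F}_Y$ and $\widehat{T}^{(D)}_W=\widehat{A}-\widehat{B}\widehat{F}_W$ to the common base pair $(\widehat{A},\widehat{B})$, so that the feedback appearing in~\eqref{P} and the feedbacks hidden in the definitions of $\widehat{\mathbb{T}}$ and $\widehat{\mathbb{O}}$ are all recognized as perturbations of one and the same $\widehat{A}$.
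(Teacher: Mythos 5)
Your proposal is correct, and its skeleton matches the paper's proof exactly: (i) feedback invariance of the pencil rank, $\mathrm{rank}\bb \widehat{T}_Y-\lambda I & \widehat{B}\eb=\mathrm{rank}\bb \widehat{A}-\lambda I & \widehat{B}\eb$, and (ii) (anti-)stabilizability of $(\widehat{A},\widehat{B})$ deduced from $\widehat{\mathbb{T}}\neq\emptyset$ or $\widehat{\mathbb{O}}\neq\emptyset$, converted to the rank condition on $\mathbb{D}_=$ via the Hautus test. The one genuine difference is how step (ii) is justified: the paper cites Lemma~\ref{thm3p34}, whereas you read stabilizability directly off the definitions --- $W\in\widehat{\mathbb{T}}$ means $\rho(\widehat{T}^{(D)}_W)=\rho(\widehat{A}-\widehat{B}\widehat{F}_W)<1$, so $\widehat{F}_W$ itself is the stabilizing feedback of Definition~\ref{defsta}, and symmetrically for $\widehat{\mathbb{O}}$. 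This is a point in your favor: the equivalences in Lemma~\ref{thm3p34} are stated under extra hypotheses ($\widehat{\mathbb{R}}_\leq\cap\widehat{\mathbb{P}}\neq\emptyset$ or $\widehat{\mathbb{R}}_\geq\cap\widehat{\mathbb{E}}\neq\emptyset$) that Lemma~\ref{lem3p3} does not assume, while the only direction actually needed (nonemptiness implies stabilizability) is the trivial, unconditional one you isolate. So your argument is more self-contained and slightly more careful about hypotheses; the paper's is shorter only because it delegates that step to a previously proved, stronger-hypothesis equivalence.
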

\begin{proof}
%From Lemma~\ref{thm3p34} it follows that $\widehat{T}_X=\widehat{A}-\widehat{B}\widehat{F}_X=\overline{T_X}T_X=\widehat{T}_X$.
%\begin{enumerate}
%\item
%Let $\widehat{F}_X:=\bb \overline{F_X}T_X \\ F_X-F_H\eb\in\mathbb{C}^{2m\times n}$ for any $X\in\mathrm{dom}(\mathcal{R})$. Then, a trivial verification shows $\widehat{T}_X=\overline{T_X}T_X=\widehat{A}-\widehat{B}\widehat{F}_X$.
%Therefore, $\rho(\widehat{A}-\widehat{B}\widehat{F}_X)<1$ when $X\in\mathbb{T}$ which completes the proof.
%\item
 % Let $\widehat{F}_X:=\bb \overline{F_X}T_X \\ F_X-F_H\eb\in\mathbb{C}^{2m\times n}$. Note that
% $T_X=A-BF_X$ Then, a trivial verification shows that
% \begin{align*}
% &\widehat{A}-\widehat{B}\widehat{F}_X=\overline{A}A-\overline{A}BR_H^{-1}B^H\overline{H}A-\bb \overline{B} & \overline{A}B \eb \bb \overline{F_X}T_X \\ F_X-F_H\eb\\
% &=\overline{A}A-\overline{A}BF_H-\overline{B}\overline{F_X}T_X
% -\overline{A}BF_X+\overline{A}BF_H=\overline{T_X}T_X=\widehat{T}_X.
% \end{align*}
The proof is straightforward by the observation
\[
\mbox{rank}(\bb \widehat{T}_Y-\lambda I & \widehat{B}\eb)=\mbox{rank}(\bb \widehat{A}-\widehat{B}\widehat{F}_Y-\lambda I & \widehat{B}\eb)=\mbox{rank}(\bb \widehat{A}-\lambda I & \widehat{B}\eb),
\]
and the pair $(\widehat{A},\widehat{B})$ is stabilizable (resp. anti-stabilizable) if $\widehat{\mathbb{T}}\neq\emptyset$ (resp. $\widehat{\mathbb{O}}\neq\emptyset$) by using Lemma~\ref{thm3p34}
.
 %stabilizability of the matrix pair $(\widehat{A},\widehat{B})$ and talk $F=0$ in the Definition~\ref{defsta}.
%\end{enumerate}
\end{proof}
As a consequence of the above two lemmas, the following result can also be easily obtained.
\begin{Lemma} \label{uniq2}
\par\noindent
\begin{enumerate}
\item
Assume that the matrix pair $(\widehat{A},\widehat{B})$ is stabilizable. When $\widehat{\mathbb{R}}_\leq\cap\widehat{\mathbb{P}}=\emptyset$ (resp. $\widehat{\mathbb{R}}_\geq\cap\widehat{\mathbb{E}}=\emptyset$), $\mathbf{\widehat{X}_{a.s.}}$ exists and unique. In addition, $\mathbf{\widehat{X}_{a.s.}}=\mathbf{\widehat{X}_M}$ (resp. $\mathbf{\widehat{X}_{a.s.}}=\mathbf{\widehat{Y}_m}$).
% when $\widehat{\mathbb{R}}_\leq\cap\widehat{\mathbb{E}}=\emptyset$, $\mathbf{\widehat{X}_{a.s.}}$ exists and unique. In fact,
%$\mathbf{\widehat{X}_{a.s.}}=\mathbf{\widehat{Y}_m}$.
%\begin{enumerate}
%  \item $\mathbf{\widehat{X}_{a.s.}}\geq \mathbf{\widehat{X}_M}$.
%  \item $X\leq Y$ for all $Y\in\mathcal{R}_{\leq}\cap\mathbb{P}$ if
%the matrix pair $(\widehat{T}_{X},\overline{B})$ is anti-stabilizable.
%\end{enumerate}
\item
Assume that the matrix pair $(\widehat{A},\widehat{B})$ is anti-stabilizable. When $\widehat{\mathbb{R}}_\leq\cap\widehat{\mathbb{P}}=\emptyset$ (resp. $\widehat{\mathbb{R}}_\geq\cap\widehat{\mathbb{E}}=\emptyset$), $\mathbf{\widehat{X}_{a.a.s.}}$ exists and unique. In addition, $\mathbf{\widehat{X}_{a.a.s.}}=\mathbf{\widehat{X}_m}$ (resp. $\mathbf{\widehat{X}_{a.a.s.}}=\mathbf{\widehat{Y}_M}$).
\end{enumerate}

%Under the same sufficient conditions for the existence of maximal solution of Eq.~\eqref{cdare}, an almost stabilizing solution $Y$ of Eq.~\eqref{cdare} is equal to $\mathbf{X_M}$ if property
%\begin{align}\label{3333}
%\mbox{E}_{\bar{\lambda}}(\widehat{A}^H(I+\widehat{G}^HY)^{-1})\cap \mbox{Ker}(\overline{B}^H)=\{0\}
%\end{align}
%holds for all $\lambda\in\sigma(\widehat{T}_{Y})\cap\mathbb{D}_{=}$, or equivalently, the matrix pair $(\widehat{T}_{Y},\overline{B})$ is stabilizable or equivalently, the matrix pair $(\overline{A}{T}_{Y},\overline{B})$ is stabilizable.
\end{Lemma}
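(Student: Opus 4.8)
The plan is to derive both assertions from the sandwiching estimate of Lemma~\ref{uniq}, combined with the existence theorems for the extremal solutions of the transformed-DARE~\eqref{dare2-a} and the definiteness propagation of Lemma~\ref{pro1111}. I read the two hypotheses as $\widehat{\mathbb{R}}_\leq\cap\widehat{\mathbb{P}}\neq\emptyset$ and $\widehat{\mathbb{R}}_\geq\cap\widehat{\mathbb{E}}\neq\emptyset$, since otherwise the right-hand sides $\mathbf{\widehat{X}_M}$ and $\mathbf{\widehat{Y}_m}$ would not be defined. Throughout I use that $\widehat{T}^{(D)}_X=\widehat{T}_X$ for every $X\in\widehat{\mathbb{R}}_=$ by Proposition~\ref{pro6}(1), so the spectral conditions defining $\mathbf{\widehat{X}_{a.s.}}$ and $\mathbf{\widehat{X}_{a.a.s.}}$ may be read off $\widehat{T}$.

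For part~1, first I would translate the stabilizability of $(\widehat{A},\widehat{B})$ into $\widehat{\mathbb{T}}\neq\emptyset$ via Lemma~\ref{thm3p34}(1), and then invoke Lemma~\ref{lem3p3} to conclude that the rank property~\eqref{P} holds for every element of $\widehat{\mathbb{R}}_\geq$. Existence is then immediate: the DARE-analogue of Theorem~\ref{thm3p1} supplies $\mathbf{\widehat{X}_M}=\max(\widehat{\mathbb{R}}_\leq\cap\widehat{\mathbb{P}})\in\widehat{\mathbb{R}}_=$ with $\rho(\widehat{T}_{\mathbf{\widehat{X}_M}})\leq 1$, so $\mathbf{\widehat{X}_M}$ is an almost stabilizing solution. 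For uniqueness, let $X$ be any almost stabilizing solution, i.e. $X\in\widehat{\mathbb{R}}_=$ with $\rho(\widehat{T}_X)\leq 1$. Since $\widehat{\mathbb{R}}_=\cap\widehat{\mathbb{P}}\neq\emptyset$, Lemma~\ref{pro1111} gives $\widehat{\mathbb{R}}_=\subseteq\widehat{\mathbb{P}}$, hence $X\in\widehat{\mathbb{R}}_=\cap\widehat{\mathbb{P}}$ and in particular $X\leq\mathbf{\widehat{X}_M}$ by maximality. Conversely, applying Lemma~\ref{uniq}(1) with $Y=X$ (which lies in $\widehat{\mathbb{R}}_\geq$ and satisfies~\eqref{P}) shows $X\geq X'$ for every $X'\in\widehat{\mathbb{R}}_\leq\cap\widehat{\mathbb{P}}$, in particular $X\geq\mathbf{\widehat{X}_M}$; the two inequalities force $X=\mathbf{\widehat{X}_M}$. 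The respective case is identical after replacing $\mathbf{\widehat{X}_M}$ by $\mathbf{\widehat{Y}_m}=\min(\widehat{\mathbb{R}}_\geq\cap\widehat{\mathbb{E}})$, using the $\widehat{\mathbb{E}}$-version of Lemma~\ref{pro1111} and the lower bound $Y\leq Z$ furnished by Lemma~\ref{uniq}(1).

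Part~2 is the mirror image with $\mu$ in place of $\rho$. First I would use Lemma~\ref{thm3p34}(2) to rephrase anti-stabilizability of $(\widehat{A},\widehat{B})$ as $\widehat{\mathbb{O}}\neq\emptyset$, which again delivers~\eqref{P} through Lemma~\ref{lem3p3}. Here existence is no longer automatic: unlike the maximal solution, the minimal sub-solution is produced only through the dual-DARE~\eqref{ddare}, under the identities~\eqref{c1}--\eqref{c2} in which anti-stabilizability of $(\widehat{A},\widehat{B})$ becomes stabilizability of $(\widehat{A}_d,\widehat{B}_d)$, so that $\mathbf{\widehat{X}_m}$ equals the negative of the maximal solution of the dual equation and its reciprocal spectral relation yields $\mu(\widehat{T}_{\mathbf{\widehat{X}_m}})\geq 1$, certifying $\mathbf{\widehat{X}_m}$ as almost anti-stabilizing. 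Uniqueness runs exactly as before but invokes Lemma~\ref{uniq}(2): any almost anti-stabilizing $X$ obeys $X\leq X'$ for all $X'\in\widehat{\mathbb{R}}_\leq\cap\widehat{\mathbb{P}}$, so $X$ is a lower bound; combined with $X\in\widehat{\mathbb{R}}_=\cap\widehat{\mathbb{P}}$ (Lemma~\ref{pro1111}) this makes $X$ the minimal element, i.e. $X=\mathbf{\widehat{X}_m}$, and symmetrically $X=\mathbf{\widehat{Y}_M}$ in the respective case.

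The step I expect to require the most care is the bookkeeping around Lemma~\ref{pro1111}: the sandwich of Lemma~\ref{uniq} pins $X$ against the extremal solution from one side only, and to close the argument one must independently certify that the candidate $X$ belongs to $\widehat{\mathbb{R}}_=\cap\widehat{\mathbb{P}}$ (resp.\ $\widehat{\mathbb{E}}$) so that the maximality (resp.\ minimality) of $\mathbf{\widehat{X}_M}$ (resp.\ $\mathbf{\widehat{X}_m}$) supplies the opposite inequality. The second delicate point, specific to part~2, is verifying the reciprocal spectral behaviour $\mu(\widehat{T}_{\mathbf{\widehat{X}_m}})\geq 1$ for the minimal solution emerging from the dual-DARE, which is where the involution structure of~\eqref{c2} must be exploited rather than the direct existence theorem.
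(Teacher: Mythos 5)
Your proposal is correct and follows essentially the same route as the paper's (very terse) proof: existence is obtained through Lemma~\ref{thm3p34} (with the dual-DARE identities \eqref{c1}--\eqref{c2} handling the anti-stabilizable case), and the identification with the extremal solution plus uniqueness comes from the sandwich inequalities of Lemma~\ref{uniq}. Your write-up in fact supplies the steps the paper leaves implicit — invoking Lemma~\ref{lem3p3} to secure property \eqref{P}, and Lemma~\ref{pro1111} to place any candidate almost (anti-)stabilizing solution inside $\widehat{\mathbb{P}}$ (resp.\ $\widehat{\mathbb{E}}$) so that maximality/minimality yields the reverse inequality — and you correctly read the hypothesis ``$\widehat{\mathbb{R}}_\leq\cap\widehat{\mathbb{P}}=\emptyset$'' as a typo for ``$\neq\emptyset$''.
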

\begin{proof}
Only the proof of Item 2 are given. The Item 1 can be easily proven by applying the same argument The existence of an almost anti-stabilizing solution $\mathbf{{X}_{a.a.s.}}=\mathbf{{X}_{m}}$ is guaranteed by Lemma~\ref{thm3p34} since $\widehat{\mathbb{R}}_\leq\cap\widehat{\mathbb{P}}=\emptyset$ and $\widehat{\mathbb{T}}\neq\emptyset$.
%Note that $$\mbox{rank}(\bb \widehat{T}_Y-\lambda I & \widehat{B}\eb)=\mbox{rank}(\bb \widehat{A}-\widehat{B}\widehat{F}_Y-\lambda I & \widehat{B}\eb)=\mbox{rank}(\bb \widehat{A}-\lambda I & \widehat{B}\eb)=n$$
%for all $\lambda\in\sigma(\widehat{T}_Y)\cap\mathbb{D}_=$.
The result is a consequence of Lemma~\ref{uniq}.
\end{proof}
%The main results are stated in Theorem
%whose proof is also very complicated and can be skipped on the first reading
%With the previous lemmas as preliminary, now the proof of Theorem 2.2 can be
%given.
Lemma~\ref{uniq2} together with Proposition~\ref{pro33} implies that following main theorem. We state the fourth main results but without proof.
\begin{Theorem} \label{uniq3}
\par\noindent
\begin{enumerate}
  \item Assume that the matrix pair $\mathbb{T}\neq\emptyset$.
\begin{enumerate}
\item
When ${\mathbb{R}}_\leq\cap{\mathbb{P}}\neq\emptyset$, $\mathbf{{X}_{a.s.}}$ exists and $\mathbf{{X}_{a.s.}}=\mathbf{{X}_M}$.
  \item
When ${\mathbb{R}}_\geq\cap{\mathbb{E}}\neq\emptyset$, $\mathbf{{X}_{a.s.}}$ exists and
$\mathbf{{X}_{a.s.}}=\mathbf{{Y}_m}$.
\end{enumerate}
\item
Assume that $\mathbb{O\neq\emptyset}$.
\begin{enumerate}
  \item When ${\mathbb{R}}_\leq\cap{\mathbb{P}}\neq\emptyset$, $\mathbf{{X}_{a.a.s.}}$ exists and $\mathbf{{X}_{a.a.s.}}=\mathbf{{X}_m}$.
  \item When ${\mathbb{R}}_\geq\cap{\mathbb{E}}\neq\emptyset$, $\mathbf{{X}_{a.a.s.}}$ exists and
$\mathbf{{X}_{a.a.s.}}=\mathbf{{Y}_M}$.
\end{enumerate}
\end{enumerate}
\end{Theorem}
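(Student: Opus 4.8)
The plan is to show, in each of the four cases, that the relevant extremal solution is itself an almost (anti-)stabilizing solution, and then to use a sandwiching argument to force every almost (anti-)stabilizing solution to coincide with it. I work under the standing assumptions that make the auxiliary equations available, namely $A$ nonsingular and $H,\overline{H}_A\in\mathrm{dom}(\mathcal{R})$. The existence half is immediate from the machinery already in place: under the hypotheses of cases~1(a) and~1(b), Theorem~\ref{mainthm} furnishes $\mathbf{X_M}$ and $\mathbf{Y_m}$, while Theorem~\ref{thm3p1} records $\rho(\widehat{T}_{\mathbf{X_M}})\le 1$ and $\rho(\widehat{T}_{\mathbf{Y_m}})\le 1$, so each is an almost stabilizing solution. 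Likewise, under case~2, Theorem~\ref{mainthm} furnishes $\mathbf{X_m}$ and $\mathbf{Y_M}$; to see that these are almost anti-stabilizing I would invoke the dual involution, writing $\mathbf{X_m}=-\mathbf{\widetilde{X}_M}$ (resp. $\mathbf{Y_M}=-\mathbf{\widetilde{Y}_m}$) and using the reciprocal relation $\widehat{T}_X\widehat{U}_Z=I$ from~\eqref{f1} at the dual point $Z=-\mathcal{R}(X)$. Since the dual extremal solutions satisfy $\rho(\widehat{U})\le 1$, this relation inverts the spectrum reciprocally and yields $\mu(\widehat{T}_{\mathbf{X_m}})\ge 1$ and $\mu(\widehat{T}_{\mathbf{Y_M}})\ge 1$.

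For the coincidence half I would pass to the transformed-DARE. Fix an arbitrary almost stabilizing solution $X\in\mathbb{R}_=$ with $\rho(\widehat{T}_X)\le 1$ (the anti-stabilizing case is identical with $\mu(\widehat{T}_X)\ge 1$). Because $\widehat{\mathcal{R}}=\mathcal{R}\circ\mathcal{R}$, we have $X\in\widehat{\mathbb{R}}_=\subseteq\widehat{\mathbb{R}}_\geq$, and part~1 of Proposition~\ref{pro6} gives $\widehat{T}_X^{(D)}=\widehat{T}_X$ on $\mathbb{R}_=$, so the spectral hypothesis transports verbatim to the transformed problem. The assumption $\mathbb{T}\neq\emptyset$ (resp. $\mathbb{O}\neq\emptyset$) yields $\widehat{\mathbb{T}}\neq\emptyset$ (resp. $\widehat{\mathbb{O}}\neq\emptyset$) by part~5 of Proposition~\ref{pro6}, whence Lemma~\ref{lem3p3} supplies the rank condition~\eqref{P} for $Y=X$. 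I would then apply the sandwiching Lemma~\ref{uniq} with $Y=X$: in the stabilizing case it gives $X\geq X'$ for every $X'\in\widehat{\mathbb{R}}_\leq\cap\widehat{\mathbb{P}}$ and $X\leq Z$ for every $Z\in\widehat{\mathbb{R}}_\leq\cap\widehat{\mathbb{E}}$, with the reversed inequalities in the anti-stabilizing case.

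Finally I would pin down the sign of $R_X$ and compare with the extremal solution. Since the pertinent solution set is nonempty, Theorem~\ref{1111} forces $\mathbb{R}_=\subseteq\mathbb{P}$ (resp. $\mathbb{R}_=\subseteq\mathbb{E}$), so $X$ lies in $\mathbb{R}_=\cap\mathbb{P}$ (resp. $\mathbb{R}_=\cap\mathbb{E}$), which by part~3 of Proposition~\ref{pro6} is contained in $\widehat{\mathbb{R}}_\leq\cap\widehat{\mathbb{P}}$ (resp. $\widehat{\mathbb{R}}_\leq\cap\widehat{\mathbb{E}}$). Feeding the extremal solution into the bound from the previous paragraph then gives, e.g. in case~1(a), $X\geq\mathbf{X_M}$, while maximality of $\mathbf{X_M}$ over $\mathbb{R}_=\cap\mathbb{P}$ gives $X\leq\mathbf{X_M}$; hence $X=\mathbf{X_M}$. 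The three remaining cases follow by the same two-sided squeeze, matching $\mathbf{Y_m}$, $\mathbf{X_m}$ and $\mathbf{Y_M}$ respectively, and this recovers the route indicated by Lemma~\ref{uniq2} together with Proposition~\ref{pro33}. The step I expect to be most delicate is the spectral bookkeeping for case~2: one must apply the reciprocal identity~\eqref{f1} at the correct dual point $Z=-\mathcal{R}(X)$ so that the stabilizing property of the dual extremal solution genuinely becomes $\mu(\widehat{T})\ge 1$ for the primal, and verify that the rank and definiteness hypotheses of Lemma~\ref{uniq} and Theorem~\ref{1111} survive the passage to the transformed and dual problems.
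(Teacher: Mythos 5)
Your proposal is correct and follows essentially the route the paper itself indicates: the paper states Theorem~\ref{uniq3} without a written proof, citing Lemma~\ref{uniq2} together with Proposition~\ref{pro33}, and your argument is a faithful unfolding of exactly that machinery --- existence and the bound $\rho(\widehat{T})\leq 1$ from Theorems~\ref{thm3p1} and~\ref{mainthm}, the anti-stabilizing bound $\mu(\widehat{T})\geq 1$ via the dual involution and the reciprocity \eqref{f1} at $Z=-\mathcal{R}(X)=-X$ (which is how Proposition~\ref{pro33} operates), and coincidence via the sandwich Lemma~\ref{uniq} with the rank condition supplied by Lemma~\ref{lem3p3} and the inclusions of Proposition~\ref{pro6}. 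Your only deviation is harmless and arguably clarifying: instead of quoting Lemma~\ref{uniq2} (whose printed emptiness hypotheses $\widehat{\mathbb{R}}_\leq\cap\widehat{\mathbb{P}}=\emptyset$ conflict with the existence of $\mathbf{\widehat{X}_M}$ and must be read with the cases exchanged), you combine Lemma~\ref{uniq} directly with Theorem~\ref{1111} to sort $\mathbb{R}_=$ into $\mathbb{P}$ or $\mathbb{E}$ before squeezing against $\mathbf{X_M}$, $\mathbf{Y_m}$, $\mathbf{X_m}$, $\mathbf{Y_M}$, and you make explicit the standing assumptions ($A$ nonsingular, $H,\overline{H}_A\in\mathrm{dom}(\mathcal{R})$) that the paper's own route through Proposition~\ref{pro33} also requires but leaves tacit in the theorem statement.
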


\section{Concluding remarks} \label{conclusion}
In this paper we investigate eight extremal solutions and almost (anti-)stabilizing solution of a class of conjugate discrete-time Riccati equations, arising originally from the LQR control problem for discrete-time antilinear systems. Moreover, it is proved that the existence of $(\mathbf{X_M},\mathbf{X_m})$ and $(\mathbf{Y_M},\mathbf{Y_m})$ are mutually exclusive. We believe that our theoretical results would be useful in the LQR control problem, or even the state-feedback stabilization problem, for discrete-time antilinear systems. It is not only to be studied that the existence of eight extremal solutions of the CDARE \eqref{cdare} to be investigated in this work, and
{ it also leads to our future work that how to apply the accelerated techniques presented in the series works \cite{l.c18,l.c20}
for computing eight extremal solutions to the CDARE \eqref{cdare}, simultaneously. The dual-CDARE~\eqref{dual} with singular matrix $A$ also will be studied for published elsewhere.}
\section*{Acknowledgment}
This research work is partially supported by the National Science and Technology Council and the National Center for Theoretical Sciences in Taiwan.
%The first author (Hung-Yuan Fan) would like to thank the support from the Ministry of Science and Technology of Taiwan
%under grants MOST 110-2115-M-003-016, and
%The corresponding author (Chun-Yueh Chiang)
The author would like to thank the support from the National Science and Technology Council of Taiwan under the grant MOST 113-2115-M-150-001-MY2.
\section*{Declaration of competing interest}
The author declare that he has no known competing financial interests or personal relationships that could have appeared
to influence the work reported in this paper.
\bibliographystyle{abbrv}
%\bibliography{cdare}

\end{document}